\documentclass{article}
\usepackage{etex}
\usepackage{fullpage}
\usepackage{setspace}
\usepackage{comment}
\usepackage[pdftex]{graphicx}
\usepackage{rotating}
\usepackage{amsmath}
\usepackage{mdframed}
\usepackage{amsthm}
\usepackage{amssymb}
\usepackage{graphicx}
\usepackage{tikz}
\usepackage{rotating}
\usepackage{mathtools}
\usepackage{bbm}
\usepackage{amsfonts}
\usepackage{setspace}
\usepackage{color}
\usepackage{booktabs}
\usepackage{caption}
\theoremstyle{definition}
\newtheorem{definition}{Definition}[section]
\newtheorem{proposition}{Proposition}[section]
\newtheorem{theorem}{Theorem}[section]
\newtheorem{lemma}{Lemma}[section]
\newtheorem{remark}{Remark}[section]
\usepackage{amssymb}
\usepackage[linesnumbered, ruled]{algorithm2e}
\SetKwRepeat{Do}{do}{while}%

\usepackage{tikz}
\usepackage{pgf}
\usepackage{pgfplots}
\pgfplotsset{compat=1.17}
\usepackage{textpos}
\usepackage{graphicx}
\usepackage{pstricks}
\usepackage{framed}
\usepackage{varwidth}
\usepackage[normalem]{ulem}
\usepackage{xstring}
\usepackage{enumitem}
\usepackage{todonotes}
\usetikzlibrary{shapes}
\usetikzlibrary{arrows,decorations.pathmorphing,backgrounds,fit,
positioning,shapes.symbols,chains}






\DeclareMathOperator*{\argmax}{arg\,max}
\DeclareMathOperator*{\argmin}{arg\,min}





%


%


%
\newcommand{\OMIT}[1]{\relax}   
%


%

%
%


%

\def\text{{\rm}}


%







 \newcommand{\bma}[1]{\mbox{\boldmath $#1$}}


 \newcommand{\bs}{ {\bma{s}} }

 \newcommand{\ttheta}{ {\widetilde{\theta}} }

 \newcommand{\calX}{\mathcal{X}}
 \newcommand{\calV}{\mathcal{V}}

\newcommand{\calW}{\mathcal{W}}
\newcommand{\calZ}{\mathcal{Z}}
\newcommand{\OLS}{\mathrm{OLS}}
\newcommand{\vecto}{\mathrm{vec}}


\theoremstyle{definition}

\usepackage[utf8]{inputenc} 
\usepackage[T1]{fontenc}    
\usepackage{hyperref}       
\usepackage{url}            
\usepackage{booktabs}       
\usepackage{amsfonts}       
\usepackage{nicefrac}       
\usepackage{microtype}      
\usepackage{xcolor}         
\usepackage{graphicx}
\usepackage[numbers]{natbib}
\usepackage{subfig}

\title{Experimental Designs for Heteroskedastic Variance}

\author{
Justin Weltz\thanks{Department of Statistical Science, Duke University, justin.weltz@duke.edu} \ \
  Tanner Fiez\thanks{Amazon.com, USA, fieztann@amazon.com}  \ \
  Eric Laber\thanks{Department of Statistical Science, Duke University, eric.laber@duke.edu} \ \
  Alexander Volfovsky\thanks{Department of Statistical Science, Duke University, alexander.volfovsky@duke.edu} \\
  Blake Mason\thanks{Amazon.com, USA, bjmason@amazon.com} \ \
  Houssam Nassif\thanks{Meta, USA, houssamn@meta.com} \ \
  Lalit Jain\thanks{Michael G. Foster School of Business, University of Washington, lalitj@uw.edu, work conducted at Amazon}
}

\begin{document}

\maketitle

\begin{abstract}
Most linear experimental design problems assume homogeneous variance although heteroskedastic noise is present in many realistic settings. 
Let a learner have access to a finite set of measurement vectors $\mathcal{X}\subset \mathbb{R}^d$ that can be probed to receive noisy linear responses of the form $y=x^{\top}\theta^{\ast}+\eta$. 
Here $\theta^{\ast}\in \mathbb{R}^d$ is an unknown parameter vector, and $\eta$ is independent mean-zero $\sigma_x^2$-sub-Gaussian noise defined by a flexible heteroskedastic variance model, $\sigma_x^2 = x^{\top}\Sigma^{\ast}x$. Assuming that $\Sigma^{\ast}\in \mathbb{R}^{d\times d}$ is an unknown matrix, we propose, analyze and empirically evaluate a novel design for uniformly bounding estimation error of the variance parameters, $\sigma_x^2$. 
We demonstrate the benefits of this method with two adaptive experimental design problems under heteroskedastic noise, fixed confidence transductive best-arm identification and level-set identification and prove the first instance-dependent lower bounds in these settings.
Lastly, we construct near-optimal algorithms and demonstrate the large improvements in sample complexity gained from accounting for heteroskedastic variance in these designs empirically.
\end{abstract}


\addtocontents{toc}{\protect\setcounter{tocdepth}{0}}

\section{Introduction}

Accurate data analysis pipelines require decomposing observed data into signal and noise components.
To facilitate this, the noise is commonly assumed to be additive and homogeneous. 
However, in applied fields this assumption is often untenable, necessitating the development of novel design and analysis tools. 
Specific examples 
of deviations from homogeneity 
include the variability in sales figures of different products in e-commerce~\cite{kaptein2015advancing} and the volatility of stocks across sectors~\cite{blitz2007volatility}. This heteroskedasticity is frequently structured as a function of observable features. In this work, we develop two near-optimal adaptive experimental designs that take advantage of such structure.

There are many approaches, primarily developed in the statistics and econometrics literature, for efficient mean estimation in the presence of heteroskedastic noise (see \cite{wooldridge2010econometric, greene2003econometric} for a survey). These methods concentrate on settings where data have already been collected and do not consider sampling to better understand the underlying heteroskedasticity. Some early work exists on experimental design in the presence of heteroskedastic noise, however these methods are non-adaptive (see Section~\ref{sec:related_work} for details).  
Importantly, \emph{using naive data collection procedures designed for homoskedastic environments may lead to sub-optimal inference downstream} (see Section~\ref{subsec:homo_vs_hetero} for examples). A scientist may fail to take samples that allow them to observe the effect of interest with a fixed sampling budget, thus wasting time and money.
This work demonstrates how to efficiently and adaptively collect data in the presence of heteroskedastic noise and  bridges the gap between heteroskedasticity-robust, but potentially inefficient, passive data collection techniques and powerful, but at times brittle, active algorithms that largely ignore the presence of differing variances. 

\subsection{Problem Setting}
\label{sec:intro_prob_setting}
To ground our discussion of heteroskedasticity in active data collection, we focus on adaptive experimentation with covariates; also known as pure-exploration linear bandits. Let a learner be given a finite set of measurement vectors $\mathcal{X}\subset \mathbb{R}^d$, $\mathrm{span}(\mathcal{X})=\mathbb{R}^d$, which can be probed to receive linear responses  with additive Gaussian noise of the form \cite{chaudhuri2017active, soare2013active}
\begin{align}
y=x^{\top}\theta^{\ast}+\eta \mathrm{\quad  with\quad  } \eta\overset{\small}{\sim} \mathcal{N}(0, \sigma_x^2), \quad \sigma_x^2:=x^{\top}\Sigma^{\ast}x. 
\label{eq:main_linear_model}
\end{align}
Here, $\theta^{\ast}\in \mathbb{R}^d$ is an unknown parameter vector, $\eta$ is independent mean-zero Gaussian noise with variance $\sigma_x^2 = x^{\top}\Sigma^{\ast}x$, 
and $\Sigma^{\ast}\in \mathbb{R}^{d\times d}$ is an unknown positive semi-definite matrix with the assumption that  $\sigma_{\min}^2 \leq \sigma_x^2 \leq \sigma_{\max}^2$
for all $x\in \calX$, where $\sigma_{\max}^2,\sigma_{\min}^2>0$ are known. We focus on Gaussian error for ease of exposition but extend to strictly sub-Gaussian noise in the Appendix.
The linear model in Eq. \eqref{eq:main_linear_model} with unknown noise parameter $\Sigma^{\ast}$
directly generalizes the standard multi-armed bandit to the case where the arms have different variances~\cite{antos2010active}. Throughout, $\mathcal{Z}\subset\mathbb{R}^d$ will be a set (that need not be equal to $\mathcal{X}$) over which our adaptive designs will be evaluated.
 While the theory holds true for $\sigma_{\min}^2 \leq \sigma_x^2 \leq \sigma_{\max}^2$ in generality, we consider the case where $\sigma_{\max}^2 = \max_{x\in \calX} \sigma_{x}^2$ and $\sigma_{\min}^2 = \min_{x \in \calX} \sigma_{x}^2$ so that $\kappa := \sigma_{\max}^2/\sigma_{\min}^2$ summarizes the level of heteroskedasticity.

\subsection{Heteroskedasticity Changes Optimal Adaptive Experimentation}\label{subsec:homo_vs_hetero}
To underscore the importance of adapting to heteroskedasticity, we consider a baseline best arm identification algorithm designed for homoskedastic settings. This algorithm does not account for differing variances, so it is necessary to upper bound $\sigma_x^2 \leq \sigma_{\max}^2$ for each $x$ to identify the best arm with probability $1-\delta$ for $\delta \in (0,1)$.
In Fig.~\ref{fig:intro_fig}, we compare this approach to the optimal sampling allocation accounting for heteroskedasticity in a setting that is 
a twist on a standard benchmark~\cite{soare2014best} for best-arm identification algorithms. 
Let $\mathcal{X} = \mathcal{Z} \subset \mathbb{R}^2$ and $|\mathcal{X}| = 3$. We take $x_1 = e_1$ and $x_2 = e_2$ to be the first and second standard bases and set $x_3 = [\cos(0.5), \sin(0.5)]$.  Furthermore, we take $\theta^\ast = e_1$ such that $x_1$ has the highest reward and $x_3$ has the second highest. In the homoskedastic case, an optimal algorithm will focus on sampling $x_2$ because it is highly informative for distinguishing between $x_1$ and $x_3$ as shown in Fig~\ref{fig:soare_homosked}.
However, if the errors are heteroskedastic, such a sampling strategy may not be optimal. If $\sigma_2 \gg \sigma_1, \sigma_3$,
the optimal allocation for \textit{heteroskedastic} noise focuses nearly all samples on $x_1$ and $x_3$  (as shown in Fig.~\ref{fig:soare_heterosked}), which are more informative than $x_2$ because they have less noise.
Hence, ignoring heteroskedasticity and upper-bounding the variances
leads to a suboptimal sampling allocation and inefficient performance. 
This effect worsens as the amount of heteroskedasticity increases, leading to an additional multiplicative factor of $\kappa$ (cf., \cite{fiez2019sequential}, Thm. 1)
as demonstrated in Fig.~\ref{fig:soare_kappa}, 
We see that the sample complexity of algorithms that ignore heteroskedasticity suffer a linear dependence on $\kappa$, while a method that adapts to heteroskedasticity in the data does not. 

\begin{figure}[t]
    \label{fig:IntroExamp}
  \centering

  \subfloat[Heteroskedastic design]{\label{fig:soare_heterosked}
\includegraphics[width=0.33\textwidth]{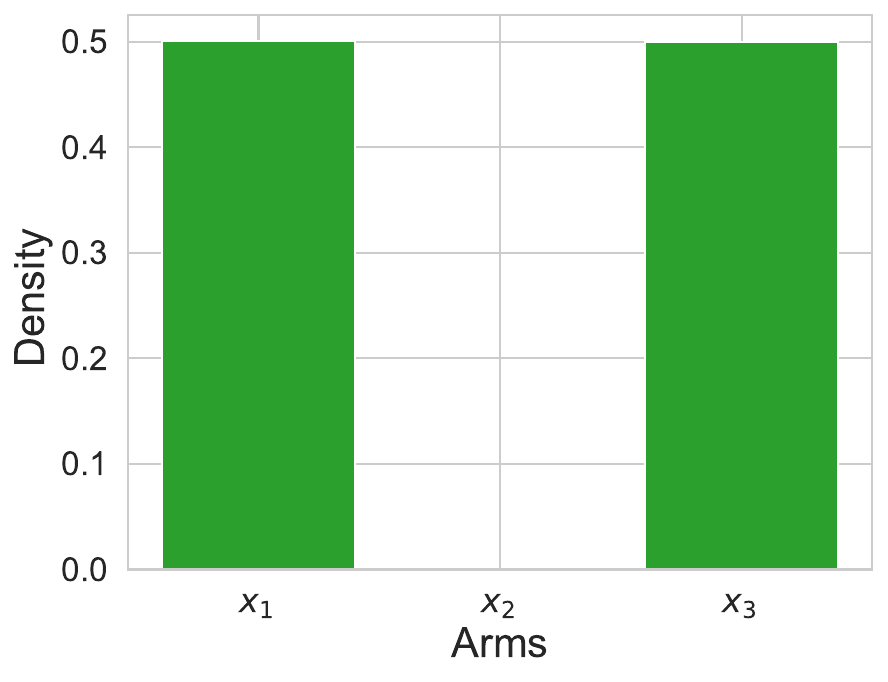}}
  \subfloat[Homoskedastic design]{\label{fig:soare_homosked}
\includegraphics[width=0.33\textwidth]{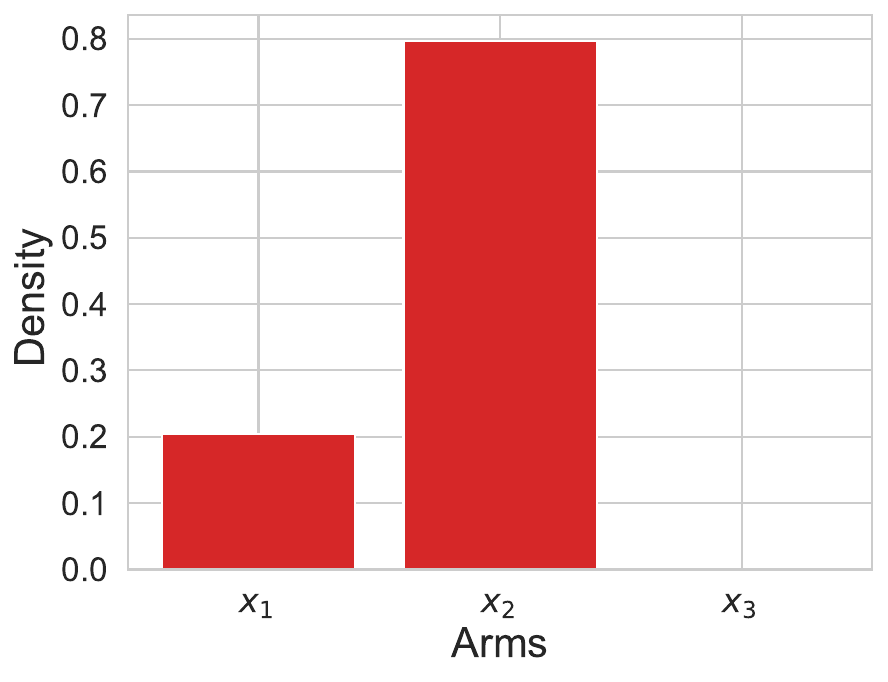}}
\subfloat[Sample complexity as $\kappa$ increases]{\label{fig:soare_kappa}
    \includegraphics[width=0.33\textwidth]{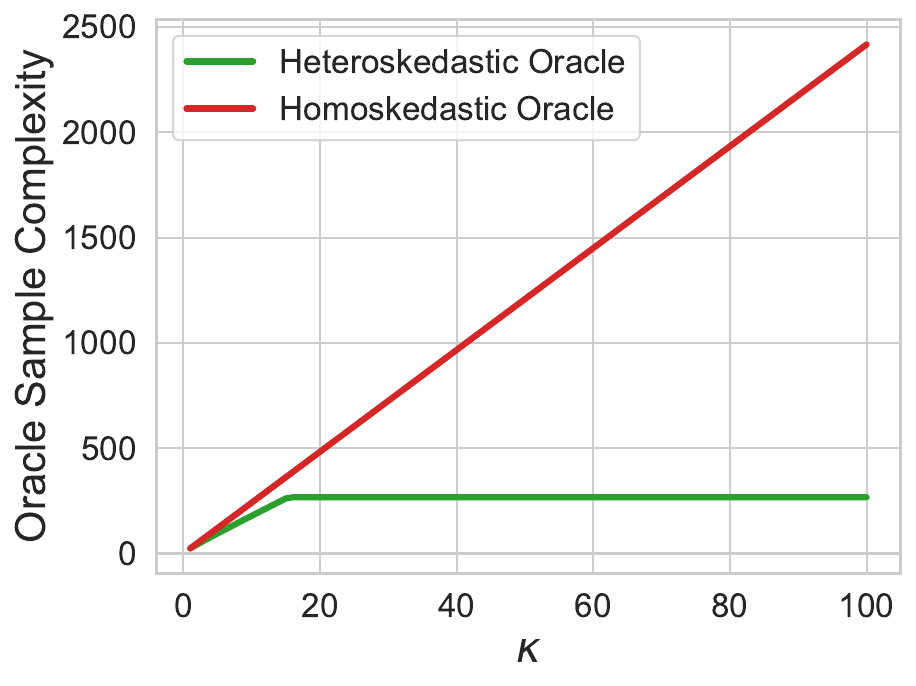}}
      \caption{\small We compare the sampling strategies of algorithms that do and do not account for heteroskedastic variance in Fig.~\ref{fig:soare_heterosked} and Fig.~\ref{fig:soare_homosked} respectively when $\kappa = 20 $.  Figure \ref{fig:soare_kappa} shows that the gap in the optimal sample complexity between the two algorithms becomes linear as $\kappa$ increases.}
  \label{fig:intro_fig}
\end{figure}

\subsection{Paper Contributions}
\begin{enumerate}[leftmargin=.5cm]
\item \textbf{Finite Sample Noise Estimation Guarantees.} We use an experimental design technique to construct estimates of the heteroskedastic noise variances
with bounded maximum absolute error.
\item \textbf{Near-Optimal Experimental Design Algorithms.} We propose an adaptive experimental design algorithm for transductive linear best-arm and level-set identification with heteroskedastic noise that combines our variance estimation method 
with optimal experimental design techniques. We prove sample complexity upper bounds for unknown $\Sigma^\ast$ that nearly match instance dependent lower bounds when $\Sigma^\ast$ is known. 
Importantly, we show that in contrast to naive methods which have a multiplicative dependence on $\kappa$, we only suffer an \textit{additive} dependence. 
Lastly, we show that the theoretical improvements translate to stronger empirical performance. 
\end{enumerate}

\section{Related Work}\label{sec:related_work}

\textbf{Experimental Design and Active Learning for Heteroskedastic Regression.} Many experimental design methods for heteroskedastic regression assume known weights (or efficiency functions)~\cite{wong2008optimum, wong1993heteroscedastic}. Procedures that estimate the heteroskedastic variance focus on optimizing the design for the weighted least squares estimator given a burn-in period used to estimate the noise parameters~\cite{mays1997design, wiens2014v, wiens2013designs}. These contributions assume that the variance is unstructured, which leads to sample complexities that scale with the number of arms.
Similarly, past active learning work on linear bandits with heteroskedastic noise either assume the variances are known \cite{kirschner2018information, zhao2022bandit}, or that the variances are unstructured~\cite{fontaine2021online, cowan2015normal,antos2010active}. 
Finally, the works that assume structured heteroskedastic noise in a linear bandit framework focus on active maximum likelihood estimation and G-optimal design~\cite{chaudhuri2017active, soare2013active}, but do not exploit the structure of the heteroskedastic noise to improve performance, leading to complexities scaling poorly in the problem dimension.

\textbf{Best-Arm and Level Set Identification in Linear and Logistic Bandits.}
Best-arm identification is a fundamental problem in the bandit literature \cite{soare2014best, hoffman2014correlation}. \citet{soare2015sequential} constructed the first passive and adaptive algorithms for pure-exploration in linear bandits using  G-optimal and $\calX \mathcal{Y}$-allocation experimental designs. \citet{fiez2019sequential} built on these results, developing the first pure exploration algorithm (\texttt{RAGE}) for the transductive linear bandit setting with 
nearly matching upper and lower bounds.
Recent work specializes these methods to different settings \cite{tao2018best, xu2018fully, karnin2016verification, katz2020empirical}, improves time complexity \cite{zaki2019towards, wang2021fast}, and targets different (minimax, asymptotically optimal, etc.) lower bound guarantees \cite{yang2022minimax, jedra2020optimal, degenne2020gamification}. In the level set literature, \citet{mason2022nearly} provides the first instance optimal algorithm with matching upper and lower bounds. 
Critically, none of these contributions account for heteroskedastic variance. 
Finally, we note the connection between this work and the logistic bandits literature~\cite{jun2021improved, faury2020improved, abeille2021instance, mason2022experimental}. In that setting, the observations are Bernoulli random variables with a probability given by $\mathbb{P}(y_i = 1 | x_i) = (1 + \exp(-x_i^\top \theta^\ast))^{-1}$. 
Because of the mean-variance relationship for Bernoulli random variables, points $x$ for which $\mathbb{P}(y = 1 | x)$ is near $0$ or $1$ have lower variance than other $x$'s. 
While the tools are different in these two cases, we highlight several common ideas: First, \citet{mason2022experimental} present a method that explicitly designs to handle the heteroskedasticity in logistic bandits. Second, a core focus of optimal logistic bandit papers is reducing the dependence on the worst case variance to only an additive penalty, similar to the guarantee of $\kappa$ herein, which has been shown to be unavoidable in general~\cite{jun2021improved}.

\section{Heteroskedastic Variance Estimation}
\label{sec:var_linear_estimator}

Given a sampling budget $\Gamma$, our goal is to choose $\{x_i\}_{i=1}^{\Gamma} \subset \calX$ to 
control the tail of the \emph{maximum absolute error (MAE):} $\max_{x\in \calX} | \widehat{\sigma}_x^2 - \sigma_x^2 |$ by exploiting the heteroskedastic variance model. 
Alg.~\ref{alg_var:HetEstLin}, \texttt{HEAD} (Heteroskedasticity Estimation by Adaptive Designs) provides an experimental design for finite-time bounds on structured heteroskedastic variance estimation.

\textbf{Notation.} 
Let $M:=d(d+1)/2$. Define $\mathcal{W} = \{ \phi_x, \ \forall \ x \in \calX \} \subset \mathbb{R}^M$ where $\phi_x=\mathrm{vec}(xx^{\top})G$ and $G \in \mathbb{R}^{d^2 \times M}$ is the duplication matrix such that $G \mathrm{vech}(xx^{\top}) = \mathrm{vec}(xx^{\top})$ with $\mathrm{vech}(\cdot)$ denoting the half-vectorization. For a set $\mathcal{V}$, let $P_{\mathcal{V}}:=\{\lambda \in \mathbb{R}^{|\mathcal{V}|}: \sum_{v\in \mathcal{V}}\lambda_v =1, \lambda_v\geq 0 \ \forall \ v\}$ denote distributions over $\mathcal{V}$ and $\mathcal{Y}(\mathcal{V}) := \{ v' - v : v, v' \in \mathcal{V}, \ v \neq v' \}$ as the differences between members of set $\mathcal{V}$.

\textbf{General Approach.} 
Given observations from $y_t=x_t^{\top}\theta^{\ast}+\eta_t$ and the true value of $\theta^{\ast}$ we can estimate $\sigma^2_x$ for each $x\in\calX$ from the squared residuals $\eta_t^2 = (y_t - x_t^{\top}\theta^{\ast})^2$ (e.g., using method of moments or another estimating equation approach \cite{godambe1978some}).
Because $\theta^{\ast}$ is not observed,
consider a general procedure that first uses $\Gamma/2$ samples from the budget to construct a least-squares estimator, $\widehat{\theta}_{\Gamma/2}$, of $\theta^\ast$, and then uses the final $\Gamma/2$ samples to collect error estimates of the form $\widehat{\eta}_t^2 = (y_t - x_t^\top \widehat{\theta}_{\Gamma/2})^2$. 
Define  $\Phi_{\Gamma} \in \mathbb{R}^{\Gamma/2 \times M}$ as a matrix with rows 
$\{\phi_{x_t}\}_{t=\Gamma/2+1}^{\Gamma}$ and $\widehat{\eta}\in \mathbb{R}^{\Gamma/2}$ as a vectors with values $\{\widehat{\eta}_t\}_{t=\Gamma/2+1}^{\Gamma}$. 
With $\widehat{\eta}$, we can estimate $\Sigma^\ast$ by least squares as $\mathrm{vech}(\widehat{\Sigma}_{\Gamma})=\argmin_{\mathbf{s}\in \mathbb{R}^M} \|\Phi_{\Gamma}\mathbf{s}-\widehat{\eta}^2\|^2$, where we minimize over the space of symmetric matrices.
Splitting samples ensures that the errors in estimating $\theta^\ast$ and $\Sigma^\ast$ are independent. Note that $(\widehat{\Sigma}_{\Gamma}, \widehat{\theta}_{\Gamma/2})$ is an M-estimator of $(\Sigma^*, \theta^*)$  derived from unbiased estimating functions, a common approach in heteroskedastic regression~\cite{huber1967under, white1980heteroskedasticity, mackinnon1985some}. Finally, by plugging in the estimate of $\Sigma^\ast$, one can obtain an estimate of the variance,  $\widehat{\sigma}_x^2 = \min  \{ \max\{x^\top \widehat{\Sigma_\Gamma} x, \sigma_{\min}^2 \}, \sigma_{\max}^2  \}$.
We show that this approach leads to the following general error decomposition for any $x\in \mathcal{X}$ with $z := \phi_{x_t}^{\top}(\Phi_{\Gamma}^{\top} \Phi_{\Gamma})^{-1}\Phi_{\Gamma}^{\top}$:
\begin{equation*}
\label{eq:decomp}
|x^{\top}(\Sigma^\ast - \widehat{\Sigma}_{\Gamma})x | \le \underbrace{[x^{\top}(\widehat {\theta}_{\Gamma/2} - \theta^*)]^2}_{\mathrm{\mathbf{A}}} +  \underbrace{ \Big | \sum_{t=\Gamma/2}^{\Gamma} z_t \left ( \eta_t^2 - x_t^{\top}\Sigma^*x_t \right ) \Big |}_{\mathrm{\mathbf{B}}}  + \underbrace{ \Big|\sum_{t=\Gamma/2}^{\Gamma}  2 z_t x_t^{\top}(\widehat {\theta}_{\Gamma/2} - \theta^*)\eta_t \Big|}_{\mathrm{\mathbf{C}}} .
\end{equation*}

\textbf{Experimental Design for Precise Estimation.}
The analysis above gives an error decomposition for any phased non-adaptive data collection method. 
Intuitively, we wish to sample $\{x_1, \ldots, x_{\Gamma/2}\}$ and $\{x_{\Gamma/2 + 1}, \ldots, x_\Gamma\}$ to control the maximum absolute error by minimizing the expression above. We achieve this via two phases of G-optimal experimental design~\cite{kiefer1959optimum}-- first over the $\mathcal{X}$ space to estimate $\theta^\ast$ and then over the $\mathcal{W}$ space to estimate $\Sigma^\ast$.
Precisely, we control quantity $\mathbf{A}$ via stage 1 of Alg.~\ref{alg_var:HetEstLin}, which draws from a G-optimal design $\lambda^*$ to minimize the maximum predictive variance $\max_{x \in \calX}\mathbb{E}[(x^{\top}(\widehat {\theta}_{\Gamma/2}-\theta^{\ast}))^2]$. 
Quantity $\mathbf{B}$ is another sub-exponential quantity, which we control by drawing from a G-optimal design $\lambda^{\dagger}$ over $\mathcal{W}$ to minimize  $\max_{\phi \in \mathcal{W}} \phi^{\top}(\Phi_{\Gamma}^{\top}\Phi_{\Gamma})^{-1}\phi$.
Finally, $\mathbf{C}$ is controlled by a combination of the guarantees associated with $\lambda^{\ast}$ and $\lambda^{\dagger}$. 
This leads to the following error bounds on $\widehat{\sigma}_x^2 = \min  \{ \max\{x^\top \widehat{\Sigma_\Gamma} x, \sigma_{\min}^2 \}, \sigma_{\max}^2  \}$.
\begin{theorem}\label{ConcBoundOLS}
Assume $ \Gamma = \Omega(\max\{\sigma_{\max}^2\log(|\calX|/\delta) d^2, d^2\})$. 
For any $x \in \calX$ and $\delta \in (0, 1)$, Alg.~\ref{alg_var:HetEstLin} guarantees the following:
\begin{equation*}
    \mathbb{P} \left ( |\widehat {\sigma}_x^2 - \sigma_x^2| \leq  C_{\Gamma, \delta}\right ) \geq 1-\delta/2\quad \mathrm{where}\quad C_{\Gamma,\delta} = \mathcal{O}(\sqrt{\log(|\calX|/\delta) \sigma_{\max}^2d^2)/\Gamma}).
\end{equation*}

\end{theorem}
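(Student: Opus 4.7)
The plan is to bound each of the three terms $\mathbf{A}$, $\mathbf{B}$, $\mathbf{C}$ in the error decomposition stated just above the theorem, using the G-optimality of the two design measures $\lambda^{\ast}$ and $\lambda^{\dagger}$ that Alg.~\ref{alg_var:HetEstLin} samples from. A useful first reduction is that $\widehat{\sigma}_x^2 = \min\{\max\{x^\top \widehat{\Sigma}_{\Gamma} x, \sigma_{\min}^2\}, \sigma_{\max}^2\}$ is the projection of $x^\top \widehat{\Sigma}_{\Gamma} x$ onto an interval containing the true $\sigma_x^2$, so $|\widehat{\sigma}_x^2 - \sigma_x^2| \le |x^\top \widehat{\Sigma}_{\Gamma} x - \sigma_x^2|$, and it suffices to bound the right-hand side uniformly over $x \in \calX$ on an event of probability at least $1-\delta/2$.

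For $\mathbf{A}=[x^\top(\widehat{\theta}_{\Gamma/2}-\theta^{\ast})]^2$ the analysis is purely first-phase: writing $X_1$ for the phase-one design matrix and $\eta_{[1]}$ for its noise vector, $x^\top(\widehat{\theta}_{\Gamma/2}-\theta^{\ast}) = x^\top (X_1^\top X_1)^{-1} X_1^\top \eta_{[1]}$ is Gaussian with variance at most $\sigma_{\max}^2 \cdot x^\top (X_1^\top X_1)^{-1}x$. The Kiefer--Wolfowitz equivalence theorem applied to the G-optimal design $\lambda^{\ast}$ on $\calX$, combined with an efficient rounding procedure, gives $\max_{x \in \calX} x^\top (X_1^\top X_1)^{-1} x \lesssim d/\Gamma$, so a Gaussian tail bound with a union over $\calX$ yields $\mathbf{A} \lesssim \sigma_{\max}^2 d \log(|\calX|/\delta)/\Gamma$ on an event $\mathcal{E}_1$ of probability at least $1-\delta/6$. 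Under the burn-in assumption on $\Gamma$, this quadratic bound is of lower order than the target $C_{\Gamma,\delta}$.

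Conditional on $\mathcal{E}_1$ and on the second-phase design, the weights $z_t = \phi_x^\top(\Phi_\Gamma^\top \Phi_\Gamma)^{-1} \phi_{x_t}$ satisfy $\sum_t z_t^2 = \phi_x^\top(\Phi_\Gamma^\top \Phi_\Gamma)^{-1} \phi_x \lesssim M/\Gamma = d^2/\Gamma$ uniformly in $x\in\calX$, again by Kiefer--Wolfowitz but applied to $\lambda^{\dagger}$ on $\mathcal{W}$. The summands in $\mathbf{B}$ are centered, independent, and sub-exponential with parameter of order $\sigma_{\max}^2$, so Bernstein's inequality together with a union bound over $\calX$ yields $\mathbf{B} \lesssim \sqrt{\sigma_{\max}^2 d^2 \log(|\calX|/\delta)/\Gamma}$ provided $\Gamma$ exceeds the Bernstein transition threshold, which is precisely the burn-in condition assumed in the theorem. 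For $\mathbf{C}$, I further condition on $\widehat{\theta}_{\Gamma/2}$ and treat the phase-two noise as the only remaining randomness; then $\mathbf{C}$ is a weighted sum of independent sub-Gaussians with coefficients $2 z_t x_t^\top (\widehat{\theta}_{\Gamma/2}-\theta^{\ast})$, and its conditional variance is at most $4\sigma_{\max}^2 \cdot \max_{t}|x_t^\top(\widehat{\theta}_{\Gamma/2}-\theta^{\ast})|^2 \cdot \sum_t z_t^2$. On $\mathcal{E}_1$ this is $\lesssim \sigma_{\max}^2 \cdot \bigl(\sigma_{\max}^2 d \log(|\calX|/\delta)/\Gamma\bigr) \cdot \bigl(d^2/\Gamma\bigr)$, and the burn-in absorbs the resulting Gaussian tail into the $\mathbf{B}$ rate.

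The main obstacle is the cross term $\mathbf{C}$: it couples the two phases, and the conditioning argument must simultaneously preserve the independence between $\widehat{\theta}_{\Gamma/2}$ and the phase-two noise (this is guaranteed by the sample split) and retain the G-optimal control on the phase-two weight vector $z$. Once this is done, a union bound over the three sub-events, together with the sample-size hypothesis $\Gamma = \Omega(\max\{\sigma_{\max}^2 \log(|\calX|/\delta) d^2, d^2\})$ to absorb the lower-order $\mathbf{A}$ and $\mathbf{C}$ contributions into the $\mathbf{B}$ rate, delivers the announced concentration at level $1-\delta/2$.
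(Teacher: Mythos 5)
Your proposal is correct and follows the paper's own route almost exactly: the same three-term decomposition, the same sub-Gaussian tail plus Kiefer--Wolfowitz/rounding bound $\max_{x\in\calX}x^{\top}(X_1^{\top}X_1)^{-1}x\lesssim d/\Gamma$ for $\mathbf{A}$, and the same sub-exponential/Bernstein treatment of $\mathbf{B}$ using $\sum_t z_t^2=\phi_x^{\top}(\Phi_\Gamma^{\top}\Phi_\Gamma)^{-1}\phi_x\lesssim d^2/\Gamma$ from the G-optimal design over $\mathcal{W}$. The one place you genuinely diverge is the cross term $\mathbf{C}$: the paper splits it by Cauchy--Schwarz into $\|(\eta\circ z)^{\top}X(V^{\top}V)^{-1/2}\|_2\cdot\|(V^{\top}V)^{1/2}(\widehat{\theta}_{\Gamma/2}-\theta^{\ast})\|_2$, controls the first factor with a Hanson--Wright-type quadratic-form concentration (via a trace bound of order $d^3/\Gamma^2$) and the second with a self-normalized least-squares bound of order $\sqrt{\sigma_{\max}^2(d+\log(1/\delta))}$; you instead condition on the phase-one data (legitimate, since the sample split makes $\widehat{\theta}_{\Gamma/2}$ independent of the phase-two noise) and treat $\mathbf{C}$ as a linear form in the phase-two noise with frozen coefficients, bounding its conditional variance by $4\sigma_{\max}^2\max_t[x_t^{\top}(\widehat{\theta}_{\Gamma/2}-\theta^{\ast})]^2\sum_t z_t^2$ and reusing the event from $\mathbf{A}$. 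Your version is more elementary (it needs no quadratic-form concentration and no self-normalized bound), at the cost of a union bound over $\calX$ where the paper pays $d+\log(1/\delta)$; both give a $\mathbf{C}$ contribution of order $\sigma_{\max}^2\,\mathrm{poly}(d)\,\mathrm{polylog}/\Gamma$ that the stated burn-in $\Gamma=\Omega(\max\{\sigma_{\max}^2\log(|\calX|/\delta)d^2,d^2\})$ absorbs into the $\mathbf{B}$ rate, so the conclusion is unaffected.
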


The proof of Theorem~\ref{ConcBoundOLS} is in App.~\ref{app_sec:VarEst} and follows from the decomposition in Eq.~\ref{eq:decomp}; treating $\mathbf{A}$, $\mathbf{B}$ and $\mathbf{C}$ as sub-exponential random variables with bounds that leverage the experimental designs of Alg.~\ref{alg_var:HetEstLin}. The details on the constants involved in Theorem~\ref{ConcBoundOLS} are also included in App.~\ref{app_sec:VarEst}. Note that we design to directly control the MAE of $\widehat{\sigma}_x^2$. This is in contrast to an inefficient alternative procedure that allocates samples to minimize the error of $\widehat{\Sigma}_{\Gamma}$ in general and then extends this bound to $x^\top\widehat{\Sigma}_{\Gamma}x$ for $x \in \calX$. 

\textbf{Theoretical Comparison of Variance Estimation Methods.}
In contrast to previous approaches that naively scale in the size of $\mathcal{X}$ \cite{fontaine2021online}, the above result has a dimension dependence of $d^2$, which intuitively scales with the degrees of freedom of $\Sigma^\ast$. 
To highlight the tightness of this result, consider estimating the noise parameters for each arm, $x \in \calX$, only using the samples of $x$ (like the method in \cite{fontaine2021online}). We improve this approach by adapting it to our heteroskedastic variance structure and strategically pick $d^2$ points to avoid the dependence on $|\mathcal{X}|$. We refer to this method as the \emph{Separate Arm Estimator} and
in App.~\ref{app_sec:VarEst}, we show
that it suffers a dependence of $d^4$. 
This comparison point along with our empirical results below suggest that the error bounds established in Theorem~\ref{ConcBoundOLS} scale favorably with the problem dimension.

\begin{algorithm}[t]
\SetAlgoLined
\DontPrintSemicolon
\KwResult{Find $\widehat{\Sigma}^{\OLS}_\Gamma$\; }
\textbf{Input:} Arms $\mathcal{X} \in \mathbb{R}^d, \Gamma \in \mathbb{N}$\;

\texttt{\textcolor{blue}{//Stage 1: Take half the samples to estimate $\theta^*$}}

Determine $\lambda^*$ according to $\lambda^* = \arg \min_{\lambda \in P_\mathcal{X}} \max_{x \in \calX} x^{\top}(\sum_{x' \in \calX} \lambda_{x'} x'{x'}^{\top})^{-1}x$\; 
Pull arm $x\in \calX$ $ \lceil \lambda^*_x \Gamma/2 \rceil$ times and collect observations $\{x_t, y_t \}_{t=1}^{\Gamma/2}$\; 

Define $A^* = \sum_{t=1}^{\Gamma/2} x_t x_t^\top$ and $b^* = \sum_{t=1}^{\Gamma/2} x_t y_t$ and estimate $\widehat{\theta}_{\Gamma/2} = {A^*}^{-1} b$.

\texttt{\textcolor{blue}{//Stage 2: Take half the samples to estimate $\Sigma^*$ given $\widehat{\theta}_{\Gamma/2}$}}

Determine $\lambda^\dagger\leftarrow \lambda_x^\dagger = \arg \min_{\lambda \in P_\mathcal{X}} \max_{x \in \calX} \phi_x^{\top}(\sum_{x' \in \calX} \lambda_{x'} \phi_{x'}\phi_{x'}^{\top})^{-1}\phi_x$\;
Pull arm $x \in \calX$ $ \lambda_x^\dagger \Gamma/2 $\footnotemark times and collect observations $\{x_t, y_t \}_{t=\Gamma/2 +1}^{\Gamma}$\;

Let $A^\dagger = \sum_{t=\Gamma/2 +1}^{\Gamma} \phi_{x_t} \phi_{x_t}^\top$, $b^\dagger = \sum_{t=\Gamma/2 + 1}^{\Gamma} \phi_{x_t}  (y_t - x_t^\top \widehat{\theta}_{\Gamma/2} )^2$\; \textbf{Output:}  $\mathrm{vech}(\widehat{\Sigma}^{\OLS}_{\Gamma}) = {A^\dagger}^{-1} b^\dagger$.

\caption{{HEAD} (Heteroskedasticity Estimation by Adaptive Designs)}
\label{alg_var:HetEstLin}
\end{algorithm}

\textbf{Empirical Comparison of Variance Estimation Methods.}
Define two sets of arms $\calX_1,\calX_2$ such that $x \in \calX_1$ is drawn uniformly from a unit sphere and $x \in \calX_2$ is drawn uniformly from a sphere with radius $0.1$, $|\calX_1| = 200$ and $|\calX_2| = 1800$. Furthermore, $\Sigma^* = \mathrm{diag}(1,0.1,1,0.1,\ldots) \in \mathbb{R}^{d\times d}$,
and $\theta^* = (1,1,\ldots, 1) \in \mathbb{R}^{d}$. This setting provides the heteroskedastic variation needed to illustrate the advantages of using \texttt{HEAD} (Alg.~\ref{alg_var:HetEstLin}). An optimal algorithm will tend to target orthogonal vectors in $\calX_1$ because these arms have higher magnitudes in informative directions. Defining $\calX = \calX_1 \cup \calX_2$, we perform 32 simulations in which we randomly sample the arm set and construct estimators for each $\sigma_x^2, \ x \in \calX$. 
We compare \texttt{HEAD}, the Separate Arm Estimator, and the Uniform Estimator (see Alg.~\ref{alg_var:HetEstWhite} in App.~\ref{app_sec:VarEst}), which is based on Alg.~\ref{alg_var:HetEstLin} but with uniform sampling over $\calX$ and without splitting the samples between estimation of $\theta^*$ and $\Sigma^*$.
\texttt{HEAD} should outperform its competitors in two ways: 1) optimally allocating samples, and 2) efficiently sharing information across arms in estimation. The Uniform Estimator exploits the problem structure for estimation, but not when sampling. In contrast, the Seperate Arm Estimator optimally samples, but does not use the relationships between arms for estimation.
Fig.~\ref{fig:AB} depicts the average MAE and standard errors for each estimator. In Fig.~\ref{fig:A}, we see that \texttt{HEAD} outperforms it's competitors over a series of sample sizes for $d=15$. Fig.~\ref{fig:B} compares the estimators over a range of dimensions for a sample size of $95k$. 
While \texttt{HEAD} continues to accurately estimate the heteroskedastic noise parameters at high dimensions, the Separate Arm Estimator error scales poorly with dimension as the analysis suggests.

\begin{figure}[t]
  \centering
\includegraphics[width=.6\textwidth]{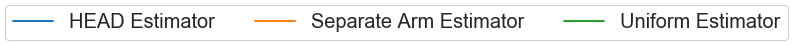}
\setcounter{subfigure}{0}
\vspace{-3mm}
  \subfloat[]{\label{fig:A}
\includegraphics[width=0.475\textwidth]{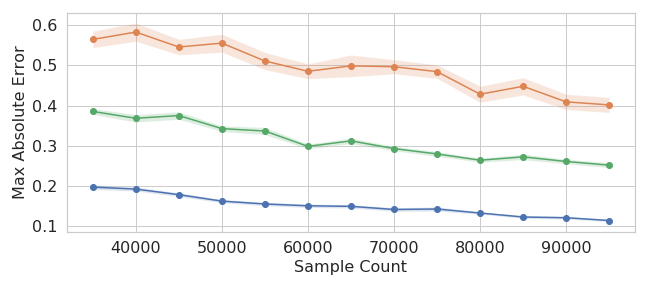}}
 \hfill 
  \subfloat[]{\label{fig:B}
\includegraphics[width=0.475\textwidth]{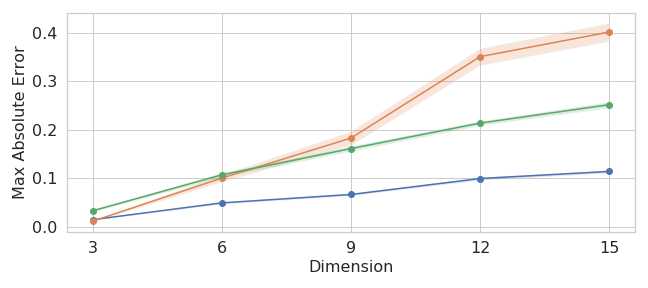}}

\caption{\small 
The experimental design estimator outperforms the competitors in terms of maximum absolute error $\max_{x \in \calX}|\sigma_x^2 - \widehat{\sigma}_x^2|$ over a range of sample sizes (a) and dimensions (b).} 
  \label{fig:AB}
\end{figure}

\footnotetext{$\lambda_x \Gamma/2 \notin \mathbb{N}$ and therefore it is not possible in reality to take $\lambda_x \Gamma/2$ samples, this problem can easily be handled by rounding procedures detailed in App.~\ref{app_sec:background}.}

\section{Adaptive Experimentation with Covariates in Heteroskedastic Settings}
\label{sec:lin_bandit}
As an application of the novel sampling procedure and resulting variance estimation bounds developed in the previous section, we now study \emph{adaptive experimentation with covariates} in the presence of heteroskedastic noise through the lens of \emph{transductive linear identification} problems~\cite{fiez2019sequential}. 
Recalling the problem setting from Section~\ref{sec:intro_prob_setting}, we consider the following fixed confidence identification objectives:
\begin{enumerate}[leftmargin=.5cm]
\item \emph{Best-Arm Identification} (\texttt{BAI}). Identify the singleton set $\{z^{\ast}\}$ where $z^{\ast}=\argmax_{z\in \mathcal{Z}}z^{\top}\theta^{\ast}$.
\item \emph{Level-Set Identification} (\texttt{LS}). Identify the set $G_{\alpha} = \{z: z^{\top}\theta^{\ast}>\alpha\}$ given a threshold
$\alpha\in \mathbb{R}$. 
\end{enumerate}
This set of objectives together with a learning protocol allow us to capture a number of practical problems of interest where heteroskedastic noise naturally arises in the assumed form.  Consider multivariate testing problems in e-commerce advertisement applications~\cite{hill2017efficient}, where the expected feedback is typically modeled through a linear combination of primary effects from content variation choices in dimensions (e.g., features or locations that partition the advertisement) and interaction effects between content variation choices in pairs of dimensions. The noise structure is naturally heteroskedastic and dependent on the combination of variation choices. Moreover, the transductive objectives allow for flexible experimentation such as identifying the optimal combination of variations or the primary effects that exceed some threshold.
In the remainder of this section, we characterize the optimal sample complexities for the heteroskedastic transductive linear identification problems and design algorithms that nearly match these lower bounds.

\subsection{Linear Estimation Methods with Heteroskedastic Noise}
To draw inferences in the experimentation problems of interest, it is necessary to consider estimation methods for the unknown parameter vector $\theta^{\ast}\in \mathbb{R}^d$. Given a matrix of covariates $X_T\in \mathbb{R}^{T\times d}$, a vector of observations $Y_T\in \mathbb{R}^{T}$, and a user-defined weighting matrix $W_T= \mathrm{diag}(w_1, \dots, w_T)\in \mathbb{R}^{T\times T}$, 
the weighted least squares  (WLS) estimator is defined as
\[\textstyle \widehat{\theta}_{\mathrm{WLS}}=(X_T^{\top}W_TX_T)^{-1}X_T^{\top}W_TY_T = \min_{\theta\in \mathbb{R}^d} \|Y_T-X_T\theta\|^2_{W_T}.\] 
Let $\Sigma_T = \mathrm{diag}(\sigma_{x_1}^2, \dots, \sigma_{x_T}^2)\in \mathbb{R}^{T\times T}$ denote the variance parameters of the noise observations.
The WLS estimator is unbiased  regardless of the weight selections, with variance 
\begin{equation}
\mathbb{V}[\widehat{\theta}_{\mathrm{WLS}}]=(X_T^{\top}W_TX_T)^{-1}X_T^{\top}W_T\Sigma_T W_T^{\top}X_T(X_T^{\top}W_TX_T)^{-1}\stackrel{W_T\rightarrow\Sigma_T^{-1}}{=}(X_T^{\top}\Sigma_T^{-1}X_T)^{-1}.
\label{eq:wls_var}
\end{equation}
The WLS estimator with $W_T= \Sigma_T^{-1}$ is the minimum variance linear unbiased estimator~\cite{aitken1936iv}.
In contrast, the ordinary least squares (OLS) estimator, obtained by taking $W_T= I$ in the WLS estimator, is unbiased but higher variance with $\mathbb{V}[\widehat{\theta}_{\mathrm{OLS}}]=(X_T^{\top}X_T)^{-1}X_T^{\top}\Sigma_T X_T(X_T^{\top}X_T)^{-1}\preceq \sigma_{\max}^2 (X_T^{\top}X_T)^{-1}$. 
Consequently, the WLS estimator is natural to consider with heteroskedastic noise for sample efficient estimation.

\subsection{Optimal Sample Complexities}\label{subsec:optimal_complex}
We now motivate the optimal sample complexity for any correct algorithm in transductive identification problems with heteroskedastic noise.
We seek to identify a set of vectors $\mathcal{H}_{\mathrm{OBJ}}$, where 
\begin{equation*}
    \mathcal{H}_{\mathrm{BAI}} = \{z^*\} = \{\arg \max_{z \in \calZ} z^\top\theta^*\} \ \mathrm{and} \ \mathcal{H}_{\mathrm{LS}} = \{ z \in \calZ: z^\top \theta^* - \alpha > 0 \}.
\end{equation*}
Algorithms consist of a sampling strategy and a rule to return a set $\widehat{\mathcal{H}}$ at a time $\tau$. 
 \begin{definition}
An algorithm is $\delta$-PAC for $\mathrm{OBJ}\in \{ \mathrm{BAI}, \mathrm{LS}\}$ if $\forall \ \theta \in \Theta$, $P_{\theta}(\widehat{\mathcal{H}} = \mathcal{H}_{\mathrm{OBJ}}) \geq 1-\delta$.
\end{definition}
For drawing parallels between best-arm and level set identification, it will be useful to define a set of values $\mathcal{Q}_{\mathrm{OBJ}}$, where 
 \begin{equation*}
     \mathcal{Q}_{\mathrm{BAI}} = \{ z : z \in \mathcal{Z}, z \neq z^* \} \ \mathrm{and} \ \mathcal{Q}_{\mathrm{LS}} = \{0\},
 \end{equation*}
 and a constant $b_{\mathrm{OBJ}}$ such that $b_\mathrm{BAI} = 0$ and $b_\mathrm{LS} = \alpha$.
 Both problems amount to verifying that for all $h \in  \mathcal{H}_{\mathrm{OBJ}}$ and $q \in  \mathcal{Q}_{\mathrm{OBJ}}$, $(h-q)^{\top}\widehat{\theta}_{\mathrm{WLS}} > b_{\mathrm{OBJ}}$, or equivalently $(h-q)^{\top} \theta^{\ast}-b_{\mathrm{OBJ}}-(h-q)^{\top}(\theta^{\ast}-\widehat{\theta}_{\mathrm{WLS}})>0$. Using a sub-Gaussian tail bound and a union bound~\cite{lattimore2020bandit}, this verification requires the following to hold for all $h \in  \mathcal{H}_{\mathrm{OBJ}}$ and $q \in \mathcal{Q}_{\mathrm{OBJ}}$ for the data collected:
 \begin{equation}
 |(h-q)^{\top}(\theta^{\ast}-\widehat{\theta}_{\mathrm{WLS}})|\leq \sqrt{2\|h-q\|^2_{\mathbb{V}[\widehat{\theta}_{\mathrm{WLS}}]}\log(2|\mathcal{Z}|/\delta)}\leq (h-q)^{\top} \theta^{\ast} - b_{\mathrm{OBJ}}.
 \label{eq:verification_condition}
 \end{equation}
 Let $\lambda_x$ denote the proportion of $T$ samples given to a measurement vector $x\in \mathcal{X}$ so that $\mathbb{V}[\widehat{\theta}_{\mathrm{WLS}}]=(\sum_{x\in \mathcal{X}}\lambda_x\sigma_{x}^{-2}xx^{\top})^{-1}/T$ when $W_T=\Sigma_T^{-1}$ as in Eq.~\eqref{eq:wls_var}.
 Now,  we reformulate the condition in Eq.~\eqref{eq:verification_condition} and minimize over designs $\lambda \in P_{\mathcal{X}}$ to see $\delta$-PAC verification requires
\begin{equation}
T\geq 2\psi^*_{\mathrm{OBJ}} \log(2|\mathcal{Z}|/\delta) \ \ \textrm{where}\ \
\psi_{\mathrm{OBJ}}^* =  \min_{\lambda \in P_\mathcal{X}} \max_{h \in \mathcal{H}_{\mathrm{OBJ}}, q \in \mathcal{Q}_{\mathrm{OBJ}}} \frac{||h- q||^2_{(\sum_{x \in \mathcal{X}} \lambda_x \sigma_x^{-2} xx^{\top})^{-1}}}{((h - q)^{\top} \theta^* - b_{\mathrm{OBJ}})^2}.
\label{eq:linear_oracle}
\end{equation}

\begin{algorithm}[t]
\SetAlgoLined
\DontPrintSemicolon
\KwResult{Find $z^* := \arg \max_{z \in \mathcal{Z}} z^{\top}\theta^*$ for \texttt{BAI} or  $G_{\alpha} := \{z \in \mathcal{Z}: z^{\top}\theta^*>\alpha \}$ for \texttt{LS} }
\textbf{Input:} $\mathcal{X} \in \mathbb{R}^d$, $\mathcal{Z}\in \mathbb{R}^d$, confidence $\delta \in (0,1)$, \texttt{OBJ} $\in \{$\texttt{BAI},\texttt{LS}$\}$, threshold $\alpha\in \mathbb{R}$\;

\textbf{Initialize:} $\ell \leftarrow 1$, $\mathcal{Z}_1 \leftarrow \mathcal{Z}$, $\mathcal{G}_1 \leftarrow \emptyset $, $\mathcal{B}_1 \leftarrow \emptyset $  \;

Call Alg.~\ref{alg_var:HetEstLin} such that $|\widehat{\sigma}_x^2 = \min \big \{\max \{ x^{\top}\widehat{\Sigma}_{\Gamma} x, \sigma_{\min}^2 \}, \sigma_{\max}^2 \big \} - \sigma_x^2| \leq \sigma_x^2/2$\; 
\While{($|\mathcal{Z}_\ell| > 1$ and \texttt{OBJ}=\texttt{BAI}) or ($|\mathcal{Z}_\ell| > 0$ and \texttt{OBJ}=\texttt{LS}) \do}{
    Let $\widehat{\lambda}_\ell \in P_\mathcal{X}$ be a minimizer of $q(\lambda, \mathcal{Y}(\mathcal{Z}_\ell))$ if \texttt{OBJ}=\texttt{BAI} and $q(\lambda, \mathcal{Z}_\ell)$ if \texttt{OBJ}=\texttt{LS}  where
        \begin{equation*}
   \textstyle q(\mathcal{V}) =  \inf_{\lambda \in P_\mathcal{X}} q(\lambda; \mathcal{V}) = \inf_{\lambda \in P_\mathcal{X}} \max_{z \in \mathcal{V}} ||z||^2_{(\sum_{x\in \mathcal{X}}\widehat{\sigma}_x^{-2} \lambda_x xx^{\top})^{-1}} 
    \end{equation*}
    
    Set $\epsilon_\ell = 2^{-\ell}$, $\tau_\ell = 3\epsilon_\ell^{-2}q(\mathcal{Z}_\ell) \log(8\ell^2 |\mathcal{Z}|/\delta)$ 
    \;
    Pull arm $x \in \mathcal{X}$ exactly $\lceil \tau_\ell \widehat{\lambda}_{\ell, x} \rceil$ times for  $n_{\ell}$ samples and collect $\{x_{\ell,i}, y_{\ell,i}\}_{i=1}^{n_\ell}$\;

    Define $A_\ell \coloneqq \sum_{i=1}^{n_{\ell}} \widehat{\sigma}_{i}^{-2}x_{\ell,i} x_{\ell,i}^{\top}, \ b_\ell = \sum_{i=1}^{n_\ell} \widehat{\sigma}_{i}^{-2} x_{\ell,i} y_{\ell, i}$
    and construct $\widehat{\theta}_\ell = A_\ell^{-1}b_\ell$\;
    \eIf{\textsf{\texttt{OBJ} is \texttt{BAI}}}{
    $\mathcal{Z}_{\ell+1} \leftarrow \mathcal{Z}_\ell \backslash \{z \in \mathcal{Z}_\ell : \max_{z' \in \mathcal{Z}_\ell} \langle z' - z, \widehat{\theta}_\ell \rangle > \epsilon_\ell\}$\;}
    {

    $\mathcal{G}_{\ell+1} \leftarrow \mathcal{G}_\ell \cup \{z \in \mathcal{Z}_\ell : \langle z, \widehat{\theta}_\ell \rangle - \epsilon_\ell>\alpha\}$\;

    $\mathcal{B}_{\ell+1} \leftarrow \mathcal{B}_\ell \cup \{z \in \mathcal{Z}_\ell : \langle z, \widehat{\theta}_\ell \rangle +\epsilon_\ell<\alpha$\}\;

    $\mathcal{Z}_{\ell+1} \leftarrow \mathcal{Z}_\ell \backslash \{\{\mathcal{G}_\ell\setminus \mathcal{G}_{\ell+1}\}\cup \{\mathcal{B}_\ell\setminus \mathcal{B}_{\ell+1}\} \}$\;}
    
    $\ell \leftarrow \ell + 1$
}
\textbf{Output:} $\mathcal{Z}_{\ell}$ for \texttt{BAI} or $\mathcal{G}_{\ell}$ for \texttt{LS}
\caption{(\texttt{H-RAGE}) Heteroskedastic Randomized Adaptive Gap Elimination}\label{alg:Het}
\end{algorithm}

This motivating analysis gives rise to the following sample complexity lower bound.
\begin{theorem}\label{thm:lb}
Consider an objective $\mathrm{OBJ}$ of best-arm identification ($\mathrm{BAI}$) or level-set identification ($\mathrm{LS}$). For any $\delta \in (0,1)$, a $\delta$-PAC algorithm must satisfy
    $$E_\theta[\tau] \geq 2\log(1/2.4 \delta) \psi_{\mathrm{OBJ}}^*.$$
\end{theorem}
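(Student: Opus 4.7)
The plan is a standard change-of-measure argument in the spirit of Kaufmann, Cappé, and Garivier (2016), specialized to Gaussian observations with known heteroskedastic variances $\sigma_x^2$. For each $\theta^\prime \in \mathbb{R}^d$ whose answer $\mathcal{H}_{\mathrm{OBJ}}(\theta^\prime)$ differs from $\mathcal{H}_{\mathrm{OBJ}}(\theta^*)$ (i.e. $\theta^\prime \in \mathrm{Alt}(\theta^*)$), the $\delta$-PAC property combined with the data-processing inequality for KL-divergence gives
\[
\sum_{x \in \mathcal{X}} \mathbb{E}_\theta[T_x(\tau)] \cdot \mathrm{KL}\bigl(\mathcal{N}(x^\top\theta^*,\sigma_x^2),\; \mathcal{N}(x^\top\theta^\prime,\sigma_x^2)\bigr) \;\geq\; d(\delta, 1-\delta) \;\geq\; \log(1/(2.4\delta)),
\]
where $T_x(\tau)$ is the number of samples of arm $x$ at stopping time $\tau$. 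Using the closed form $\mathrm{KL}(\mathcal{N}(\mu_1,\sigma^2),\mathcal{N}(\mu_2,\sigma^2)) = (\mu_1-\mu_2)^2/(2\sigma^2)$, and introducing the proportions $\lambda_x := \mathbb{E}_\theta[T_x(\tau)]/\mathbb{E}_\theta[\tau] \in P_\mathcal{X}$, the inequality rearranges to
\[
\mathbb{E}_\theta[\tau] \;\geq\; \frac{2\log(1/(2.4\delta))}{\inf_{\theta^\prime \in \mathrm{Alt}(\theta^*)} \sum_{x \in \mathcal{X}} \lambda_x (x^\top(\theta^*-\theta^\prime))^2/\sigma_x^2}.
\]
Since this holds for the $\lambda$ chosen by the algorithm, and we are lower bounding over all $\delta$-PAC algorithms, the right-hand side is further lower bounded by the supremum over $\lambda \in P_\mathcal{X}$.

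The remaining work is to show the inner infimum equals $1/\psi_{\mathrm{OBJ}}^*$. Write $A(\lambda) = \sum_{x \in \mathcal{X}} \lambda_x \sigma_x^{-2} x x^\top$, so the objective becomes $\|\theta^* - \theta^\prime\|_{A(\lambda)}^2$. One then characterizes $\mathrm{Alt}(\theta^*)$ as a finite union of half-spaces indexed by pairs $(h,q) \in \mathcal{H}_{\mathrm{OBJ}} \times \mathcal{Q}_{\mathrm{OBJ}}$: for \texttt{BAI}, $\theta^\prime \in \mathrm{Alt}$ iff some $q \neq z^*$ satisfies $(q-z^*)^\top\theta^\prime \geq 0$; for \texttt{LS}, some $h \in \mathcal{Z}$ satisfies $\mathrm{sign}(h^\top\theta^\prime - \alpha) \neq \mathrm{sign}(h^\top\theta^* - \alpha)$, which on the boundary reduces to $h^\top\theta^\prime = \alpha$. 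In both cases a single half-space takes the form $(h-q)^\top\theta^\prime \gtrless b_{\mathrm{OBJ}}$ with sign determined by $\theta^*$.

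Minimizing the positive-definite quadratic $\|\theta^*-\theta^\prime\|_{A(\lambda)}^2$ over a single such half-space is an elementary Lagrangian calculation: the minimizer is the $A(\lambda)$-projection onto the bounding hyperplane, yielding squared distance $((h-q)^\top\theta^* - b_{\mathrm{OBJ}})^2 / \|h-q\|_{A(\lambda)^{-1}}^2$. Taking the minimum over all $(h,q) \in \mathcal{H}_{\mathrm{OBJ}} \times \mathcal{Q}_{\mathrm{OBJ}}$ and then the supremum over $\lambda$ gives, by direct comparison with the definition in Eq.~\eqref{eq:linear_oracle},
\[
\sup_{\lambda \in P_\mathcal{X}} \inf_{\theta^\prime \in \mathrm{Alt}(\theta^*)} \sum_{x} \lambda_x \frac{(x^\top(\theta^*-\theta^\prime))^2}{\sigma_x^2} \;=\; \frac{1}{\psi_{\mathrm{OBJ}}^*}.
\]
Substituting back produces the claimed bound $\mathbb{E}_\theta[\tau] \geq 2\log(1/(2.4\delta))\,\psi_{\mathrm{OBJ}}^*$.

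The main obstacle is the bookkeeping in the third step: verifying that the (generally open) alternative sets for both \texttt{BAI} and \texttt{LS} reduce to a finite union of half-spaces whose indexing exactly matches the $(h,q)$ pairs appearing in $\psi_{\mathrm{OBJ}}^*$, including correct handling of the shift $b_{\mathrm{OBJ}}$ and the fact that for \texttt{LS} both sign-flip directions yield identical squared projection distances. The transportation inequality and the Gaussian KL computation are standard, and the quadratic-program reduction is routine once the indexing is pinned down.
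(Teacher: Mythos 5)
Your proposal is correct and follows essentially the same route as the paper: the transportation inequality of Kaufmann et al., the closed-form Gaussian KL, and minimization of the resulting $A(\lambda)$-quadratic over the half-spaces indexed by $(h,q)\in\mathcal{H}_{\mathrm{OBJ}}\times\mathcal{Q}_{\mathrm{OBJ}}$, which the paper carries out by explicitly writing down the projected alternatives $\theta_j(\epsilon,\lambda)$ (resp.\ $\theta_z(\epsilon,\lambda)$) and letting $\epsilon\to 0$ to handle the openness of the alternative set. The only difference is presentational: you state the projection distance abstractly where the paper instantiates the minimizer and computes the KL directly.
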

\begin{remark}
This lower bound assumes that the noise variance parameters are \emph{known}. Recall that $\kappa:=\sigma_{\max}^2/\sigma_{\min}^2$. Comparing to the existing lower bounds for the identification objectives with homoskedastic noise~\cite{fiez2019sequential, soare2015sequential, mason2022nearly}, which apply in this setting by taking the variance parameter $\sigma^2=\sigma_{\max}^2$ and are characterized by an instance-dependent parameter $\rho_{\mathrm{OBJ}}^{\ast}$, we show in Appendix-\ref{subsec:sandwich} that $\kappa^{-1}\leq \psi_{\mathrm{OBJ}}^{\ast}/\rho_{\mathrm{OBJ}}^{\ast}\leq 1$. 
In general, $\kappa$ can be quite large in many problems with heteroskedastic noise. Consequently, this implies that near-optimal algorithms for linear identification with homoskedastic noise can be \emph{highly suboptimal} (by up to a multiplicative factor of $\kappa$) in this setting.
\end{remark}

\subsection{Adaptive Designs with Unknown Heteroskedastic Variance}
\label{sec:upper_bound}
 We now present Alg.~\ref{alg:Het}, \texttt{H-RAGE} (Heteroskedastic Randomized Adaptive Gap Elimination), as a solution for adaptive experimentation with covariates in heteroskedastic settings. The approach is based on the methods for obtaining accurate estimates of the unknown variance parameters $\{\sigma_x^2\}_{x\in \mathcal{X}}$ from Section~\ref{sec:var_linear_estimator}, and adaptive experimental design methods for minimizing the uncertainty in directions of interest given the estimated variance parameters. Denote by $\Delta=\min_{h\in \mathcal{H}_{\mathrm{OBJ}}}\min_{q\in \mathcal{Q}_{\mathrm{OBJ}}}(h-q)^{\top}\theta^{\ast}-b_{\mathrm{OBJ}}$ the minimum gap for the objective.
 We prove the following guarantees for Alg.~\ref{alg:Het}.
\begin{theorem}\label{UB}
Consider an objective $\mathrm{OBJ}$ of best-arm identification ($\mathrm{BAI}$) or level-set identification ($\mathrm{LS}$). 
The set returned from Alg.~\ref{alg:Het} at a time $\tau$ is $\delta$-PAC and 
    \setlength{\belowdisplayskip}{-10pt}
    \begin{equation*}
    \tau = \mathcal{O}\big(\psi_{\mathrm{OBJ}}^* \log(\Delta^{-1})[\log(|\mathcal{Z}|/\delta) + \log(\log(\Delta^{-1}))] +\log(\Delta^{-1}) d^2+\log(|\calX|/\delta) \kappa^2 d^2\big).
    \end{equation*}
\end{theorem}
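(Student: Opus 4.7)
The plan is to decompose the sample complexity into (i) the fixed cost of the initial variance-estimation call, (ii) the per-round cost of the elimination loop, and (iii) rounding overhead, and to prove $\delta$-PAC correctness by a union bound over a variance-estimation event and a per-round concentration event for the weighted least-squares estimator $\widehat{\theta}_\ell$.

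First, I would account for the \texttt{HEAD} call on line 3. Invoking Theorem \ref{ConcBoundOLS} with target error $\sigma_{\min}^2/2$ and confidence $\delta/2$ shows that a budget $\Gamma = O(\log(|\calX|/\delta)\kappa^2 d^2)$ suffices to make the event
\[
\mathcal{E}_{\mathrm{var}} = \left\{\, |\widehat{\sigma}_x^2 - \sigma_x^2| \leq \sigma_x^2/2 \text{ for all } x \in \calX\,\right\}
\]
hold with probability $\geq 1-\delta/2$. Crucially, under $\mathcal{E}_{\mathrm{var}}$ the estimated and true inverse variances are within a constant factor of one another, $\widehat{\sigma}_x^{-2} \in [\tfrac{2}{3}\sigma_x^{-2}, 2\sigma_x^{-2}]$, which lets me substitute $\widehat{\sigma}$ for $\sigma$ throughout the remaining analysis up to absolute constants. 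This gives the third term in the theorem.

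Next, condition on $\mathcal{E}_{\mathrm{var}}$ and analyze the phased elimination. At round $\ell$, the rows of $A_\ell$ are drawn according to a fixed (data-dependent) allocation, so conditional on the history, $\widehat{\theta}_\ell$ is unbiased with covariance $A_\ell^{-1}$. Because $\widehat{\lambda}_\ell$ minimizes $q(\lambda;\mathcal{V})$ built from $\widehat{\sigma}$, under $\mathcal{E}_{\mathrm{var}}$ it is a constant-factor approximation to the optimal design with true variances, and for any pair $(h,q)$ we have $\|h-q\|^2_{A_\ell^{-1}} \leq C\,q(\mathcal{Z}_\ell;\sigma)/\tau_\ell$. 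Defining the good event
\[
\mathcal{E}_\ell = \left\{\, |\langle h-q, \widehat{\theta}_\ell - \theta^*\rangle| \leq \epsilon_\ell/4 \text{ for all relevant } (h,q)\,\right\},
\]
a Gaussian tail bound combined with a union bound over $\mathcal{Y}(\mathcal{Z}_\ell)$ (for \texttt{BAI}) or over $\mathcal{Z}_\ell \times \{0\}$ (for \texttt{LS}) at confidence $\delta/(8\ell^2)$ shows, via the choice $\tau_\ell = 3\epsilon_\ell^{-2}q(\mathcal{Z}_\ell)\log(8\ell^2|\mathcal{Z}|/\delta)$, that $\bigcap_\ell \mathcal{E}_\ell$ holds with probability $\geq 1-\delta/2$. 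A standard inductive argument on $\ell$ then verifies correctness: for \texttt{BAI}, $z^*$ survives every round and any $z$ with $(z^*-z)^\top\theta^* > 4\epsilon_\ell$ is removed in round $\ell$; for \texttt{LS}, each $z$ is correctly placed into $\mathcal{G}_\ell$ or $\mathcal{B}_\ell$ once $\epsilon_\ell < |z^\top\theta^* - \alpha|/2$. In both cases the loop terminates by round $L = O(\log(1/\Delta))$.

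Finally, I would sum the per-round sample counts. Using the inductive gap shrinkage, the surviving set $\mathcal{Z}_\ell$ contains only points with gap at most $O(\epsilon_{\ell-1})$, and the definition of $\psi^*_{\mathrm{OBJ}}$ in Eq.~\eqref{eq:linear_oracle} then gives $q(\mathcal{Z}_\ell;\sigma)/\epsilon_\ell^2 \leq C\,\psi^*_{\mathrm{OBJ}}$. Combined with the multiplicative agreement between $q(\cdot;\widehat{\sigma})$ and $q(\cdot;\sigma)$ under $\mathcal{E}_{\mathrm{var}}$, each $\tau_\ell = O(\psi^*_{\mathrm{OBJ}}[\log(|\mathcal{Z}|/\delta)+\log\ell])$, and summing over $\ell \leq L$ yields the leading $\psi^*_{\mathrm{OBJ}}\log(\Delta^{-1})[\log(|\mathcal{Z}|/\delta)+\log\log(\Delta^{-1})]$ term. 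The ceiling operation $\lceil \tau_\ell \widehat{\lambda}_{\ell,x}\rceil$ inflates each round's count by at most the support size of $\widehat{\lambda}_\ell$, which a standard efficient rounding scheme (App.~\ref{app_sec:background}) bounds by $O(d^2)$, yielding the $d^2\log(\Delta^{-1})$ term. Adding the variance-estimation cost gives the claimed bound. The main obstacle is the two-way approximation step: showing that the design $\widehat{\lambda}_\ell$, which is chosen by the algorithm using the random estimates $\widehat{\sigma}$, and the objective value $q(\mathcal{Z}_\ell;\widehat{\sigma})$ evaluated at it, are both within constant factors of their counterparts under the true $\sigma$. This is the reason the algorithm demands the \emph{multiplicative} accuracy $|\widehat{\sigma}_x^2 - \sigma_x^2| \leq \sigma_x^2/2$ rather than merely small additive error, and it is what ensures the $\kappa$-dependence is confined to the (additive) variance-estimation burn-in.
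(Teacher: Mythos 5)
Your proposal is correct and follows essentially the same route as the paper: a burn-in call to \texttt{HEAD} yielding the multiplicative guarantee $|\widehat{\sigma}_x^2-\sigma_x^2|\le \sigma_x^2/2$ (hence the additive $\log(|\calX|/\delta)\kappa^2 d^2$ term), a union bound over that event and per-round concentration of the plug-in WLS estimator for correctness, the two-way constant-factor equivalence between the design objectives under $\widehat{\sigma}$ and $\sigma$ to relate $\sum_\ell \epsilon_\ell^{-2}q(\mathcal{S}_\ell)$ to $\psi^*_{\mathrm{OBJ}}$, and Caratheodory/rounding to charge the ceilings at $O(d^2)$ per round. The only cosmetic difference is that you bound $q(\mathcal{S}_\ell)/\epsilon_\ell^2 \lesssim \psi^*_{\mathrm{OBJ}}$ round by round while the paper bounds the whole sum at once via a max-versus-average argument; these are equivalent.
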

\begin{remark}
This result shows that Alg.~\ref{alg:Het} is nearly instance-optimal. Critically, the impact of not knowing the noise variances ahead of time only has an \emph{additive} impact on the sample complexity relative to the lower bound, whereas existing near-optimal methods for the identification problems with homoskedastic noise would be suboptimal by a \emph{multiplicative} factor depending on $\kappa$ relative to the lower bound. Given that the lower bound assumes the variances are known, an interesting question for future work is a tighter lower bound assuming the variance parameters are unknown.
\end{remark}

\textbf{Algorithm Overview.} Observe that for $\psi^*_{\mathrm{OBJ}}$, the optimal allocation depends on both the unknown gaps $((h - q)^{\top} \theta^* - b_{\mathrm{OBJ}})$ for $h \in \mathcal{H}_{\mathrm{OBJ}}$ and $q \in \mathcal{Q}_{\mathrm{OBJ}}$, and the unknown noise variance parameters $\{\sigma_x^2\}_{x\in \mathcal{X}}$. To handle this, 
 the algorithm begins with an initial burn-in phase using the procedure from Alg.~\ref{alg_var:HetEstLin} to produce estimates $\{\widehat{\sigma}_x^2\}_{x\in \mathcal{X}}$ of the unknown parameters $\{\sigma_x^2\}_{x\in \mathcal{X}}$, achieving a multiplicative error bound of the form $|\sigma_x^2-\widehat{\sigma}_x^2|\leq \sigma_x^2/2$ for all $x\in \mathcal{X}$. 
 From this point, the estimates $\{\widehat{\sigma}_x^2\}_{x\in \mathcal{X}}$ are used as plug-ins to the experimental designs and the weighted least squares estimates.
 In each round $\ell \in \mathbb{N}$ of the procedure, we maintain a set of uncertain items $\mathcal{Z}_{\ell}$, and obtain the sampling distribution, $\lambda$, by minimizing the maximum predictive variance of the WLS estimator over all directions $y=h-q$ for $h \in \mathcal{H}_{\mathrm{OBJ}}$ and $q \in \mathcal{Q}_{\mathrm{OBJ}}$. This is known as an $\mathcal{XY}$-allocation and $G$-optimal-allocation in the best-arm and level set identification problems respectively. Critically, the number of samples taken in round $\ell$ guarantees that the error in all estimates is bounded by $\epsilon_{\ell}$, such that for any $h \in \mathcal{H}_{\mathrm{OBJ}}$ and $q \in \mathcal{Q}_{\mathrm{OBJ}}$, $(h-q)^{\top}\theta^{\ast}> 2\epsilon_{\ell}$ is eliminated from the active set at the end of the round. By progressively honing in on the optimal item set $\mathcal{H}_{\mathrm{OBJ}}$, the sampling allocation gets closer to approximating the oracle allocation.

\section{Experiments}
\label{sec:experiments}
We now present experiments focusing on the transductive linear bandit setting. We compare \texttt{H-RAGE} and its corresponding oracle to their homoskedastic counterparts, \texttt{RAGE}~\cite{fiez2019sequential} and the oracle allocation with noise equal to $\sigma_{\max}^2$.
\texttt{RAGE} is provably near-optimal in the homoskedastic setting and known to perform well empirically.
The algorithm is similar in concept to Alg.~\ref{alg:Het}, but lacks the initial exploratory phase to estimate the variance parameters.
We include the pseudo-code of algorithms we compare against in App.~\ref{app_sec:experiments}.
All algorithms are run at a confidence level of $\delta=0.05$, and we use the Franke-Wolfe method to compute the designs.

\textbf{Linear bandits: Example 1 (Signal-to-Noise).} We begin by presenting an experiment that is a variation of the standard benchmark problem for BAI in linear bandits~\cite{soare2014best} introduced in Sec.~\ref{subsec:homo_vs_hetero}. Define $e_i \in \mathbb{R}^d$ as the standard basis vectors in $d$ dimensions. In the standard example, 
$\mathcal{X}=\mathcal{Z}=\{e_1, e_2, x'\}$ where $x'=\{e_1\cos(\omega)+ e_2\sin(\omega)\}$ for $\omega \rightarrow 0$, and $\theta^{\ast}=e_1$. Thus $x_1=e_1$ is optimal, $x_3=x'$ is near-optimal, and $x_2=e_2$ is informative to discriminate between $x_1$ and $x_3$. 
We consider an extension of this benchmark in multiple dimensions that highlights the performance gains from accounting for heteroskedastic variance.
First, define an arm set $\mathcal{H}$ that is comprised of the standard example but with varied magnitudes.
For $q \in (0,1)$,
\begin{equation*}
    \mathcal{H} = \{e_1\}\cup \{e_2\} \cup \{q \times e_i\}_{i=3}^d\cup  \{e_1\cos(\omega) + e_i\sin(\omega)\}_{i=2}^d.
\end{equation*}
Then define $\mathcal{G}$ to be a series of vectors such that the rank of $ \mathrm{span} \big (\{\phi_x : x \in \mathcal{G} \cup \mathcal{H} \} \big ) = \mathbb{R}^M$. $\mathcal{G}$ allows for variance estimation. Finally, let the arm and item set be $\calX = \mathcal{Z} = \mathcal{G} \cup \mathcal{H}$ and $\theta^* = e_1$.

 We define $\Sigma^{\ast}$ as the $d$-dimensional identity matrix. Observe that $x_1=e_1$ is optimal, the $d-1$ arms given by $\{e_1\cos(\omega) + e_i\sin(\omega)\}_{i=2}^d$ are near-optimal with equal gaps $\Delta=1-\cos(\omega)$, and the $d-1$ arms given by $\{e_2\} \cup q*\{e_i\}_{i=3}^d$ are informative to sample for discriminating between $x_1 = e_1$ and $x'=e_1\cos(\omega) + e_i\sin(\omega)$ for each $i\in \{2, \dots, d\}$, respectively. 
However, performing BAI without accounting for heteroskedastic variances will tend toward a design that prioritizes sampling informative arms $\{q \times e_i\}_{i=3}^d$, since these have smaller magnitudes and seem less informative about the benchmark examples in dimensions $i\in \{3,4\ldots d\}$. Since the variance of $q*\{e_i\}_{i=3}^d$ is equal to $q^2$, the signal to noise ratio is actually constant across arms leading to an equal distribution of samples across $\{e_2\} \cup q*\{e_i\}_{i=3}^d$ when we account for heteroskedastic variances. This is illustrated in App.~\ref{app_sec:experiments} for $d=4$, $\omega = 0.02$, $q=0.4$ and $\mathcal{G} = \big \{0.5(e_1 + e_2) + 0.1e_3, 0.5(e_1 + e_2) + 0.1(e_3 + e_4) \big \}$. In the same setting, Fig.~\ref{fig:sim1} shows that this change in allocation accompanied by more accurate confidence intervals results in a large decrease in the sample size needed to identify the best arm.

\textbf{Example 2: Benchmark.} Arm set $\mathcal{X}$ is now the benchmark example with the informative arms bent at a $45^{\circ}$ angle,
\begin{equation*}
 \calX  = \{e_1\} \cup  \{e_1\cos(\omega) + e_2\sin(\omega)\} \cup 
  \{e_i\}_{i=3}^d \cup \big \{ \{ e_i/\sqrt{2} + e_j/\sqrt{2}\}_{i=1}^d \}_{j>i}^d.
\end{equation*}
Let the arm and item set be $\calX = \mathcal{Z}$ and $\theta^* = e_1$.
 In Example 1, we have many near-optimal arms and \texttt{H-RAGE} reveals that each of these are equally difficult to distinguish from the best. In contrast, Example 2 contains one near-optimal arm along with many potentially informative arms and we are interested in identifying the one that is most helpful. Intuitively, Example 1 uses heteroskedastic variance estimates to assess the difficulty of many problems, while Example 2 uses the same  information to assess the benefits of many solutions. Let the unknown noise matrix be given by $\Sigma^* = \mathrm{diag}(\sigma_1^2, \sigma_2^2, \ldots, \sigma_d^2)$ where $\sigma_1^2 = \alpha^2$, $(\sigma_2^2, \sigma_3^2) = \beta^2$ and $(\sigma_4^2, \sigma_5^2 \ldots, \sigma_d^2) = \alpha^2$. $x_1 = e_1$ is again optimal and $x_2 = \{e_1\cos(\omega) + e_2\sin(\omega)\}$ is a near optimal arm. In this case,  $ \{ e_2/\sqrt{2} + e_1/\sqrt{2}\}  \cup  \{ e_2/\sqrt{2} + e_i/\sqrt{2}\}_{i=3}^d$ are informative for distinguishing between $x_1$ and $x_2$ and the oracle allocation assuming homoskedastic noise (Homoskedastic Oracle) picks three of these vectors to sample. However, if $\alpha^2 \gg \beta^2$ then it is optimal to sample $\{e_2/\sqrt{2} + e_3/\sqrt{2}, e_3 \}$ over other potential combinations. This contrast in allocations is shown in App.~\ref{app_sec:experiments} for $d=3$, $\omega = 0.02$, $\alpha^2 = 1$ and $\beta^2 = 0.2$.
 In the same setting, Fig.~\ref{fig:sim2} shows that estimating and accounting for heteroskedastic noise contributes to a reduction in sample complexity.

\begin{figure}[t] 
\centering
\subfloat{\label{fig:algorithm_legend}
\includegraphics[width=.8\textwidth]{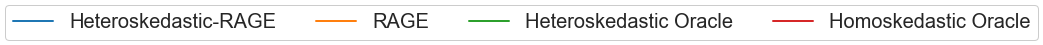}}
\setcounter{subfigure}{0}
\vspace{-3mm}

\subfloat[\small Experiment 1]{\label{fig:sim1}
\includegraphics[width=0.32\textwidth]{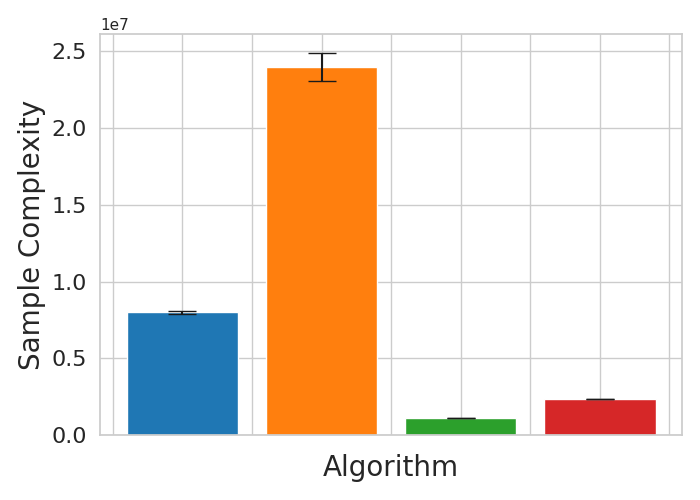}}
\hfill
\subfloat[\small Experiment 2]{\label{fig:sim2}
\includegraphics[width=0.32\textwidth]{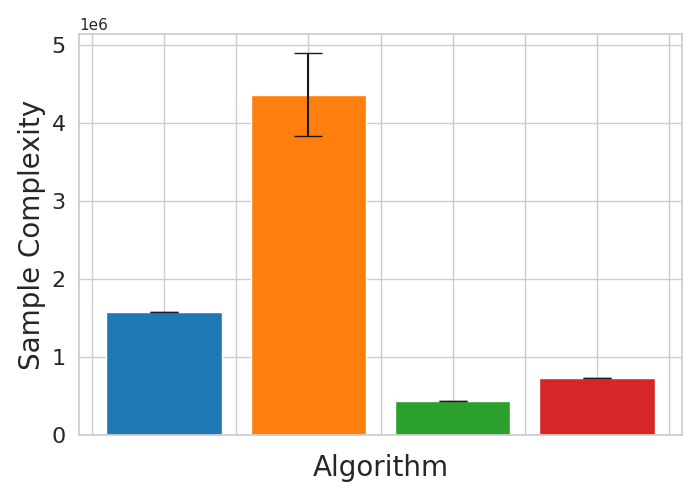}}
\subfloat[\small Experiment 3]{\label{fig:sim3}
\includegraphics[width=0.32\textwidth]{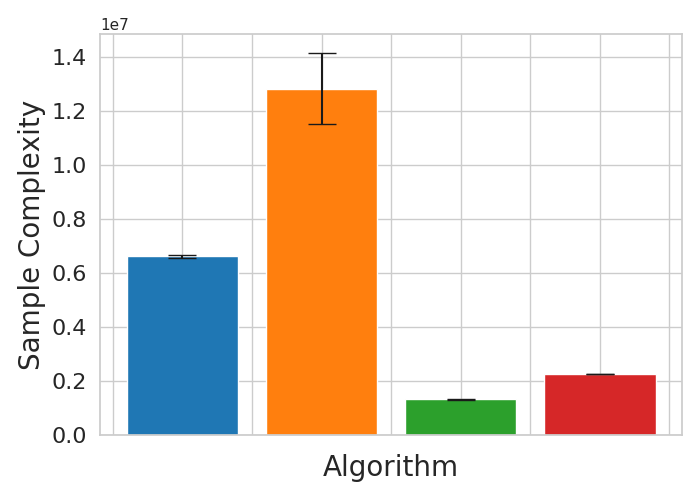}}
\caption{\small Experimental results show the mean sample complexity of each algorithm over 32 simulations with the standard error of the mean being shown by error bars.}\label{fig:BAIResults}
\end{figure}

\textbf{Experiment 3: Multivariate Testing.} Consider a multivariate experiment defined by a series of $m$ dimensions, each of which have $n$ variations.
 Let $\mathcal{X}=\mathcal{Z}$ denote the set of layouts corresponding to the combinations of variant choices in the dimensions so that $|\mathcal{X}|=n^m$.
The multivariate testing problem is often modeled in the linear bandit framework using the expected feedback for any layout $x\in \mathcal{X}\subset \{0,1\}^d$, where $d=1+mn+n^2m(m+1)/2$, given by
\begin{equation*}
\textstyle x^{\top}\theta^{\ast} := \theta_0^{\ast}+ \big (\sum_{i=1}^m\sum_{j=1}^n\theta_{i, j}^{\ast}x_{i, j}\big)+\big(\sum_{i=1}^m\sum_{i'= i+1}^m\sum_{j=1}^n\sum_{j'=1}^n \theta_{i, i', j, j'}^{\ast}x_{i, j}x_{i', j'}\big).
\end{equation*}
In this model, $x_{i, j}\in \{0, 1\}$ is an indicator for variation $j\in \{1, \dots, n\}$ being placed in dimension $i\in \{1, \dots, m\}$, and $\sum_{j=1}^n x_{i, j}=1$ for all $i\in  \{1, \dots, n\}$ for any layout $x\in \mathcal{X}$. Observe that the expected feedback for a layout is modeled as a linear combination of a common bias term, primary effects from the variation choices within each dimension, and secondary interaction effects from the combinations of variation choices between dimensions. We consider an environment where the effect of variation changes in one dimension, call it dimension $j$, has much higher variance than others; resulting in the best variation for dimension $j$ being harder to identify. An algorithm that accounts for heteroskedastic variance will devote a greater number of samples to compare variations in dimension $j$, whereas an algorithm that upper bounds the variance will split samples equally between dimensions.
For a simulation setting with 2 variations in 3 dimensions, we define $\Sigma^* = \mathrm{diag}(0.3,0.7,10^{-3}, 10^{-3}, \ldots) \in \mathbf{ \mathbb{R}^{7\times 7}}$ and $\theta^* = (0,0.01,0.015,0.02,-0.1,-0.1,\ldots) \in \mathbf{\mathbb{R}^7}$. This simulation setting implies that the second variation in each of the dimensions is better than the first, but the positive effect in the first dimension is hardest to identify.
In App.~\ref{app_sec:experiments} we can see that this causes the heteroskedastic oracle to devote more weight than the homoskedastic oracle to vectors that include the second variation in the first dimension.  Fig.~\ref{fig:sim3} depicts the improvement in sample complexity resulting from accounting for the heteroskedastic variances.

\section{Conclusion}
This paper presents an investigation of online linear experimental design problems with heteroskedastic noise.
We propose a two-phase sample splitting procedure for estimating the unknown noise variance parameters 
based on G-optimal experimental designs and show error bounds that scale efficiently with the dimension of the problem.
The proposed approach is then applied to 
fixed confidence transductive best-arm and level set identification with heteroskedastic noise and instance-dependent lower bounds with provably near-optimal algorithms are presented. 

\clearpage 
\bibliographystyle{plainnat}
\bibliography{refs}

\newpage
\appendix

\tableofcontents
\addtocontents{toc}{\protect\setcounter{tocdepth}{3}}
\addcontentsline{toc}{section}{Appendices}

\section{Background on Experimental Designs and Rounding}
\label{app_sec:background}

We use optimal experiment design~\cite{kiefer1960equivalence, pukelsheim2006optimal} to minimize uncertainty in the estimator of interest.
Consider the WLS estimator with the weight matrix $W_T= \Sigma_T^{-1}$ so that $\mathbb{V}(\widehat{\theta}_{\mathrm{WLS}})=(X_T^{\top}\Sigma_T^{-1}X_T)^{-1}=(\sum_{t=1}^{\top}\sigma_{x_t}^{-2}x_tx_t^{\top})^{-1}$. Let $\lambda_x$ denote the proportion of the total samples $T$ given to the measurement vector $x\in \mathcal{X}$ so that $\mathbb{V}(\widehat{\theta}_{\mathrm{WLS}})=(\sum_{x\in \mathcal{X}}\lambda_x\sigma_{x}^{-2}xx^{\top})^{-1}/T$.
We wish to take $T$ samples so as to construct the design $\sum_{x\in \mathcal{X}}T\lambda_x\sigma_{x}^{-2}xx^{\top}$, however $T\lambda_x$ is often not an integer. As has been discussed at depth~\cite{soare2014best, fiez2019sequential, mason2022nearly, pukelsheim2006optimal, allen2021near}, efficient rounding schemes can solve this problem. Given some constant tolerance threshold $\epsilon$, a design $\lambda \in P_\calX$, and a minimum sample size $r(\epsilon)$, efficient rounding procedures return a fixed allocation $\{x_t\}_{t=1}^{T}$ that yields a $(1+\epsilon)$ approximation to the intended design.
A simple, well known procedure with $r(\epsilon)\leq (d(d+1)+2)/\epsilon$ comes from~\citet{pukelsheim2006optimal}, while the scheme of~\citet{allen2021near} yields $r(\epsilon)\leq \mathcal{O}(d/\epsilon^2)$. In our experimental design algorithms, we leverage the aforementioned rounding schemes. 

We adapt the guarantees of~\cite{allen2021near} for Alg.~\ref{alg_var:HetEstLin} as an example in the following proposition. For sample size $N$, define $\calX^*$ as a set composed of the elements of $\calX \subset \mathbb{R}^d$ duplicated $N$ times and 
$\mathcal{S}_{N} = \{ s \in \{0,1, \ldots, N\}^{N|\calX|} : \sum_{t=1}^{N|\calX|} s_i \leq N \}$ as the set of discrete sample allocations.

\begin{proposition}\label{rounding}
Suppose $\epsilon \in (0,1/3]$ and $N \geq 5d/\epsilon^2$. Let $\lambda \in P_{\calX}$, then in polynomial-time (in $n$ and $d$), we can round $\lambda N$ to an integral solution $\widehat{\lambda} \in \mathcal{S}_{N}$ satisfying 
\begin{equation*}
\max_{x \in \calX} x^\top \left (\sum_{x' \in \calX^*} \widehat{\lambda}_{x'}x' {x'}^\top \right )^{-1}x \leq (1+6\epsilon)\min_{\lambda \in P_{\calX}} \max_{x \in \calX} x^\top \left (\sum_{x' \in \calX} N\lambda_{x'}x' {x'}^\top \right )^{-1}x.
\end{equation*}
\end{proposition}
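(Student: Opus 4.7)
The plan is to treat this statement as an adaptation of the near-optimal rounding result of \citet{allen2021near}, specializing their general guarantee to the $G$-optimality criterion that appears on the right-hand side. Their procedure runs in polynomial time and, given any fractional design $\lambda \in P_{\calX}$ and target total sample count $N$, produces an integral allocation $\widehat{\lambda} \in \mathcal{S}_N$ whose empirical information matrix is spectrally close (in Löwner order) to the scaled fractional one. The sample-size condition $N \geq 5d/\epsilon^2$ is exactly the regime in which their spectral guarantee kicks in with multiplicative slack $O(\epsilon)$.

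First, I would invoke the rounding algorithm of \citet{allen2021near} on the input $(\lambda, N)$ to obtain $\widehat{\lambda} \in \mathcal{S}_N$ with the PSD comparison
\begin{equation*}
\widehat{A} \;:=\; \sum_{x' \in \calX^{*}} \widehat{\lambda}_{x'} x'{x'}^{\top} \;\succeq\; (1 - c\epsilon)\, A,
\qquad A \;:=\; \sum_{x' \in \calX} N\lambda_{x'} x'{x'}^{\top},
\end{equation*}
for an absolute constant $c$ that can be read off from their analysis. Second, I would translate this spectral bound into the $G$-criterion bound: since $M \mapsto x^{\top} M^{-1} x$ is monotone decreasing in the Löwner order on positive definite matrices, $\widehat{A} \succeq (1-c\epsilon) A$ implies $x^{\top} \widehat{A}^{-1} x \leq (1-c\epsilon)^{-1} x^{\top} A^{-1} x$ for every $x \in \calX$. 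Taking the max over $x \in \calX$ on both sides and then choosing $\lambda$ to be the minimizer of $\max_{x \in \calX} x^{\top} A^{-1} x$ on the right-hand side yields
\begin{equation*}
\max_{x \in \calX} x^{\top} \widehat{A}^{-1} x \;\leq\; (1-c\epsilon)^{-1} \min_{\lambda \in P_{\calX}} \max_{x \in \calX} x^{\top} \Bigl(\sum_{x' \in \calX} N\lambda_{x'} x'{x'}^{\top}\Bigr)^{-1} x.
\end{equation*}

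Finally, I would use the elementary inequality $(1-c\epsilon)^{-1} \leq 1 + 6\epsilon$, valid for $\epsilon \in (0, 1/3]$ provided the constant $c$ in the black-box rounding bound is at most a mild absolute value (e.g.\ $c \leq 4$); this pins down the stated $(1 + 6\epsilon)$ factor. Polynomial running time is inherited directly from \citet{allen2021near}.

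The only genuinely new ingredient is the monotonicity step converting the PSD comparison into a $G$-criterion comparison, which is standard. The main obstacle I anticipate is bookkeeping: verifying that the particular constant in \citet{allen2021near}'s spectral guarantee under $N \geq 5d/\epsilon^2$ is small enough that the composed slack fits within $1+6\epsilon$ for all $\epsilon \in (0,1/3]$. If the constant from the Allen-Zhu bound is inconveniently large, a fallback plan is to invoke the simpler Pukelsheim rounding~\cite{pukelsheim2006optimal}, which gives the same qualitative $(1+O(\epsilon))$ guarantee under $r(\epsilon) \leq (d(d+1)+2)/\epsilon$; this weakens the sample-size regime but preserves the form of the conclusion and avoids chasing constants through the randomized swap-rounding analysis.
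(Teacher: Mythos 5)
Your proposal is correct and takes essentially the same route as the paper: the paper's entire proof is a one-line invocation of Theorem~2.1 of \citet{allen2021near} (with $k=N$, $n=N|\calX|$, and the continuous relaxation $\mathcal{C}_N$ of $\mathcal{S}_N$), which already delivers the $(1+6\epsilon)$ multiplicative guarantee on the $G$-criterion under $N\geq 5d/\epsilon^2$, so your intermediate spectral-comparison and constant-chasing steps (where, incidentally, $(1-c\epsilon)^{-1}\leq 1+6\epsilon$ for all $\epsilon\in(0,1/3]$ actually requires $c\leq 2$, not $c\leq 4$) are not needed.
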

\begin{proof}
This follows directly from Theorem 2.1 in \cite{allen2021near} letting $\mathcal{C}_{N} = \{ s \in [0,N]^{N|\calX|}: \sum_{i=1}^{N|\calX|} s_i \leq N \}$ be a continuous relaxation of $\mathcal{S}_{N}$ and setting $k=N$ and $n=N|\calX|$.
\end{proof}

Alternatively, one could use the robust inverse propensity score (RIPS) estimator~\cite{camilleri2021high}, which avoids the need for rounding schemes via robust mean estimation.

\section{Technical Preliminaries}
For our analysis, we do not assume that the error, $\eta$, in Eq.~\ref{eq:main_linear_model} is Gaussian. We generalize by extending to strictly sub-Gaussian noise, in which the variance $\sigma_x^2$ is equal to the optimal variance proxy for $\eta_t, \  \forall t \in \mathbb{N}$. This paradigm allows for noise distributions such as the symmetrized Beta, symmetrized Gamma, and Uniform.

The following definitions and propositions will be used in the proofs of Appendices~\ref{app_sec:VarEst}-\ref{app_sec:level_set}. 

\begin{definition}
A real-valued random variable $X$ is \emph{sub-Gaussian} if there exists a positive constant $\sigma^2$ such that
\[ \mathbb{E}\left (e^{\lambda X}\right ) \leq e^{\lambda^2\sigma^2/2} \]
for all $\lambda \geq 0$. 
\end{definition}

\begin{proposition}[Equation 2.9, \cite{wainwright2019high}] \label{SubGausBd}
Let $X$ be a sub-Gaussian random variable with sub-Gaussian parameter $\sigma^2$ and mean $\mathbb{E}(X) = \mu$. Then, for all $t > 0$, we have
\[ \mathbb{P}(|X - \mu| \geq t) \leq 2 e^{-\frac{t^2}{2\sigma^2}}. \]
\end{proposition}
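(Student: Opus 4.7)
The plan is to derive the tail bound via a standard Chernoff-type argument: apply Markov's inequality to an exponentiated version of $X-\mu$, invoke the sub-Gaussian moment generating function (MGF) bound, and optimize over the free dual parameter $\lambda$.

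For the upper tail, I would fix any $\lambda > 0$ and note that by Markov's inequality,
\[
\mathbb{P}(X - \mu \geq t) \;=\; \mathbb{P}\!\left(e^{\lambda(X-\mu)} \geq e^{\lambda t}\right) \;\leq\; e^{-\lambda t}\,\mathbb{E}\!\left[e^{\lambda(X-\mu)}\right].
\]
Applying the sub-Gaussian MGF bound to the centered variable $X - \mu$ (the convention matching the cited Wainwright reference) gives $\mathbb{E}[e^{\lambda(X-\mu)}] \leq e^{\lambda^2\sigma^2/2}$, so $\mathbb{P}(X - \mu \geq t) \leq \exp(\lambda^2\sigma^2/2 - \lambda t)$. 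Minimizing this convex quadratic exponent in $\lambda > 0$ at $\lambda^\ast = t/\sigma^2$ yields $\mathbb{P}(X - \mu \geq t) \leq \exp(-t^2/(2\sigma^2))$. For the lower tail, I would observe that $-(X - \mu)$ is sub-Gaussian with the same parameter $\sigma^2$ and run the identical argument, then take a union bound over the two tails to pick up the factor of $2$ in the final inequality.

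The calculation is textbook and presents no real obstacle; the only mild subtlety is that the paper's Definition states the MGF bound for $X$ itself and only for $\lambda \geq 0$, whereas the clean two-sided tail bound uses the MGF of the centered variable $X - \mu$ for all $\lambda \in \mathbb{R}$. This is the standard symmetric convention for sub-Gaussianity adopted in the cited source, so it is implicitly in force here, and the argument goes through without change.
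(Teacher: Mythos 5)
Your Chernoff-bound argument is correct and is exactly the standard derivation; the paper itself gives no proof of this proposition, importing it directly from the cited Wainwright text, whose proof is the same Markov-plus-MGF-optimization computation you present. Your remark about the paper's one-sided, uncentered statement of the sub-Gaussian MGF condition is a fair catch, and your resolution (the symmetric centered convention of the cited source is the one implicitly in force) is the right reading.
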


\begin{definition}
A real-valued random variable $X$ with mean $\mathbb{E}(X) = \mu$ is \emph{sub-exponential} if there are non-negative parameters $(\nu, \alpha)$ such that

\begin{equation*}
    \mathbb{E}[e^{\lambda(X-\mu)}] \leq e^{\frac{\nu^2 \lambda^2}{2}}, \ \forall |\lambda| < \frac{1}{\alpha}.
\end{equation*}

\end{definition}

\begin{proposition}[Proposition 2.9, \cite{wainwright2019high}]\label{SubExpBound}
Suppose that $X$ is sub-exponential with parameters $(\nu, \alpha)$ and mean $\mathbb{E}(X) = \mu$. Then, for $t> 0$,
    \[
    \mathbb{P}(X - \mu \geq t) \leq
\begin{cases}
    e^{\frac{-t^2}{\nu^2}},& \mathrm{if} \ \ 0 \leq t \leq \frac{\nu^2}{\alpha} \\
    e^{-\frac{t}{\alpha}},& \mathrm{if} \ \ t > \frac{\nu^2}{\alpha}.
\end{cases}
\]
This implies that for $\delta \in (0,1)$,
\begin{equation*}
    \mathbb{P} \left ( |X - \mu| <  \max \left \{ \sqrt{2\log(2/\delta) \nu^2}, 2 \log(2/\delta) \alpha  \right \} \right )  = 1-\delta.
\end{equation*}

\end{proposition}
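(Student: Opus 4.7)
The plan is to prove this two-regime sub-exponential tail bound through the standard Chernoff (Cramér) method combined with a case analysis that respects the constraint $|\lambda| < 1/\alpha$ implicit in the sub-exponential definition. First, I would invoke Markov's inequality applied to the exponential moment: for any $\lambda \in [0, 1/\alpha)$,
\[
\mathbb{P}(X - \mu \geq t) = \mathbb{P}(e^{\lambda(X-\mu)} \geq e^{\lambda t}) \leq e^{-\lambda t}\,\mathbb{E}\bigl[e^{\lambda(X-\mu)}\bigr] \leq \exp\!\Bigl(\tfrac{\nu^2 \lambda^2}{2} - \lambda t\Bigr),
\]
where the last step uses the defining sub-exponential bound on the MGF. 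The task then reduces to minimizing the exponent $g(\lambda) = \nu^2 \lambda^2/2 - \lambda t$ over the feasible interval $\lambda \in [0, 1/\alpha)$.

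Next I would perform the case split on $t$. The unconstrained minimizer is $\lambda^\star = t/\nu^2$. In the small-deviation regime $0 \leq t \leq \nu^2/\alpha$, we have $\lambda^\star \leq 1/\alpha$ so the unconstrained optimum is feasible; plugging in yields an exponent proportional to $-t^2/\nu^2$, which matches the first branch of the bound (up to the normalization convention used in the stated definition of sub-exponential). In the large-deviation regime $t > \nu^2/\alpha$, the unconstrained optimum lies outside the feasible set, so by convexity of $g$ the minimum on $[0, 1/\alpha)$ is attained at the boundary; taking $\lambda \uparrow 1/\alpha$ gives $g(1/\alpha) = \nu^2/(2\alpha^2) - t/\alpha$, which is upper bounded by an exponent proportional to $-t/\alpha$ precisely because $\nu^2/\alpha < t$ forces $\nu^2/(2\alpha^2) < t/(2\alpha)$. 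Combining the two regimes produces the piecewise bound stated.

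For the second (high-probability) claim, I would symmetrize: since $-X$ is sub-exponential with the same parameters $(\nu,\alpha)$, the one-sided bound applied to $X-\mu$ and to $-(X-\mu)$ combined via a union bound yields $\mathbb{P}(|X-\mu| \geq t) \leq 2\max\{e^{-t^2/\nu^2}, e^{-t/\alpha}\}$ when we use whichever branch is active at the given $t$. Setting this upper bound equal to $\delta$ and solving for $t$ in each branch gives the two candidate thresholds $t_1 = \sqrt{\nu^2 \log(2/\delta)}$ (adjusted by the constant factor $2$ in the statement to absorb the convention in the MGF bound) and $t_2 = \alpha \log(2/\delta)$ (similarly adjusted); taking the maximum of the two handles both regimes simultaneously, yielding the displayed radius.

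The main obstacle, and the only place where care is required, is the case boundary analysis: one must verify that the exponent attained at the constrained optimum $\lambda = 1/\alpha$ actually dominates the exponent at $t^2/(2\nu^2)$ whenever $t > \nu^2/\alpha$, and that the constants in the stated proposition are consistent with the (author's) chosen convention for the sub-exponential MGF bound. Once the normalization is fixed, the rest of the argument is a routine Cramér-Chernoff optimization plus a symmetrization and union bound; there is no probabilistic heavy lifting beyond the two MGF applications.
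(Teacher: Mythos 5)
This proposition is imported by the paper as a citation to Proposition~2.9 of Wainwright's book; the paper supplies no proof of its own, so there is nothing internal to compare against. Your argument is the standard Cram\'er--Chernoff proof of that result and it is correct in structure: Markov applied to $e^{\lambda(X-\mu)}$ for $\lambda\in[0,1/\alpha)$, minimization of $\nu^2\lambda^2/2-\lambda t$ with the case split according to whether the unconstrained minimizer $t/\nu^2$ is feasible, and then symmetrization plus a union bound at level $\delta/2$ per tail to get the two-sided radius $\max\{\sqrt{2\nu^2\log(2/\delta)},\,2\alpha\log(2/\delta)\}$. Your check that the boundary value $g(1/\alpha)=\nu^2/(2\alpha^2)-t/\alpha\le -t/(2\alpha)$ in the large-deviation regime is exactly the right step.

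One point you hedge on ("up to the normalization convention") deserves to be made explicit: the optimization you carry out yields the exponents $-t^2/(2\nu^2)$ and $-t/(2\alpha)$, i.e.\ the bounds $e^{-t^2/(2\nu^2)}$ and $e^{-t/(2\alpha)}$, which is what Wainwright actually states. The first display of the proposition as printed in the paper omits both factors of $2$ and therefore asserts a strictly stronger tail bound than this argument (or the cited source) delivers; this appears to be a transcription error in the paper, since the second display --- the one actually used downstream --- solves $e^{-t^2/(2\nu^2)}=\delta/2$ and $e^{-t/(2\alpha)}=\delta/2$ and is consistent with the correct constants. So your proof establishes the version with the $2$'s, which is the version the paper needs; it does not establish the first display as literally written, and you should say so rather than leave it as a convention issue. (Also, the concluding probability statement should read $\ge 1-\delta$ rather than $=1-\delta$, as your union-bound argument only gives an inequality.)
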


\begin{proposition}[Equation 37 in Appendix B, \cite{honorio2014tight}] \label{subGausSq}
Let $X$ be a centered sub-Gaussian random variable with sub-Gaussian parameter $\sigma^2$. Then $X^2$ is sub-exponential with parameters $(\nu = 4\sigma^2 \sqrt{2}, \alpha = 4\sigma^2)$. 
\end{proposition}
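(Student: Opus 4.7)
The plan is to establish the centered sub-exponential MGF bound $\mathbb{E}[e^{\lambda(X^2 - \mu)}] \leq e^{\nu^2\lambda^2/2}$ (with $\mu := \mathbb{E}[X^2]$) for $|\lambda| < 1/\alpha$ by combining a moment bound on $X$ derived from the sub-Gaussian tail with a Taylor expansion of $e^{\lambda X^2}$. The overall chain is: tail bound $\to$ moments of $X$ $\to$ rational MGF bound for $X^2$ $\to$ exponential upper bound on a restricted range.

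First I would derive $\mathbb{E}[X^{2k}] \leq 2\cdot k!\cdot(2\sigma^2)^k$ for every $k \geq 1$. This uses the layer-cake identity $\mathbb{E}[X^{2k}] = \int_0^\infty 2k\,t^{2k-1}\,\mathbb{P}(|X| \geq t)\,dt$, the tail bound of Proposition~\ref{SubGausBd}, and the substitution $u = t^2/(2\sigma^2)$, which reduces the integral to a Gamma function; in particular $\mu \leq 4\sigma^2$. Plugging these moment bounds into the Taylor series of $e^{\lambda X^2}$ and summing the resulting geometric tail yields
\begin{equation*}
\mathbb{E}[e^{\lambda X^2}] \;\leq\; 1 + \lambda\mu + \sum_{k=2}^\infty 2\,(2|\lambda|\sigma^2)^k \;=\; 1 + \lambda\mu + \frac{8\lambda^2\sigma^4}{1 - 2|\lambda|\sigma^2}
\end{equation*}
on $|\lambda| < 1/(2\sigma^2)$. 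Restricting further to $|\lambda| \leq 1/(4\sigma^2)$ keeps the denominator above $1/2$, so the remainder is at most $16\lambda^2\sigma^4$. Multiplying by $e^{-\lambda\mu}$ and applying the elementary inequality $e^{-y}(1+y) \leq 1$ (for $y > -1$) absorbs the linear term $\lambda\mu$, and a final use of $1 + x \leq e^x$ upgrades the remaining quadratic remainder to an exponential, yielding $\mathbb{E}[e^{\lambda(X^2-\mu)}] \leq \exp(C\sigma^4\lambda^2)$ for an absolute constant $C$.

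The main obstacle is tuning these constants to land exactly on $\nu^2 = 32\sigma^4$ (i.e., $\nu = 4\sigma^2\sqrt{2}$) with threshold $\alpha = 4\sigma^2$, rather than a larger absolute multiple of $\sigma^4$. Three sources of slack compound in the sketch above: the geometric bound $8\lambda^2\sigma^4/(1 - 2|\lambda|\sigma^2)$, the factor $e^{|\lambda|\mu}$ picked up when passing $e^{-\lambda\mu}$ through the quadratic remainder, and the bound $\mu \leq 4\sigma^2$, which is not tight for strictly sub-Gaussian $X$. The sharper result cited from~\cite{honorio2014tight} likely removes one of these by bounding the centered moments $\mathbb{E}[(X^2-\mu)^k]$ directly (rather than via the triangle inequality on $(X^2 - \mu)^k$), or by a decoupling identity $X^2 - (X')^2 = (X-X')(X+X')$ with an independent copy $X'$ and Rademacher conditioning that bypasses the centering step; either route produces the quadratic-in-$\lambda$ exponent with the sharp constant on the range $|\lambda| < 1/(4\sigma^2) = 1/\alpha$.
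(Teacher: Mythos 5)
The paper does not prove this proposition at all: it is imported verbatim from \citet{honorio2014tight} (their Equation 37), so there is no in-paper argument to compare against. Your sketch is therefore a from-scratch derivation, and as a route to the \emph{qualitative} statement (the square of a centered sub-Gaussian is sub-exponential with parameters of order $\sigma^2$) it is sound: the layer-cake moment bound $\mathbb{E}[X^{2k}]\le 2\,k!\,(2\sigma^2)^k$ is correct, the geometric summation of the Taylor tail on $|\lambda|<1/(2\sigma^2)$ is correct, and restricting to $|\lambda|\le 1/(4\sigma^2)$ does give a quadratic-in-$\lambda$ exponent. One small technical wrinkle: the step where $e^{-y}(1+y)\le 1$ ``absorbs the linear term'' does not apply directly, because the remainder $16\lambda^2\sigma^4$ is additive rather than multiplicative; you need $1+\lambda\mu\le e^{\lambda\mu}$ and then must control $e^{-\lambda\mu}\cdot 16\lambda^2\sigma^4$ for $\lambda<0$, which costs a further factor $e^{|\lambda|\mu}\le e$ on the range $|\lambda|\le 1/(4\sigma^2)$, $\mu\le 4\sigma^2$. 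This is fixable and you already flag the constant loss.

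The genuine gap is exactly the one you concede: the proposition asserts the \emph{specific} parameters $\nu=4\sqrt{2}\sigma^2$ (i.e.\ $\nu^2=32\sigma^4$) and $\alpha=4\sigma^2$, and your argument only delivers $\mathbb{E}[e^{\lambda(X^2-\mu)}]\le e^{C\sigma^4\lambda^2}$ for an unspecified absolute constant $C$ (roughly $32e$ after the corrections above). Since the appendix of this paper threads these exact constants through its explicit bounds (e.g.\ the $(4\sqrt{2}\gamma^2,4\gamma^2)$ parameters in Proposition~\ref{sampvar} and the $(4\sqrt{2}\sigma_{\max}^2,4\sigma_{\max}^2)$ parameters used to bound quantity $\mathbf{B}$ in the proof of Theorem~\ref{ConcBoundOLS}), a proof with a weaker constant would force those explicit constants to be revised, even though all the final $\mathcal{O}(\cdot)$ statements would survive unchanged. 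To actually close the gap you would need to reproduce the sharper computation of \citet{honorio2014tight}, e.g.\ by bounding the centered moments $\mathbb{E}[(X^2-\mu)^k]$ directly rather than centering after the fact, as you suggest.
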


\begin{proposition}[Equation 2.18, \cite{wainwright2019high}] \label{SubExpAdd}

Suppose that $X_i$ for $i \in \{1,2, \ldots, n \}$ are sub-exponential with parameters $(\alpha_i, \nu_i)$ such that $\mathbb{E}(X_i) = \mu_i$. Then, $\sum_{i=1}^n (X_i- \mu_i)$ is sub-exponential with parameters $(\nu_*, \alpha_*)$ such that
\begin{equation*}
    \alpha_* = \max_{i = 1, 2, \ldots, n} \alpha_i \ \mathrm{and} \ \  \nu_* := \sqrt{\sum_{i=1}^n \nu_i^2}.
\end{equation*}
    
\end{proposition}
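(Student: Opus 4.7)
The plan is to prove this via the standard moment generating function (MGF) characterization of sub-exponential random variables, assuming (as is implicit in the cited Wainwright 2.18) that the $X_i$ are independent. The workflow has three short steps: (i) write the MGF of the centered sum as a product by independence; (ii) apply each per-coordinate sub-exponential MGF bound on the correct shared domain of $\lambda$; (iii) read off the parameters $(\nu_*, \alpha_*)$ from the resulting exponent.

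In more detail, fix $\lambda \in \mathbb{R}$ with $|\lambda| < 1/\alpha_*$, where $\alpha_* = \max_{i} \alpha_i$. The key observation that drives the choice of $\alpha_*$ is that $1/\alpha_* = \min_{i} 1/\alpha_i$, so the single constraint $|\lambda| < 1/\alpha_*$ simultaneously implies $|\lambda| < 1/\alpha_i$ for every $i$. Consequently, the per-coordinate hypothesis $\mathbb{E}[e^{\lambda(X_i - \mu_i)}] \leq e^{\nu_i^2 \lambda^2 / 2}$ is valid for each $i$ at this $\lambda$. By independence of the $X_i$,
\begin{equation*}
\mathbb{E}\Bigl[\exp\Bigl(\lambda \sum_{i=1}^{n}(X_i - \mu_i)\Bigr)\Bigr] = \prod_{i=1}^{n} \mathbb{E}\bigl[e^{\lambda(X_i - \mu_i)}\bigr] \leq \prod_{i=1}^{n} e^{\nu_i^2 \lambda^2 / 2} = \exp\!\Bigl(\tfrac{\lambda^2}{2}\sum_{i=1}^{n}\nu_i^2\Bigr) = e^{\nu_*^2 \lambda^2 / 2}.
\end{equation*}
This is exactly the defining MGF bound for sub-exponentiality with parameters $(\nu_*, \alpha_*)$ applied to the centered sum $\sum_i (X_i - \mu_i)$, which has mean zero.

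There is essentially no technical obstacle here; the proof is a direct MGF-factorization argument, and the only subtle points are bookkeeping. The first is making sure the domain in $\lambda$ used in the product step is the intersection of the individual domains, which forces $\alpha_*$ to be the \emph{maximum} rather than the minimum of the $\alpha_i$. The second is noting that independence is required for the MGF to factor; without it one would instead use a Cauchy--Schwarz or Hölder-style bound and obtain worse (and less clean) parameters. Assuming independence, as in the Wainwright reference, both requirements are met and the displayed inequality above completes the proof.
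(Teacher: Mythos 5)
The paper states this proposition as a cited result (Equation 2.18 of Wainwright) and gives no proof of its own, so there is nothing to diverge from: your MGF-factorization argument is exactly the standard proof underlying the citation, and it is correct. Your two bookkeeping points are the right ones to flag --- in particular, the independence hypothesis is indeed implicit in the paper's statement but is satisfied everywhere the proposition is invoked (e.g., the residual terms summed in Quantity $\mathbf{B}$ of the \texttt{HEAD} analysis are independent across $t$ under the fixed design), and taking $\alpha_* = \max_i \alpha_i$ is precisely what makes the single constraint $|\lambda| < 1/\alpha_*$ valid for every factor in the product.
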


\begin{definition}
    A \emph{G-optimal design} \cite{kiefer1959optimum}, $\lambda^* \in P_{\mathcal{X}}$, for a set of arms $\mathcal{X} \subset \mathbb{R}^d$ is such that
    $$\lambda^* = \arg \min_{\lambda \in P_{\mathcal{X}}} \max_{x \in \calX} x^\top \left ( \sum_{x \in \calX} \lambda_x xx^\top   \right )^{-1}x.$$
\end{definition}

\begin{proposition}[Lemma 4.6, \cite{kiefer1959optimum}] \label{KW}

For any finite $\mathcal{X} \subset \mathbb{R}^d$ with $\mathrm{span}(\calX) = \mathbb{R}^d$,
\begin{equation*}
   d = \min_{\lambda \in P_{\mathcal{X}}} \max_{x \in \calX} x^\top \left ( \sum_{x \in \calX} \lambda_x xx^\top   \right )^{-1}x. 
\end{equation*}
This implies that if we sample each arm $x \in \mathcal{X}$ $ \lceil \tau \lambda^*_x \rceil$ times, then
\begin{align*}
 \max_{x \in \calX} x^\top \left ( \sum_{x \in \calX}  \lceil \tau \lambda^*_x \rceil xx^\top   \right )^{-1}x 
 & \leq  \frac{1}{\tau} \max_{x \in \calX} x^\top \left ( \sum_{x \in \calX}  \lambda^*_x xx^\top   \right )^{-1}x = \frac{d}{\tau}.
\end{align*}

\end{proposition}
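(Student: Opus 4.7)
The plan is to prove the equality in two directions and then derive the ceiling bound via a monotonicity argument on the information matrix. Write $M(\lambda) := \sum_{x \in \calX} \lambda_x xx^{\top}$, which is positive definite for any $\lambda \in P_{\calX}$ with full support (and more generally whenever the supporting vectors span $\mathbb{R}^d$, which we can ensure by restricting attention to such $\lambda$, since $\mathrm{span}(\calX) = \mathbb{R}^d$). Define $g(\lambda) = \max_{x \in \calX} x^{\top} M(\lambda)^{-1} x$.

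For the lower bound $\min_\lambda g(\lambda) \geq d$, I would use a one-line trace identity: for any feasible $\lambda$,
\begin{equation*}
\sum_{x \in \calX} \lambda_x\, x^{\top} M(\lambda)^{-1} x = \mathrm{tr}\Bigl(M(\lambda)^{-1}\sum_{x \in \calX}\lambda_x xx^{\top}\Bigr) = \mathrm{tr}(I_d) = d,
\end{equation*}
so $g(\lambda)$, as a maximum of values whose convex combination under $\lambda$ equals $d$, must be at least $d$. The matching upper bound $\min_\lambda g(\lambda) \leq d$ is the substantive direction and is the classical Kiefer--Wolfowitz equivalence. I would obtain it by invoking D-optimality: let $\lambda^{\star}$ maximize $\log\det M(\lambda)$ on the (compact, convex) simplex $P_{\calX}$, which is attained since the objective is continuous and concave and its value at any full-support design is finite. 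Computing the directional derivative of $\log\det M(\lambda)$ at $\lambda^{\star}$ toward the vertex $e_x$ gives $x^{\top} M(\lambda^{\star})^{-1} x - d$, and first-order optimality forces this quantity to be $\leq 0$ for every $x \in \calX$ and $=0$ on the support of $\lambda^{\star}$. Hence $g(\lambda^{\star}) \leq d$, which combined with the lower bound yields equality. This derivative computation, together with the care needed to ensure $M(\lambda^{\star})$ is invertible, is where I expect to spend the most effort; the rest is bookkeeping.

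For the ceiling rounding corollary, I would argue by Loewner monotonicity of the matrix inverse on positive definite matrices. Since $\lceil \tau \lambda^{\star}_x\rceil \geq \tau \lambda^{\star}_x$ for every $x$, summing gives
\begin{equation*}
\sum_{x \in \calX} \lceil \tau \lambda^{\star}_x\rceil\, xx^{\top} \ \succeq\ \tau \sum_{x \in \calX} \lambda^{\star}_x\, xx^{\top} \ =\ \tau\, M(\lambda^{\star}).
\end{equation*}
Since both sides are positive definite (the right side by the support assumption on $\lambda^{\star}$, the left by domination), inverting flips the order: $\bigl(\sum_x \lceil \tau \lambda^{\star}_x\rceil xx^{\top}\bigr)^{-1} \preceq \tfrac{1}{\tau} M(\lambda^{\star})^{-1}$. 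Sandwiching by any $x \in \calX$ and taking the maximum gives
\begin{equation*}
\max_{x \in \calX} x^{\top}\Bigl(\sum_{x' \in \calX}\lceil \tau \lambda^{\star}_{x'}\rceil x'{x'}^{\top}\Bigr)^{-1} x \ \leq\ \frac{1}{\tau} \max_{x \in \calX} x^{\top} M(\lambda^{\star})^{-1} x \ =\ \frac{d}{\tau},
\end{equation*}
where the final equality invokes the equivalence theorem just established at the optimizer $\lambda^{\star}$. The main technical obstacle is the equivalence theorem itself; the rounding corollary is then a short Loewner-order argument.
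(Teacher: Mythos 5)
Your proposal is correct. Note, however, that the paper does not actually prove the equality $d = \min_{\lambda}\max_{x} x^{\top}M(\lambda)^{-1}x$: it imports it wholesale as Lemma 4.6 of Kiefer and Wolfowitz, and the only argument the paper supplies for this proposition is the one-line rounding chain displayed in the statement itself, which is exactly your Loewner-monotonicity step ($\lceil \tau\lambda^{*}_x\rceil \geq \tau\lambda^{*}_x$ implies $\sum_x\lceil\tau\lambda^{*}_x\rceil xx^{\top}\succeq \tau M(\lambda^{*})$, then invert and sandwich). What you add beyond the paper is a self-contained proof of the equivalence theorem: the trace identity $\sum_x\lambda_x x^{\top}M(\lambda)^{-1}x=\mathrm{tr}(I_d)=d$ for the lower bound, and the first-order optimality condition of the D-optimal design (directional derivative of $\log\det M$ toward a vertex equals $x^{\top}M(\lambda^{\star})^{-1}x-d\leq 0$) for the upper bound. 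This is the standard textbook proof and all steps check out, including your handling of the two invertibility issues (the D-optimal maximizer is nonsingular because $\log\det$ is $-\infty$ on singular designs and finite on a full-support design, and the rounded information matrix is nonsingular because it dominates $\tau M(\lambda^{\star})\succ 0$). The trade-off is simply self-containedness versus brevity: the paper treats the equivalence theorem as a black box, while your writeup would make the preliminaries independent of the citation at the cost of about a page.
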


\begin{definition}
    The \emph{Forbenius norm} of $A \in \mathbb{R}^{n\times m}$ is
    \begin{equation*}
        \| A \|_F =  \sqrt{\mathrm{tr}(A^\top A)}.
    \end{equation*}
\end{definition}

\begin{definition}
    The \emph{Spectral norm} of $A \in \mathbb{R}^{n\times m}$ is
    \begin{equation*}
        \| A \|_2 = \sup_{x \neq 0} \frac{\|Ax \|_2}{\|x\|_2}.
    \end{equation*}
\end{definition}

\begin{proposition}[Matrix Norm Properties]\label{MatNorm}
For $A \in \mathbb{R}^{n\times m}$, we have sub-multiplicativity for the Frobenius norm:
$$\|A^\top A \|_F \leq \|A\|_F^2,$$
and a bound on the Spectral norm:
$$\|A \|_2 \leq \|A \|_F.$$
\end{proposition}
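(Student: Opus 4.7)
The plan is to establish both inequalities via the standard singular value (or entrywise) reductions, since both are classical facts.

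For the sub-multiplicativity claim $\|A^\top A\|_F \le \|A\|_F^2$, I would first prove the more general statement $\|BC\|_F \le \|B\|_F \|C\|_F$ for any conformable $B, C$ and then specialize to $B = A^\top$ and $C = A$ (using $\|A^\top\|_F = \|A\|_F$, which is immediate from the definition $\|M\|_F^2 = \sum_{i,j} M_{ij}^2 = \mathrm{tr}(M^\top M) = \mathrm{tr}(MM^\top)$). To prove the general inequality, write the $(i,j)$ entry of $BC$ as $(BC)_{ij} = \sum_k B_{ik} C_{kj}$ and apply Cauchy–Schwarz to obtain $|(BC)_{ij}|^2 \le \bigl(\sum_k B_{ik}^2\bigr)\bigl(\sum_k C_{kj}^2\bigr)$. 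Summing over $i$ and $j$ factors the right-hand side into $\|B\|_F^2 \|C\|_F^2$.

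For the spectral bound $\|A\|_2 \le \|A\|_F$, I would use the singular value decomposition $A = U \Sigma V^\top$ with singular values $\sigma_1 \ge \sigma_2 \ge \cdots \ge \sigma_r \ge 0$. Since orthogonal transformations preserve both norms, $\|A\|_2 = \sigma_1$ and $\|A\|_F^2 = \mathrm{tr}(A^\top A) = \sum_i \sigma_i^2$. The inequality then reduces to the trivial fact $\sigma_1^2 \le \sum_{i} \sigma_i^2$, so $\|A\|_2 = \sigma_1 \le \sqrt{\sum_i \sigma_i^2} = \|A\|_F$. Alternatively, and avoiding SVD, for any unit vector $x$ write $\|Ax\|_2^2 = \sum_i (\sum_j A_{ij} x_j)^2 \le \sum_i \bigl(\sum_j A_{ij}^2\bigr)\|x\|_2^2 = \|A\|_F^2$ by Cauchy–Schwarz, then take the supremum over unit $x$.

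Neither piece is difficult; the only real decision is exposition. I would present the entrywise Cauchy–Schwarz argument for both inequalities in order to keep the proof self-contained and avoid invoking the SVD, which is not otherwise used in the paper. No step should present a genuine obstacle — this proposition is cited purely as a technical lemma for later appendix computations.
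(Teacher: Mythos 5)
Your proof is correct: the entrywise Cauchy--Schwarz argument gives $\|BC\|_F \le \|B\|_F\|C\|_F$ (hence $\|A^\top A\|_F \le \|A\|_F^2$), and either the SVD or the same Cauchy--Schwarz bound on $\|Ax\|_2$ gives $\|A\|_2 \le \|A\|_F$. The paper states this proposition without proof as a standard linear-algebra fact, so there is nothing to compare against; your argument is the standard one and fills the gap cleanly.
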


\begin{proposition}[Corollary 4.7, \cite{zhang2020concentration}]\label{QuadForm}
Let $\xi_1, \ldots , \xi_n$ be zero-mean $\sigma^2$-sub-Gaussian and $\Delta \in \mathbb{R}^{n\times n}$. For any $t >  0$, 
\begin{equation*}
    P \left ( \xi^{\top}\Delta \xi \geq \sigma^2[ \mathrm{tr}(\Delta) + 2\mathrm{tr}(\Delta^2t)^{1/2} + 2\|\Delta\|_2 t]  \right ) \leq e^{-t},
\end{equation*}
which implies that with probability $1-\delta$ for $\delta \in (0,1)$,
\begin{equation*}
    \xi^{\top}\Sigma \xi \leq \sigma^2[ \mathrm{tr}(\Delta) + 2\mathrm{tr}(\Delta^2 \log(1/\delta) )^{1/2} + 2\|\Delta\|_2 \log(1/\delta)].
\end{equation*}

\end{proposition}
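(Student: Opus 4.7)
The plan is to prove this Hanson--Wright type inequality by bounding the moment generating function (MGF) of $\xi^{\top}\Delta\xi - \sigma^2\mathrm{tr}(\Delta)$ on a bounded interval and then applying a Chernoff argument. First I would reduce to symmetric $\Delta$: since $\xi^{\top}\Delta\xi = \xi^{\top}(\Delta+\Delta^{\top})\xi/2$, one may replace $\Delta$ by $A=(\Delta+\Delta^{\top})/2$, which leaves $\mathrm{tr}(\Delta)$ unchanged and only decreases $\|\Delta\|_2$ (by the triangle inequality for the spectral norm) and $\mathrm{tr}(A^{\top}A)=\mathrm{tr}(A^2)$ relative to $\mathrm{tr}(\Delta^{\top}\Delta)$, so the bound on the symmetric case implies the general case. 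With $\Delta$ symmetric, decompose $\xi^{\top}\Delta\xi = S_d+S_o$ where $S_d=\sum_i \Delta_{ii}\xi_i^2$ is the diagonal part and $S_o=\sum_{i\neq j}\Delta_{ij}\xi_i\xi_j$ is the off-diagonal part, and treat them separately.

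For $S_d$, Proposition~\ref{subGausSq} gives that each $\xi_i^2$ is sub-exponential with parameters $(4\sigma^2\sqrt 2, 4\sigma^2)$, and under the strictly sub-Gaussian convention adopted in the technical preliminaries one has $\mathbb{E}[\xi_i^2]=\sigma^2$. Independence of the $\xi_i$ together with Proposition~\ref{SubExpAdd} then yield that $S_d-\sigma^2\mathrm{tr}(\Delta)$ is sub-exponential with parameters $\nu_d=\mathcal{O}(\sigma^2\sqrt{\sum_i \Delta_{ii}^2})$ and $\alpha_d=\mathcal{O}(\sigma^2\max_i|\Delta_{ii}|)$, both of which are dominated by $\sigma^2\sqrt{\mathrm{tr}(\Delta^2)}$ and $\sigma^2\|\Delta\|_2$ respectively, giving a tail term consistent with the target.

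For $S_o$ I would invoke the standard decoupling inequality (Vershynin, \emph{High-Dimensional Probability}, Theorem 6.1.1): there is an absolute constant $C$ so that for every $\lambda\in\mathbb{R}$, $\mathbb{E}\exp(\lambda S_o)\leq \mathbb{E}\exp(C\lambda\,\xi^{\top}\Delta\xi')$ with $\xi'$ an independent copy of $\xi$. Conditioning on $\xi$ and using that $\xi'^{\top}(\Delta\xi)$ is $\sigma^2\|\Delta\xi\|_2^2$-sub-Gaussian in $\xi'$ gives $\mathbb{E}[\exp(C\lambda\,\xi^{\top}\Delta\xi')\mid\xi]\leq \exp(C^2\lambda^2\sigma^2\,\xi^{\top}\Delta^2\xi/2)$. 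Taking expectation over $\xi$ reduces to bounding the MGF of another quadratic form in a positive semidefinite matrix $\Delta^2$ with $\mathrm{tr}(\Delta^2)$ and spectral norm $\|\Delta\|_2^2$. Combining this with the diagonal MGF estimate gives a joint bound $\mathbb{E}\exp(\lambda(\xi^{\top}\Delta\xi-\sigma^2\mathrm{tr}(\Delta)))\leq \exp(c_1\sigma^4\lambda^2\mathrm{tr}(\Delta^2))$ valid for $|\lambda|\leq c_2/(\sigma^2\|\Delta\|_2)$. Applying Chernoff with this MGF and optimizing $\lambda$ over the permissible range yields a mixed sub-Gaussian/sub-exponential tail whose inversion at confidence $e^{-t}$ gives the claimed deviation $\sigma^2\mathrm{tr}(\Delta)+2\sigma^2\sqrt{t\,\mathrm{tr}(\Delta^2)}+2\sigma^2\|\Delta\|_2 t$.

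The main obstacle is the recursive control of the decoupled MGF $\mathbb{E}\exp(c\lambda^2\sigma^2\,\xi^{\top}\Delta^2\xi)$: this is itself a quadratic-form MGF and must be bounded in terms of $\mathrm{tr}(\Delta^2)$ and $\|\Delta\|_2^2$ without incurring an unwanted dimensional factor. In the Gaussian case rotation invariance makes this immediate, but for general sub-Gaussian $\xi$ one needs either to iterate the decoupling/Chernoff loop to a fixed point or to invoke a direct PSD-quadratic-form MGF estimate (e.g. via eigenvalue truncation and a Legendre-transform argument). Tracking the universal constants carefully so that the final bound reproduces the explicit factors of $2$ in front of both deviation terms is where the bookkeeping is most delicate.
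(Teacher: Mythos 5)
The paper does not prove this proposition at all: it is imported verbatim as Corollary 4.7 of \cite{zhang2020concentration} and used as a black box (in the bound on quantity $\mathbf{C}$ in the proof of Theorem~\ref{ConcBoundOLS}, where the relevant $\Delta = D_z X(V^{\top}V)^{-1}X^{\top}D_z$ is positive semidefinite). So there is no in-paper argument to compare against, and your proposal has to stand on its own.

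On its own terms, your route proves a statement of the right \emph{shape} but cannot deliver the statement as written, and the gap is structural rather than the ``delicate bookkeeping'' you flag at the end. The stated tail has the explicit constants $2\sqrt{t\,\mathrm{tr}(\Delta^2)}+2\|\Delta\|_2 t$, which are sharp (they reduce to the Laurent--Massart $\chi^2$ bound in the Gaussian case). The decoupling inequality you invoke already costs an absolute constant $C$ (equal to $4$ in Vershynin's Theorem 6.1.1) inside the exponent; after conditioning on $\xi$ this becomes $C^2$ multiplying $\lambda^2$, and the recursive MGF bound for $\mathbb{E}\exp(c\lambda^2\sigma^2\,\xi^{\top}\Delta^2\xi)$ plus the recombination of the diagonal and off-diagonal pieces degrade the constants further. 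No amount of careful tracking recovers the factor $2$; decoupling-based Hanson--Wright arguments intrinsically yield unspecified universal constants. The way the literature obtains the exact constants (Hsu--Kakade--Zhang, and Corollary 4.7 of \cite{zhang2020concentration}) is different: restrict to $\Delta\succeq 0$, write $\Delta=\Sigma^{\top}\Sigma$, introduce an auxiliary standard Gaussian vector $g$ independent of $\xi$, and evaluate $\mathbb{E}\,\exp\bigl(\mu\langle g,\Sigma\xi\rangle\bigr)$ two ways --- once integrating over $g$ to recover the MGF of the quadratic form, and once integrating over $\xi$ using sub-Gaussianity to get an explicit Gaussian integral in $g$ whose eigen-decomposition yields exactly $\mathrm{tr}(\Delta)+2\sqrt{t\,\mathrm{tr}(\Delta^2)}+2\|\Delta\|_2 t$ after a Chernoff step. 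If you are content with a bound of the form $c_1\sqrt{t}\,\|\Delta\|_F+c_2 t\|\Delta\|_2$ for unspecified absolute constants (which would in fact suffice for the way the proposition is used in Theorem~\ref{ConcBoundOLS}, up to constants in $C_{\Gamma,\delta}$), your outline is workable; to prove the proposition as stated you need the Gaussian-comparison argument, not decoupling.
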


\begin{proposition}[Equation 20.3, \cite{lattimore2020bandit}]\label{latconc}
Assume the setup of Eq.~\ref{eq:main_linear_model} and that a data set of size $\Gamma$,  $\mathcal{D} = \{x_t, y_t\}_{t=1}^{\Gamma}$, has been collected through a fixed design. Define $V_\Gamma = \sum_{t=1}^\Gamma x_t x_t^\top$ and construct the least squares estimator for $\mathcal{D}$, $\widehat{\theta}_{\Gamma} = V_\Gamma^{-1}\left ( \sum_{t=1}^\Gamma x_t y_t \right )$. Then, with probability $1-\delta$ for $\delta \in (0,1)$,
\begin{equation*}
     \| \widehat{\theta}_\Gamma - \theta^* \|_{V_\Gamma} \leq 2\sqrt{2 \left [ d\log(6) + \log(1/\delta)\right ]}.
\end{equation*}

\end{proposition}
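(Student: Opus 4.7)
The plan is to prove this fixed-design least-squares concentration bound in the Mahalanobis-type norm $\|\cdot\|_{V_\Gamma}$ via the standard covering-net argument, treating the noise as strictly sub-Gaussian with parameter $\sigma_{x_t}^2 = x_t^\top \Sigma^\ast x_t$ (the paper's convention) normalized so that $\sigma_{x_t}^2 \leq 1$; the general case follows by rescaling.

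First, I would write the estimation error in closed form. From the normal equations, $\widehat{\theta}_\Gamma - \theta^\ast = V_\Gamma^{-1} S$ where $S := \sum_{t=1}^\Gamma x_t \eta_t$, and hence
\begin{equation*}
\|\widehat{\theta}_\Gamma - \theta^\ast\|_{V_\Gamma}^2 \;=\; S^\top V_\Gamma^{-1} S \;=\; \|V_\Gamma^{-1/2} S\|_2^2.
\end{equation*}
So it suffices to produce a high-probability upper bound on the Euclidean norm of the $d$-dimensional random vector $W := V_\Gamma^{-1/2} S$.

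Next I would apply a covering argument. Pick a $1/2$-net $\mathcal{C}$ of the unit sphere $S^{d-1}$; a standard volumetric argument gives $|\mathcal{C}| \leq 6^d$. Any vector $w \in \mathbb{R}^d$ satisfies $\|w\|_2 \leq 2 \max_{u \in \mathcal{C}} u^\top w$, so $\|W\|_2 \leq 2\max_{u \in \mathcal{C}} u^\top W$. For each fixed $u$, the scalar $u^\top W = \sum_{t=1}^\Gamma (u^\top V_\Gamma^{-1/2} x_t)\,\eta_t$ is a weighted sum of independent mean-zero sub-Gaussian noise variables with variance proxy $\sigma_{x_t}^2$. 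Its sub-Gaussian parameter is therefore
\begin{equation*}
\sum_{t=1}^\Gamma (u^\top V_\Gamma^{-1/2} x_t)^2 \sigma_{x_t}^2 \;\leq\; \sum_{t=1}^\Gamma (u^\top V_\Gamma^{-1/2} x_t)^2 \;=\; u^\top V_\Gamma^{-1/2}\Bigl(\sum_t x_t x_t^\top\Bigr) V_\Gamma^{-1/2} u \;=\; \|u\|_2^2 \;=\; 1,
\end{equation*}
using the normalization $\sigma_{x_t}^2 \leq 1$ together with $\sum_t x_t x_t^\top = V_\Gamma$.

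The finish is routine. By Proposition \ref{SubGausBd}, each $u^\top W$ has tail $P(|u^\top W| \geq t) \leq 2e^{-t^2/2}$. A union bound over $\mathcal{C}$ gives
\begin{equation*}
P\Bigl(\max_{u \in \mathcal{C}} u^\top W \geq t\Bigr) \;\leq\; 6^d\, e^{-t^2/2},
\end{equation*}
and setting the right-hand side to $\delta$ yields $t = \sqrt{2[d\log(6) + \log(1/\delta)]}$. Combining with $\|W\|_2 \leq 2\max_{u \in \mathcal{C}} u^\top W$ gives the claimed inequality.

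The only real obstacle is tracking constants: getting the net cardinality bound $|\mathcal{C}| \leq 6^d$ (which requires verifying the standard $(1+2/\varepsilon)^d$ volume-ratio bound at $\varepsilon = 1/2$, slightly loosened) and the factor of $2$ from the $1/2$-net approximation. Everything else is a direct combination of the closed-form for the LS residual, strict sub-Gaussianity of the noise, and a union bound. Since this proposition is quoted verbatim from \cite{lattimore2020bandit}, one can alternatively simply cite that reference rather than reproduce the covering argument.
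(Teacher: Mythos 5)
Your covering-net argument is correct and is precisely the standard derivation of this bound from the cited source; the paper itself offers no proof, stating the result as a direct quotation of Equation 20.3 of \cite{lattimore2020bandit}. Your explicit flag of the unit-variance normalization $\sigma_{x_t}^2\leq 1$ is the right reading of the statement, since the paper reinstates the $\sigma_{\max}^2$ factor when it invokes this proposition in bounding Quantity $\mathbf{C}$ of Theorem~\ref{ConcBoundOLS}.
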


\section{Baseline Variance Estimation Procedures}

These are the algorithms we reference in the theoretical and empirical comparisons of Section \ref{sec:var_linear_estimator}. 

\textbf{The Uniform Estimator.} The Uniform Estimator draws arms uniformly at random and uses all samples to construct estimators  $\widehat{\theta}_\Gamma$ of $\theta^*$ and
$\widehat{\Sigma}_{\Gamma}$ of $\Sigma^*$.

\begin{algorithm}[ht]
\SetAlgoLined
\DontPrintSemicolon
\KwResult{Find $\widehat{\Sigma}^{\mathrm{UE}}_\Gamma$\; }
\textbf{Input:} Arms $\mathcal{X} \in \mathbb{R}^d$ and $\Gamma \in \mathbb{N}$\;

Sample arms uniformly at random\; 

Define $A^* = \sum_{t=1}^{\Gamma} x_t x_t^\top$ and $b^* = \sum_{t=1}^{\Gamma} x_t y_t$ and estimate $\widehat{\theta}_{\Gamma} = {A^*}^{-1} b^*$.

Define $A^\dagger = \sum_{t=1}^{\Gamma} \phi_{x_t} \phi_{x_t}^\top$ and $b^\dagger = \sum_{t=1}^{\Gamma} \phi_{x_t} \left (y_t - x_t^\top \widehat{\theta}_{\Gamma} \right )^2$ and estimate $\widehat{\Sigma}^{\mathrm{UE}}_{\Gamma} = {A^\dagger}^{-1} b^\dagger$.

\caption{Uniform Estimator of Heteroskedastic Variance}
\label{alg_var:HetEstWhite}
\end{algorithm}

\textbf{The Separate Arm Estimator.} Define $\mathcal{U}$ as the set of subsets of $\mathcal{W}$ of size $M = d(d+1)/2$, and for $U \in \mathcal{U}$ construct $\Phi_U$ such that its rows are composed of $\phi_x \in U$. Let $\zeta_{\min}(A)$ be the minimum singular value of matrix $A \in \mathbb{R}^{d\times d}$.
The Seperate Arm Estimator picks a set of $M$ arms such that
\begin{equation*}
    U^* = \arg \max_{U \in \mathcal{U}} \zeta_{\min}(\Phi_U^{-1}).
\end{equation*}
It then splits $\Gamma$ samples evenly between these arms, estimates the sample variance for each and finds the least squares estimator $\widehat{\Sigma}^{\mathrm{SA}}_\Gamma$. 

\begin{algorithm}[ht]
\SetAlgoLined
\DontPrintSemicolon
\KwResult{Find $\widehat{\Sigma}_\Gamma^{\mathrm{SA}}$\; }
\textbf{Input:} Arms $\mathcal{X} \in \mathbb{R}^d$ and $\Gamma \in \mathbb{N}$\;

Define $U^* = \arg \max_{U \in \mathcal{U}} \zeta_{\min}(\Phi_U^{-1}).$

Equally distribute $\Gamma$ samples among arms $\calX_{U^*} = \{m : \phi_m \in U^*\}$ and  observe rewards $\{y_{m,t} \}_{t=1}^{\Gamma/M}$\;
Define the sample average for each arm $\bar{y}_m = (M/\Gamma)\sum_{t=1}^{\Gamma/M} y_{m,t}$\;

Calculate $
    \widehat{\Sigma}^{SA}_{\Gamma} \leftarrow \arg \min_{\bs \in \mathbb{R}^{M}} \sum_{m \in \calX_{U^*} } \sum_{t=1}^{\Gamma/M} \left [ {\phi_m}^{\top}\bs- (\bar{y}_m - y_{m,t})^2 \right ]^2$

\caption{Seperate Arm Estimator of Heteroskedastic Variance}
\label{alg_var:non_lin_est}
\end{algorithm}

\newpage

\section{Proofs of Variance Estimation}
\label{app_sec:VarEst}
In this appendix, we analyze the absolute error of estimates associated with the Seperate Arm Estimator and the \texttt{HEAD} Estimator. Recall that $M:=d(d+1)/2$. Define $\mathcal{W} = \{ \phi_x, \ \forall \ x \in \calX \} \subset \mathbb{R}^M$ where $\phi_x=\mathrm{vec}(xx^{\top})G$ and $G \in \mathbb{R}^{d^2 \times M}$ is the duplication matrix such that $G \mathrm{vech}(xx^{\top}) = \mathrm{vec}(xx^{\top})$ with $\mathrm{vech}(\cdot)$ denoting the half-vectorization.
\subsection{Analysis of the Separate Arm Estimator}
\label{app_sec:SepArm}
We begin by proving a general result bounding the absolute error of the sample variance for any sub-Gaussian random variable with parameter $\gamma^2$. 
\begin{proposition}\label{sampvar}
Let $X$ be a $\gamma^2$-sub-Gaussian random variable with mean $\mu$. Assume that we have collected $n$ independent copies of $X$, $X_1, X_2, \ldots, X_n$, and label the sample mean as $\bar{X} = \sum_{i=1}^n X_i/n$. Defining the sample variance as $\widehat {\sigma}^2 = \sum_{i=1}^n (X_i - \bar{X})^2/n$ and the true variance as $\sigma^2 = \mathbb{E}(X_i^2) - \mu^2$, we find that
\begin{align*}\label{indbd}
   |\sigma^2 - \widehat {\sigma}^2| \leq \frac{5}{2}\max \left \{ \frac{4\sqrt{2}\gamma^2 \log(4/\delta)}{n}, \sqrt{ \frac{32\gamma^4\log(4/\delta)}{n}}  \right \}.
\end{align*}
\end{proposition}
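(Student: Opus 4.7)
The natural decomposition is to center the samples by the true mean: setting $Y_i = X_i - \mu$, a direct algebraic manipulation gives $\widehat{\sigma}^2 = \tfrac{1}{n}\sum_{i=1}^n Y_i^2 - \bar{Y}^2$, so that
\begin{equation*}
\widehat{\sigma}^2 - \sigma^2 \;=\; \Bigl[\tfrac{1}{n}\sum_{i=1}^n Y_i^2 - \sigma^2\Bigr] \;-\; \bar{Y}^2,
\end{equation*}
using $\mathbb{E}[Y_i^2] = \sigma^2$. The triangle inequality then reduces the problem to controlling the two pieces separately, each with failure probability $\delta/2$ so a union bound yields overall probability $1-\delta$.

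\textbf{Step 1 (sub-exponential term).} Each $Y_i$ is centered $\gamma^2$-sub-Gaussian, so by Proposition~\ref{subGausSq} each $Y_i^2$ is sub-exponential with parameters $(4\sqrt{2}\gamma^2, 4\gamma^2)$. Proposition~\ref{SubExpAdd} then gives that $\sum_{i=1}^n(Y_i^2 - \sigma^2)$ is sub-exponential with parameters $(4\sqrt{2}\gamma^2\sqrt{n},\,4\gamma^2)$. Applying the two-sided tail bound in Proposition~\ref{SubExpBound} at level $\delta/2$ and dividing by $n$ yields, with probability $1-\delta/2$,
\begin{equation*}
\Bigl|\tfrac{1}{n}\sum_{i=1}^n Y_i^2 - \sigma^2\Bigr| \;\leq\; \max\!\left\{\sqrt{\tfrac{64\gamma^4\log(4/\delta)}{n}},\;\tfrac{8\gamma^2\log(4/\delta)}{n}\right\}.
\end{equation*}

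\textbf{Step 2 (mean term).} Since $\bar{Y}$ is $(\gamma^2/n)$-sub-Gaussian with mean zero, Proposition~\ref{SubGausBd} gives $|\bar{Y}| \leq \gamma\sqrt{2\log(4/\delta)/n}$ with probability $1-\delta/2$, so $\bar{Y}^2 \leq 2\gamma^2\log(4/\delta)/n$ on the same event.

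\textbf{Step 3 (combine and simplify constants).} On the intersection of both events (probability $\geq 1-\delta$),
\begin{equation*}
|\widehat{\sigma}^2 - \sigma^2| \;\leq\; \max\!\left\{\sqrt{\tfrac{64\gamma^4\log(4/\delta)}{n}},\,\tfrac{8\gamma^2\log(4/\delta)}{n}\right\} + \tfrac{2\gamma^2\log(4/\delta)}{n}.
\end{equation*}
The remaining work is just to absorb the additive $\bar{Y}^2$ contribution into the $\max$. I split into two cases depending on whether $\log(4/\delta)/n \leq 1$ (so $\log(4/\delta)/n \leq \sqrt{\log(4/\delta)/n}$ and the square-root branch dominates) or $\log(4/\delta)/n > 1$ (the linear branch dominates). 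In each case the sum is at most $10\gamma^2$ times the respective rate, and since $10 \leq \tfrac{5}{2}\cdot 4\sqrt{2}$, this matches the claimed constant $\tfrac{5}{2}$ in front of the stated $\max$.

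\textbf{Main obstacle.} There is no conceptual difficulty; the approach is standard concentration. The only mild friction is bookkeeping the constants: the sub-exponential parameters from Proposition~\ref{subGausSq} produce an $8\gamma^2$ coefficient that must be rewritten as $\tfrac{5}{2}\cdot 4\sqrt{2}\gamma^2$ to match the theorem's formatting, and one must verify in both regimes (small and large $\log(4/\delta)/n$) that the $\bar{Y}^2$ term is absorbed without extra loss. This is routine case analysis rather than a genuine obstacle.
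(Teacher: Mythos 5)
Your proof is correct and follows essentially the same route as the paper's: both expand the sample variance as the average of squares minus the squared sample mean, control the average of squares via Propositions~\ref{subGausSq}, \ref{SubExpAdd}, and \ref{SubExpBound}, and handle the squared-mean term separately before absorbing everything into the stated $\max$. The only (harmless, and arguably cleaner) difference is that you center at $\mu$ first, so the paper's three-term decomposition ($\mathbf{A}$, $\mathbf{B}$, and the bias term $\sigma^2/n$) collapses to two terms with $\bar{Y}^2$ bounded by a plain sub-Gaussian tail, and your constant bookkeeping ($10\gamma^2 \leq \tfrac{5}{2}\cdot 4\sqrt{2}\,\gamma^2$) checks out.
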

\begin{proof}
We begin by upper bounding the absolute error using the triangle inequality.
\begin{align*}
   \left |\sigma^2 - \widehat {\sigma}^2\right | &= \left |\sigma^2 -  \frac{\sum_{i=1}^n (X_i - \bar{X})^2}{n}\right | \\
                               &= \left | \mathbb{E}(X_i^2) - \mu^2 -  \frac{\sum_{i=1}^n X_i^2 - 2X_i\bar{X} + \bar{X}^2}{n}\right | \\
                               &= \left | \mathbb{E}(X_i^2) - \mu^2 -  \frac{\sum_{i=1}^n X_i^2}{n} + 2\bar{X}^2 -\bar{X}^2\right | \\
                               &= \left | \mathbb{E}(X_i^2) -  \frac{\sum_{i=1}^n X_i^2}{n} + \bar{X}^2 - \frac{\sigma^2}{n} - \mu^2 + \frac{\sigma^2}{n} \right | \\
                               &\leq \underbrace{\left | \mathbb{E}(X_i^2) -  \frac{\sum_{i=1}^n X_i^2}{n} \right |}_{\mathbf{A}} + \underbrace{\left | \bar{X}^2 - \frac{\sigma^2}{n} - \mu^2 \right |}_{\mathbf{B}}  + \underbrace{\left | \frac{\sigma^2}{n} \right | }_{\mathbf{C}}.
\end{align*}
We start by bounding quantity $\mathbf{A}$. Leveraging Proposition \ref{subGausSq}, we establish that $X_i^2 - \mathbb{E}(X_i^2)$ is sub-exponential with parameters $(4\gamma^2 \sqrt{2},  4\gamma^2)$.
We can then use Proposition \ref{SubExpAdd} to analyze the sum of independent sub-exponential random variables, finding that $ \sum_{i=1}^n [X_i^2 - \mathbb{E}(X_i^2)]/n $ is sub-exponential with parameters $(4\gamma^2\sqrt{2}/\sqrt{n},4\gamma^2/n)$. We then invoke Proposition \ref{SubExpBound} to bound quantity $\mathbf{A}$ with probability $1-\delta/2$,
\begin{align*}
    \left | \mathbb{E}(X_i^2) -  \frac{\sum_{i=1}^n X_i^2}{n} \right | &\leq  \max \left \{ \frac{4\gamma^2 \log(4/\delta)}{n}, \sqrt{ \frac{32\gamma^4\log(4/\delta)}{n}}  \right \}.
\end{align*}
Now we bound quantity $\mathbf{B}$. Appealing to Proposition \ref{subGausSq}, we find that $\bar{X}^2 - \frac{\sigma^2}{n} - \mu^2$ is sub-exponential with parameters $(4\sqrt{2}\gamma^2/n, 4\gamma^2/n)$. We then use Proposition \ref{SubExpBound} to bound $\mathbf{B}$ with probability $1-\delta/2$,
\begin{align*}
    \left | \bar{X}^2 - \frac{\sigma^2}{n} - \mu^2 \right | & \leq  \max \left \{ \frac{4\gamma^2 \log(4/\delta)}{n}, \sqrt{ \frac{32\gamma^4\log(4/\delta)}{n^2}}  \right \} \\
    & =  \max \left \{ \frac{4\gamma^2 \log(4/\delta)}{n}, \frac{4\gamma^2\log(4/\delta)}{n} \frac{\sqrt{2}}{\sqrt{\log(4/\delta)}} \right \} \\
    & \overset{(a)}{\leq} \frac{4\sqrt{2}\gamma^2 \log(4/\delta)}{n},
\end{align*}
where $(a)$ follows because $\sqrt{2}/\sqrt{\log(4/\delta)} \leq \sqrt{2}$.
Finally, we can combine quantities $\mathbf{A}, \mathbf{B}$ and $\mathbf{C}$ and conclude that with probability $1-\delta$ for $\delta \in (0,1)$,
\begin{align*}
   \left |\sigma^2 - \widehat {\sigma}^2\right |  &\leq \left | \mathbb{E}(X_i^2) -  \frac{\sum_{i=1}^n X_i^2}{n} \right | + \left | \bar{X}^2 - \frac{\sigma^2}{n} - \mu^2 \right |  + \left | \frac{\sigma^2}{n} \right |  \\
   & \leq \max \left \{ \frac{4\gamma^2 \log(4/\delta)}{n}, \sqrt{ \frac{32\gamma^4\log(4/\delta)}{n}}  \right \} + \frac{4\sqrt{2}\gamma^2 \log(4/\delta)}{n} + \frac{\gamma^2}{n} \\
   & \leq \frac{5}{2}\max \left \{ \frac{4\sqrt{2}\gamma^2 \log(4/\delta)}{n}, \sqrt{ \frac{32\gamma^4\log(4/\delta)}{n}}  \right \}. 
\end{align*}
\end{proof}

Intuitively, we can now use Proposition \ref{sampvar} to control the sample variance estimates of the arms chosen by the Separate Arm Estimator. Theorem \ref{ConcBound} derives concentration bounds for $\widetilde{\sigma}_x^2 = \min \left \{ \max\{ \sigma_{\min}^2, x^\top\widehat{\Sigma}^{\mathrm{SA}}_\Gamma x \}, \sigma_{\max}^2 \right \}$. These scale unfavorably with dimension, $\mathcal{O}(d^4)$, as highlighted in the theoretical and empirical comparisons conducted in Section \ref{sec:var_linear_estimator}.

\begin{theorem}\label{ConcBound}
Define $\mathcal{U}$ as the set of subsets of $\mathcal{W}$ of size $M = d(d+1)/2$ and for $U \in \mathcal{U}$ construct $\Phi_U \in \mathbb{R}^{M \times M}$ such that the rows of $\Phi_U$ are composed of $\phi_x \in U$. Let $\zeta_{\min}(A)$ be the minimum singular value of matrix $A \in \mathbb{R}^{d\times d}$.
The Seperate Arm Estimator picks a set of $M$ arms such that
\begin{equation*}
    U^* = \arg \max_{U \in \mathcal{U}} \zeta_{\min}(\Phi_U^{-1}).
\end{equation*}
Defining $\widetilde{\sigma}_x^2 = \min \left \{ \max\{ \sigma_{\min}^2, x^\top\widehat{\Sigma}^{\mathrm{SA}}_\Gamma x \}, \sigma_{\max}^2 \right \}$ and $\sigma_x^2 = x^{\top}\Sigma^* x$ for any $x \in \calX$,
\begin{align*}
    \mathbb{P} \left ( |\widetilde {\sigma}_x^2 - \sigma_x^2| \leq \frac{10}{\zeta_{\min}({\Phi_{U^*}})} \max \left \{ \frac{4\sqrt{2}\sigma_{\max}^2 d^3 \log(8M/\delta)}{\Gamma}, \sqrt{ \frac{32\sigma_{\max}^4 d^4\log(8M/\delta)}{\Gamma}}  \right \} \right ) \geq 1-\delta.
\end{align*}
\end{theorem}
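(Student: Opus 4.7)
The plan is as follows. First I observe that because $U^{*}$ maximizes $\zeta_{\min}(\Phi_{U}^{-1})$ over $\calU$, the $M \times M$ matrix $\Phi_{U^{*}}$ is square and invertible. Writing out the normal equations for the least squares problem in Alg.~\ref{alg_var:non_lin_est} and using that the targets $(y_{m,t}-\bar{y}_m)^{2}$ share the same regressor $\phi_m$ for each $t$, the minimizer simplifies to $\mathrm{vech}(\widehat{\Sigma}^{\mathrm{SA}}_{\Gamma}) = \Phi_{U^{*}}^{-1}\widehat{V}$, where $\widehat{V}\in\mathbb{R}^{M}$ stacks the per-arm sample variances $\widehat{v}_m = (M/\Gamma)\sum_{t=1}^{\Gamma/M}(y_{m,t}-\bar{y}_m)^{2}$. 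Letting $V^{*}\in\mathbb{R}^{M}$ stack the true variances $\sigma_m^{2}$, the identity $\sigma_m^{2} = \phi_m^{\top}\mathrm{vech}(\Sigma^{*})$ shows $V^{*} = \Phi_{U^{*}}\mathrm{vech}(\Sigma^{*})$, so $\mathrm{vech}(\widehat{\Sigma}^{\mathrm{SA}}_{\Gamma}-\Sigma^{*}) = \Phi_{U^{*}}^{-1}(\widehat{V}-V^{*})$.

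Next I invoke Proposition~\ref{sampvar} on each arm $m\in\calX_{U^{*}}$. Since the $\Gamma/M$ observations of arm $m$ are $\sigma_m^{2}$-sub-Gaussian with $\sigma_m^{2}\leq \sigma_{\max}^{2}$, choosing the per-arm failure probability $\delta' = \delta/(2M)$ and union-bounding over the $M$ arms yields, with probability at least $1-\delta/2$,
\begin{equation*}
\max_{m\in \calX_{U^{*}}}|\widehat{v}_m - \sigma_m^{2}| \;\leq\; \frac{5}{2}\max\!\left\{\frac{4\sqrt{2}\,\sigma_{\max}^{2} M \log(8M/\delta)}{\Gamma},\,\sqrt{\frac{32\,\sigma_{\max}^{4} M \log(8M/\delta)}{\Gamma}}\right\},
\end{equation*}
which is exactly the source of the $\log(8M/\delta)$ factor in the statement.

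The key step is then transferring this entrywise control of $\widehat{V}-V^{*}$ to control of $x^{\top}\widehat{\Sigma}^{\mathrm{SA}}_{\Gamma} x$ for an arbitrary $x\in\calX$. Using $x^{\top}\Sigma x = \phi_x^{\top}\mathrm{vech}(\Sigma)$ together with Cauchy--Schwarz and the operator-norm identity $\|\Phi_{U^{*}}^{-1}\|_{2} = 1/\zeta_{\min}(\Phi_{U^{*}})$,
\begin{equation*}
|x^{\top}(\widehat{\Sigma}^{\mathrm{SA}}_{\Gamma}-\Sigma^{*})x| = |\phi_x^{\top}\Phi_{U^{*}}^{-1}(\widehat{V}-V^{*})| \;\leq\; \frac{\|\phi_x\|_{2}}{\zeta_{\min}(\Phi_{U^{*}})}\,\|\widehat{V}-V^{*}\|_{2}.
\end{equation*}
I then bound $\|\widehat{V}-V^{*}\|_{2}\leq \sqrt{M}\,\|\widehat{V}-V^{*}\|_{\infty}$ and $\|\phi_x\|_{2}$ by an absolute constant under the (standard) assumption of bounded arms. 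Combining with $M = d(d+1)/2 \leq d^{2}$ turns the factor $M$ appearing inside the Proposition~\ref{sampvar} bound into $M^{3/2} \leq d^{3}$ for the linear-in-$\log$ term and $M^{2}\leq d^{4}$ inside the square root for the sub-Gaussian term, reproducing the advertised $d^{3}$ and $d^{4}$ scalings.

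The final step is to pass from $x^{\top}\widehat{\Sigma}^{\mathrm{SA}}_{\Gamma}x$ to the clipped $\widetilde{\sigma}_x^{2}$: since $\sigma_x^{2}\in[\sigma_{\min}^{2},\sigma_{\max}^{2}]$ by assumption, projection onto $[\sigma_{\min}^{2},\sigma_{\max}^{2}]$ is a contraction toward $\sigma_x^{2}$, so $|\widetilde{\sigma}_x^{2}-\sigma_x^{2}|\leq |x^{\top}\widehat{\Sigma}^{\mathrm{SA}}_{\Gamma}x-\sigma_x^{2}|$ and the bound transfers without change. The main obstacles are (i) book-keeping the constants so that $4\cdot (5/2) = 10$ emerges as the leading factor and the probability accounting matches $\log(8M/\delta)$, and (ii) properly controlling $\|\phi_x\|_{2}$ under the implicit normalization of $\calX$; both are routine but require care.
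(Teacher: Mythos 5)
Your proposal is correct and follows essentially the same route as the paper's proof: per-arm sample-variance concentration via Proposition~\ref{sampvar} with a union bound over the $M$ selected arms, aggregation into an $\ell_2$ bound with a $\sqrt{M}$ factor, inversion of $\Phi_{U^*}$ controlled by $1/\zeta_{\min}(\Phi_{U^*})$, a bound on $\|\phi_x\|_2$, and the observation that clipping to $[\sigma_{\min}^2,\sigma_{\max}^2]$ only shrinks the error. The only differences are cosmetic: you exploit invertibility of $\Phi_{U^*}$ to write the estimator as $\Phi_{U^*}^{-1}\widehat{V}$ directly (the paper instead uses the generic least-squares residual comparison, picking up a factor of $2$), and where you assert $\|\phi_x\|_2 = O(1)$ the paper proves $\|\phi_x\|_2^2\leq 2$ explicitly from the duplication-matrix identities $G^{\top}G\preceq 2I$ and $G(G^{\top}G)^{-1}G^{\top}\mathrm{vec}(xx^{\top})=\mathrm{vec}(xx^{\top})$ together with $\|x\|_2\leq 1$.
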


\begin{proof}
Labeling $M = d(d+1)/2$ and $\calX_{U^*} = \{m : \phi_m \in U^*\}$, in Alg.~\ref{alg_var:non_lin_est} we observe $\{y_{m,t} \}_{t=1}^{\Gamma/M}$ for each arm $m \in \calX_{U^*}$.
Define the sample average for $m \in \calX_{U^*}$ as
\begin{equation*}
    \bar{y}_m = \frac{\sum_{t=1}^{\Gamma/M} y_{m,t}}{\Gamma/M}.
\end{equation*}
We obtain an estimate of $\Sigma^*$ via
\begin{align}\label{eq:minimize}
    \mathrm{vech}({\widehat{\Sigma}}^{\mathrm{SA}}_{\Gamma}) = \arg \min_{\bs \in \mathbb{R}^{M}} \sum_{m \in \calX_{U^*}}\sum_{t=1}^{\Gamma/M} \left [ \langle \phi_m, \bs \rangle - (\bar{y}_m - y_{m,t})^2 \right ]^2.
\end{align}
Now label $z_{m,t} = (\bar{y}_m - y_{m,t})^2 \in \mathbb{R}$, $\Psi^* = \mathrm{vech}(\Sigma^*) \in \mathbb{R}^{M}$, $\widetilde {\Psi}_\Gamma = \mathrm{vech}(\widehat{\Sigma}^{\mathrm{SA}}_\Gamma) \in \mathbb{R}^{M}$ and
$$\bar{z}_m = \frac{\sum_{t=1}^{\Gamma/M} z_{m,t}}{\Gamma/M}.$$
We reformulate Eq.~\ref{eq:minimize} as follows 
\begin{align*}
    \widetilde {\Psi}_\Gamma &=\arg \min_{{\bf s} \in \mathbb{R}^{M}} \sum_{m \in\calX_{U^*}} \sum_{t=1}^{\Gamma/M} \left [ \langle \phi_m, {\bf s}  \rangle -   z_{m,t} \right ]^2 \\
    &= \arg \min_{{\bf s} \in \mathbb{R}^{M}} \sum_{m \in\calX_{U^*}} \sum_{t=1}^{\Gamma/M}  \langle \phi_m , {\bf s}  \rangle^2 - 2z_{m,t}\langle \phi_m , {\bf s}  \rangle + z_{m,t}^2 \\
    &=\arg \min_{{\bf s} \in \mathbb{R}^{M}} \sum_{m \in\calX_{U^*}} (\Gamma/M)  \langle \phi_m , {\bf s}  \rangle^2 - 2\langle \phi_m , {\bf s}  \rangle \sum_{t=1}^{\Gamma/M}  z_{m,t} +
    \sum_{t=1}^{\Gamma/M}z_{m,t}^2 \\
    &=\arg \min_{{\bf s} \in \mathbb{R}^{M}} \sum_{m \in\calX_{U^*}} \langle \phi_m , {\bf s}  \rangle^2 - 2\langle \phi_m , {\bf s}  \rangle \bar{z}_{m} + \frac{\sum_{t=1}^{\Gamma/M} z_{m,t}^2}{\Gamma/M}  \\
    &\overset{(a)}{=}\arg \min_{{\bf s} \in \mathbb{R}^{M}} \sum_{m \in\calX_{U^*}} \langle \phi_m , {\bf s}  \rangle^2 - 2\langle \phi_m , {\bf s}  \rangle \bar{z}_{m} +  \bar{z}_{m}^2 \\
    &=\arg \min_{{\bf s} \in \mathbb{R}^{M}} \sum_{m \in\calX_{U^*}} \left ( \langle \phi_m , {\bf s}  \rangle - \bar{z}_{m} \right )^2,
\end{align*}
where $(a)$ follows because $\sum_{t=1}^{\Gamma/M} z_{m,t}^2/(\Gamma/M)$ and $\bar{z}_{m}^2$ do not depend on ${\bf s }$.  Defining $\bar{z} = [\bar{z}_m]_{m \in\calX_{U^*}} \in \mathbb{R}^M$, Eq.~\ref{eq:minimize} becomes
\begin{align*}
    \widetilde {\Psi}_\Gamma = \arg \min_{{\bf s } \in \mathbb{R}^{M}} \sum_{m \in\calX_{U^*}} \left ( \langle \phi_m , {\bf s}  \rangle - \bar{z}_{m} \right )^2 = \arg \min_{{\bf s } \in \mathbb{R}^{M}} \|  \Phi_{U^*}\textbf{s}  - \bar{z} \|_2^2.
\end{align*}
We find that
\begin{align*}
    \| \Phi_{U^*}(\widetilde {\Psi}_\Gamma - \Psi^*) \|_2 &=
    \| \Phi_{U^*}\widetilde {\Psi}_\Gamma - \Phi_{U^*}\Psi^* - \bar{z} + \bar{z} \|_2  \\
    &= \| \Phi_{U^*}\widetilde {\Psi}_\Gamma - \bar{z}  +  \bar{z} - \Phi_{U^*}\Psi^*  \|_2  \\
    &\leq  \| \Phi_{U^*}\widetilde {\Psi}_\Gamma - \bar{z} \|_2 + \| \bar{z} - \Phi_{U^*}\Psi^* \|_2  \\
    & \leq  \| \Phi_{U^*}\Psi^* - \bar{z} \|_2 + \| \bar{z} - \Phi_{U^*}\Psi^* \|_2  \\
    &=  2\| \Phi_{U^*}\Psi^* - \bar{z} \|_2.
\end{align*}
Consequently, we can focus on bounding $\| \Phi_{U^*}\Psi^* - \bar{z} \|_2$ in order to control $\| \Phi_{U^*}(\widetilde {\Psi}_\Gamma - \Psi^*) \|_2$. Note that the $m^{th}$ entry of $(\Phi_{U^*}\Psi^* - \bar{z})$ is $(m^{\top} \Sigma^* m - \bar{z}_m)$. Using Theorem \ref{sampvar} and the fact that the $y_{m,t}$ are $\sigma_{\max}^2$-sub-Gaussian, we show that with probability $1-\delta/(2M)$,
\begin{align}\label{indbd2}
   |\phi_m^{\top} \Psi^* - \bar{z}_m| \leq \frac{5}{2}\max \left \{ \frac{4\sqrt{2}\sigma_{\max}^2 M\log(8M/\delta)}{\Gamma}, \sqrt{ \frac{32\sigma_{\max}^4M\log(8M/\delta)}{\Gamma}}  \right \}.
\end{align}
We apply Eq.~\ref{indbd2} to quantity $\| \Phi_{U^*}\Psi^* - \bar{z} \|_2$ and a union bound to find that with probability $1-\delta/2$,
\begin{align*}
    \| \Phi_{U^*}\Psi^* - \bar{z} \|_2 &\leq  \sqrt{M} \frac{5}{2}\max \left \{ \frac{4\sqrt{2}\sigma_{\max}^2 M\log(8M/\delta)}{\Gamma}, \sqrt{ \frac{32\sigma_{\max}^4M\log(8M/\delta)}{\Gamma}}  \right \} \\
    &= \frac{5}{2}\max \left \{ \frac{4\sqrt{2}\sigma_{\max}^2 M\sqrt{M} \log(8M/\delta)}{\Gamma}, \sqrt{ \frac{32\sigma_{\max}^4 M^2\log(8M/\delta)}{\Gamma}}  \right \}.
\end{align*}
Consequently,
\begin{align*}
    \| \Phi_{U^*}(\widetilde {\Psi}_\Gamma - \Psi^*) \|_2 \leq 5\max \left \{ \frac{4\sqrt{2}\sigma_{\max}^2 M\sqrt{M} \log(8M/\delta)}{\Gamma}, \sqrt{ \frac{32\sigma_{\max}^4 M^2\log(8M/\delta)}{\Gamma}}  \right \}.
\end{align*}
We now analyze the absolute error of the estimate $\widetilde{\sigma}_x$ for an arbitrary action $x \in \calX$. Since $\Phi_{U^*}$ is non-singular by construction, we know that for $q := ({\Phi_{U^*}}^{-1})^{\top} \phi_x \in \mathbb{R}^M$,
\begin{align*}
    |\widetilde {\Psi}_\Gamma^\top \phi_x - \sigma_x^2| &= \ \| (\widetilde {\Psi}_\Gamma - \Psi^*)^{\top} \phi_x \|_2 \\
    &= \| (\widetilde {\Psi}_\Gamma - \Psi^*)^{\top} {\Phi_{U^*}}^{\top}q \|_2 \\
    &\leq \| (\widetilde {\Psi}_\Gamma - \Psi^*)^{\top} {\Phi_{U^*}}^{\top} \|_2 \| q \|_2 \\
    &= \|{\Phi_{U^*}} (\widetilde {\Psi}_\Gamma - \Psi^*) \|_2 \| ({\Phi_{U^*}}^{-1})^{\top} \phi_x \|_2 \\
    &\leq \|{\Phi_{U^*}} (\widetilde {\Psi}_\Gamma - \Psi^*) \|_2 \| ({\Phi_{U^*}}^{-1})^{\top} \|_2 \| \phi_x \|_2 \\
    &\overset{(a)}{=} \frac{2}{\zeta_{\min}({\Phi_{U^*}})} \|{\Phi_{U^*}} (\widetilde {\Psi}_\Gamma - \Psi^*) \|_2 \| \phi_x \|_2,
\end{align*}
where $(a)$ follows from the definition of the matrix norm. We now leverage the nature of the duplication matrix $G$ to bound $\| \phi_x \|^2_2$ and explain below.
\begin{align*}
    \| \phi_x \|_2^2 &= \vecto(xx^\top)^\top G G^{\top} \vecto(xx^\top) \\
    &= \vecto(xx^\top)^\top G(G^{\top}G)(G^{\top}G)^{-1}G^{\top} \vecto(xx^\top) \\
    &= \vecto(xx^\top)^\top G (G^{\top}G)^{-1/2}(G^{\top}G)(G^{\top}G)^{-1/2}G^{\top} \vecto(xx^\top)  \\
    &\overset{(a)}{\leq} 2\vecto(xx^\top)^\top G (G^{\top}G)^{-1}G^{\top} \vecto(xx^\top) \\
    &\overset{(b)}{=} 2\vecto(xx^\top)^\top \vecto(xx^\top) \\
    &= 2x^\top x x^\top x \\
    &\leq 2,
\end{align*}
where $(a)$ follows from the fact that $G^\top G \preceq 2I$ as shown in Equation $(61)$ of \cite{neudecker1986symmetry}, and $(b)$ uses the fact that $G (G^{\top}G)^{-1}G^{\top} \vecto(xx^\top) = \vecto(xx^\top)$ as shown in Equations $(54)$ and $(52)$ of \cite{neudecker1986symmetry}.
Consequently, with probability greater than $1-\delta/2$, we have that for any $x \in \mathcal{X}$, 
\begin{align*}
    |\widetilde{\sigma}_x^2- \sigma^2_x| &\leq \frac{10}{\zeta_{\min}({\Phi_{U^*}})} \max \left \{ \frac{4\sqrt{2}\sigma_{\max}^2 M\sqrt{M} \log(8M/\delta)}{\Gamma}, \sqrt{ \frac{32\sigma_{\max}^4 M^2\log(8M/\delta)}{\Gamma}}  \right \}\\
    &\leq \frac{10}{\zeta_{\min}({\Phi_{U^*}})} \max \left \{ \frac{4\sqrt{2}\sigma_{\max}^2 d^3 \log(8M/\delta)}{\Gamma}, \sqrt{ \frac{32\sigma_{\max}^4 d^4\log(8M/\delta)}{\Gamma}}  \right \}.
\end{align*}
\end{proof}

\subsection{Analysis of the \texttt{HEAD} Estimator}
\label{app_sec:HEAD}
We now present the maximum absolute error guarantees of the \texttt{HEAD} procedure, Alg.~\ref{alg_var:HetEstLin}.
\begin{proof}[Proof of Theorem \ref{ConcBoundOLS}]
We are interested in bounding the estimation error $|\widehat {\sigma}_x^2 - \sigma_x^2|, \ \forall x \in \calX$. Define $\sigma_{\max}^2 =\max_{x \in \calX} \sigma_x^2$, $\Psi^* = \mathrm{vech}({\Sigma}^{\ast})$, $N_1 = \{1, \ldots, \Gamma/2 \}$, $N_2 = \{\Gamma/2 +1, \ldots, \Gamma\}$  and
\begin{align*}
\widehat{\Psi}_\Gamma = \arg \min_{\bs \in \mathbb{R}^{M}} \sum_{t \in N_2} \left [  \bs^{\top} \phi_{x_t} - (y_t - x_t^{\top} \widehat {\theta}_{\Gamma/2} )^2 \right ]^2,
\end{align*}
where $\widehat{\Psi}_\Gamma = \mathrm{vech}(\widehat{\Sigma}_\Gamma) $.
Furthermore, define vector $E$ such that $E_t =  (y_t - x_t^{\top} \widehat {\theta}_{\Gamma/2} )^2$ and $\Phi$ such that the rows of $\Phi$ are composed of $\{\phi_{x_t}\}_{t \in N_2}$. We find that
$$ \widehat{\Psi}_\Gamma= (\Phi^{\top}\Phi)^{-1}\Phi^{\top} E.$$
For any $\phi_x \in \mathcal{W}$, we are interested in bounding $|\phi_x^{\top} (\widehat{\Psi}_\Gamma - \Psi^*)|$,
\begin{align*}
     |\phi_x^{\top} (\widehat{\Psi}_\Gamma - \Psi^*)| &= |\phi_x^{\top} \left [(\Phi^{\top}\Phi )^{-1}\Phi^{\top} E - \Psi^* \right ] | \\
                                      &=  |\underbrace{\phi_x^{\top} (\Phi^{\top}\Phi)^{-1}\Phi^{\top}}_{z^{\top}} (E - \Phi \Psi^*) |.
\end{align*}
Define $X$ such that the rows of $X$ are composed of $\{x_t\}_{t \in N_2}$. We can bound this expression with the triangle inequality as
\begin{align*}
    |\phi_x^\top (\widehat{\Psi}_\Gamma - \Psi^*)| &= \left |\sum_{t \in N_2} z_t  [ x_t^{\top}(\widehat {\theta}_{\Gamma/2}- \theta^*)]^2  +  \sum_{t \in N_2}z_t \left ( \eta_t^2 - \phi_t^{\top}\Psi^* \right ) +  \sum_{t \in N_2}2 z_t x_t^{\top}(\widehat {\theta}_{\Gamma/2}- \theta^*)\eta_t \right | \\
    &\leq  \left | \sum_{t \in N_2} z_t [ x_t^{\top}(\widehat {\theta}_{\Gamma/2}- \theta^*)]^2 \right | + \left | \sum_{t \in N_2}z_t \left ( \eta_t^2 - \phi_t^{\top}\Psi^* \right ) \right | + \left |  \sum_{t \in N_2}2 z_t x_t^{\top}(\widehat {\theta}_{\Gamma/2}- \theta^*)\eta_t \right |.
\end{align*}

We can simplify the first term as,
\begin{align*}
    \left | \sum_{t \in N_2} z_t [ x_t^{\top}(\widehat {\theta}_{\Gamma/2}- \theta^*)]^2 \right | &=   \left | \sum_{t \in N_2} z_t \phi_{x_t}^{\top} \mathrm{vech}\left [ (\widehat {\theta}_{\Gamma/2}- \theta^*) (\widehat {\theta}_{\Gamma/2}- \theta^*)^{\top} \right ]  \right | \\
    &= \left | \phi_x^{\top} (\Phi^{\top}\Phi)^{-1}\Phi^{\top}\Phi \times \mathrm{vech}\left [ (\widehat {\theta}_{\Gamma/2}- \theta^*) (\widehat {\theta}_{\Gamma/2}- \theta^*)^{\top} \right ] \right | \\
    &= \left | x^{\top}(\widehat {\theta}_{\Gamma/2}- \theta^*) (\widehat {\theta}_{\Gamma/2}- \theta^*)^{\top} x \right | \\
    &= [x^{\top}(\widehat {\theta}_{\Gamma/2}- \theta^*)]^2.
\end{align*}

Additionally, the third term is equivalent to $2| (\eta \circ z)^{\top}X (\widehat {\theta}_{\Gamma/2}- \theta^*) |$. Consequently, the absolute error is equal to
\begin{align*}
    |\phi_x^\top (\widehat{\Psi}_\Gamma - \Psi^*)| = \underbrace{[x^{\top}(\widehat {\theta}_{\Gamma/2}- \theta^*)]^2}_{\mathbf{A}} + \underbrace{\left | \sum_{t \in N_2}z_t \left ( \eta_t^2 - \phi_t^{\top}\Psi^* \right ) \right |}_{\mathbf{B}}  + \underbrace{2| (\eta \circ z)^{\top}X (\widehat {\theta}_{\Gamma/2}- \theta^*) |}_{\mathbf{C}}.
\end{align*}
Note that to bound $|\phi_x^\top (\widehat{\Psi}_\Gamma - \Psi^*)|$ with probability $1-\delta/2$, it is necessary to bound quantities $\mathbf{A}, \mathbf{B}$ and $\mathbf{C}$ with probability $1-\delta/6$ in anticipation of a union bound. 

\textbf{Quantity $\mathbf{A}$.} We sample $\{x_t\}_{t=1}^{N_1}$ according to the rounded G-optimal design over $\calX$ constructed in Stage 1 of Alg.~\ref{alg_var:HetEstLin}. We use the concentration bounds of Proposition~\ref{SubGausBd}, to state that with probability $1-\delta/6$,
\begin{align*}
    \mathbf{A} &\leq 2 \underbrace{x^\top\big (\sum_{t \in N_1} x_tx_t^\top \big )^{-1}x}_{\mathbf{G}} \sigma_{\max}^2 \log(12|\calX|/\delta).
\end{align*}
We use Proposition~\ref{KW} in combination with the rounding procedure guarantees of Proposition~\ref{rounding} to bound quantity $\mathbf{G}$. For $\epsilon \in (0,1/3]$, with probability $1-\delta/6$,
\begin{equation*}
    \mathbf{A} \leq \frac{4d(1+6\epsilon)\sigma_{\max}^2 \log(12|\calX|/\delta)}{\Gamma}.
\end{equation*}
\textbf{Quantity $\mathbf{B}$.} According to Proposition \ref{subGausSq}, $\eta_t^2$ is sub-exponential with parameters $(4 \sqrt{2}\sigma_{\max}^2, 4 \sigma_{\max}^2)$. Consequently, By Proposition \ref{SubExpAdd}, we know that $\mathbf{B}$ is sub-exponential with parameters 
\begin{equation*}
    \left (\sqrt{\sum_{t \in N_2} z_t^2 32 \sigma_{\max}^4}, \max_{t \in N_2} |z_t|4\sigma_{\max}^2 \right ).
\end{equation*}

We now invoke the G-optimal design over $\calW$ space conducted in Stage 2 of Alg.~\ref{alg_var:HetEstLin} and Propositions \ref{KW} and \ref{rounding} to find that for $\epsilon \in (0,1/3]$,
\begin{align*}
\sum_{t \in N_2} z_t^2 &= \sum_{t \in N_2} \phi_x^{\top}(\Phi^{\top}\Phi)^{-1}\phi_t\phi_t^{\top}(\Phi^{\top}\Phi)^{-1}\phi_x \\
&= \phi_x^{\top}(\Phi^{\top}\Phi)^{-1}\Phi^{\top}\Phi(\Phi^{\top}\Phi)^{-1}\phi_x \\
&\leq \phi_x^{\top}(\Phi^{\top}\Phi)^{-1}\phi_x \leq \frac{2d^2(1+6\epsilon)}{\Gamma}.
\end{align*}
Consequently, 
\begin{align}\label{SigmaKW}
\sum_{t \in N_2} z_t^2 \leq \frac{2d^2(1+6\epsilon)}{\Gamma},
\end{align}
and 
\begin{align*}
|z_t| &= |\phi_x^{\top}(\Phi^{\top}\Phi)^{-1}\phi_t| \\
&\leq \sqrt{\phi_x^{\top}(\Phi^{\top}\Phi)^{-1}\phi_x}\sqrt{\phi_t^{\top}(\Phi^{\top}\Phi)^{-1}\phi_t} \\
&\leq \frac{2d^2(1+6\epsilon)}{\Gamma}.
\end{align*}
Therefore, $\mathbf{B}$ has sub-exponential parameters $(\sqrt{ 64 \sigma_{\max}^4d^2(1+6\epsilon)/\Gamma}, 8\sigma_{\max}^2 d^2(1+6\epsilon)/\Gamma)$. Using Proposition  \ref{SubExpBound}, with probability $1-\delta/6$, 
\begin{align*}
\mathbf{B} &\leq  \max \left \{ \sqrt{ \frac{128\log(12/\delta) \sigma_{\max}^4d^2(1+6\epsilon)} {\Gamma}},  \frac{16\log(12/\delta) \sigma_{\max}^2d^2(1+6\epsilon)} {\Gamma} \right \} \\
&= \max \left \{ \sqrt{8}\sigma_{\max} \sqrt{ \frac{16\log(12/\delta) \sigma_{\max}^2d^2(1+6\epsilon)} {\Gamma}},  \frac{16\log(12/\delta) \sigma_{\max}^2d^2(1+6\epsilon)} {\Gamma} \right \}. 
\end{align*}

\textbf{Quantity $\mathbf{C}$.} Construct $V \in \mathbb{R}^{\Gamma/2 \times d}$ such that the rows of $V$ correspond to the $\{x_t : t \in T_1\}$. We can upper bound quantity $\mathbf{C}$ using the Cauchy-Schwartz inequality in the following manner,
\begin{align*}
    | (\eta \circ z)^{\top}X (\widehat {\theta}_{\Gamma/2}- \theta^*) | &= | (\eta \circ z)^{\top}X (V^{\top}V)^{-1/2} (V^{\top}V)^{1/2}(\widehat {\theta}_{\Gamma/2}- \theta^*) | \\
    &\leq \| (\eta \circ z)^{\top}X (V^{\top}V)^{-1/2} \|_2 \|(V^{\top}V)^{1/2}(\widehat {\theta}_{\Gamma/2}- \theta^*)\|_2.
\end{align*}

By Proposition~\ref{latconc}, with probability $1-\delta/12$, 
\begin{equation*}
    \|(V^{\top}V)^{1/2}(\widehat {\theta}_{\Gamma/2}- \theta^*)\|_2 \leq 2 \sqrt{2 \sigma_{\max}^2 \left [ d \log(6) + \log(12/\delta) \right ]}.
\end{equation*}
Now construct $D_z = \mathrm{diag}(\{z_t\}_{t\in N_2})$ and express,
\begin{align*}
    \| (\eta \circ z)^{\top}X (V^{\top}V)^{-1/2} \|_2 =  \sqrt{ \eta^{\top} D_z X (V^{\top}V)^{-1}X^{\top} D_z \eta }.
\end{align*}
We first establish that by the G-optimal designs in both $\calX$ and $\calW$ space along with Propositions \ref{KW} and \ref{rounding},
\begin{align*}
    \mathrm{tr}(D_z X (V^{\top}V)^{-1}X^{\top} D_z) &= \mathrm{tr}(D_z^2 X (V^{\top}V)^{-1}X^{\top}) \\
    &= \sum_{t\in N_2} z_t^2 x_t^\top(V^{\top}V)^{-1}x_t  \\
    &\leq \frac{2d(1+6\epsilon)}{\Gamma} \sum_{t \in N_2} z_t^2 \\
    &\overset{(a)}{\leq} \frac{4d^3(1+6\epsilon)^2}{\Gamma^2},
\end{align*}
where $\epsilon \in (0,1/3]$ and $(a)$ follows from Eq.~\ref{SigmaKW}. Consequently, we find that
\begin{align}\label{FrobKW}
    \mathrm{tr}(D_z X (V^{\top}V)^{-1}X^{\top} D_z) \leq \frac{4d^3(1+6\epsilon)^2}{\Gamma^2}.
\end{align}
Labeling $\Delta = D_z X (V^{\top}V)^{-1}X^{\top} D_z $, we use Proposition~\ref{QuadForm} to bound $\eta^{\top} \Delta \eta$ with probability $1-\delta/12$ and explain below.
\begin{align}
     \eta^{\top} \Delta \eta &< \sigma_{\max}^2 \left [ \mathrm{tr}(\Delta) + 2\|\Delta\|_F \log(12/\delta)^{1/2} +  2\|\Delta\|_2 \log(12/\delta) \right ]  \\
      &  \leq   \sigma_{\max}^2 \left [ \frac{4d^3(1+6\epsilon)^2}{\Gamma^2} + 2\|\Delta^{1/2} \|^2_F \log(12/\delta)^{1/2} + 2\|\Delta \|_2 \log(12/\delta) \right ] \\
      &  \leq   \sigma_{\max}^2 \left [ \frac{4d^3(1+6\epsilon)^2}{\Gamma^2} + 2\|\Delta^{1/2} \|^2_F \log(12/\delta)^{1/2} + 2\|\Delta \|_F \log(12/\delta) \right ] \\
      &  \leq  \sigma_{\max}^2 \left [ \frac{4d^3(1+6\epsilon)^2}{\Gamma^2} + 2\|\Delta^{1/2} \|^2_F \log(12/\delta)^{1/2} + 2\|\Delta^{1/2} \|^2_F \log(12/\delta) \right ] \\
      & \leq   \sigma_{\max}^2 \left [ \frac{4d^3(1+6\epsilon)^2}{\Gamma^2} + 2\mathrm{tr}(\Delta)\log(12/\delta)^{1/2} + 2\mathrm{tr}(\Delta) \log(12/\delta) \right ] \\
      &  \leq   \sigma_{\max}^2 \left [ \frac{4d^3(1+6\epsilon)^2}{\Gamma^2} + 2\frac{4d^3(1+6\epsilon)^2}{\Gamma^2}\log(12/\delta)^{1/2} + 2\frac{4d^3(1+6\epsilon)^2}{\Gamma^2} \log(12/\delta) \right ] \\
      & \leq 5\sigma_{\max}^2 \frac{4d^3(1+6\epsilon)^2}{\Gamma^2} \log(12/\delta),
\end{align}
where line $(9)$ follows from Eq.~\ref{FrobKW} and Proposition~\ref{MatNorm}. In lines $(10)$ and $(11)$, we use Proposition~\ref{MatNorm} again and then the definition of the Frobenius norm in line $(12)$. Consequently, with probability $1-\delta/6$,
\begin{align*}
    \mathbf{C} &= | (\eta \circ z)^{\top}X (\widehat {\theta}_{\Gamma/2}- \theta^*) | \\
    &= | (\eta \circ z)^{\top}X (V^{\top}V)^{-1/2} (V^{\top}V)^{1/2}(\widehat {\theta}_{\Gamma/2}- \theta^*) | \\
    &\leq \| (\eta \circ z)^{\top}X (V^{\top}V)^{-1/2} \|_2 \|(V^{\top}V)^{1/2}(\widehat {\theta}_{\Gamma/2}- \theta^*)\|_2 \\
    &\leq 2\sqrt{10\sigma_{\max}^4 \frac{4d^3(1+6\epsilon)^2}{\Gamma^2} \log(12/\delta) \left [ d \log(6) + \log(12/\delta) \right ]}.
\end{align*}
Combining quantities $\mathbf{A}, \mathbf{B}$ and $\mathbf{C}$ together and applying a union bound, we have with probability $1-\delta/2$,
\begin{align*}
     |\phi_x^{\top} (\widehat{\Psi}_\Gamma - \Psi^*)| &\leq \max \left \{ \sqrt{8}\sigma_{\max} \sqrt{ \frac{16\log(12/\delta) \sigma_{\max}^2d^2(1+6\epsilon)} {\Gamma}},  \frac{16\log(12/\delta) \sigma_{\max}^2d^2(1+6\epsilon)} {\Gamma} \right \} +\\
     & \quad \frac{4d\sigma_{\max}^2\log(12|\calX|/\delta)(1+6\epsilon)}{\Gamma} + \\
     &\quad 2\sqrt{10\sigma_{\max}^4 \frac{4d^3(1+6\epsilon)^2}{\Gamma^2} \log(12/\delta) \left [ d \log(6) + \log(12/\delta) \right ]} \\
     &\leq \underbrace{2\max \left \{ \sqrt{8}\sigma_{\max} \sqrt{ \frac{16\log(12/\delta) \sigma_{\max}^2d^2(1+6\epsilon)} {\Gamma}},  \frac{16\log(12/\delta) \sigma_{\max}^2d^2(1+6\epsilon)} {\Gamma} \right \}}_{\mathbf{W}} +\\
     & \quad \underbrace{\frac{4d\sigma_{\max}^2\log(12|\calX|/\delta)(1+6\epsilon)}{\Gamma}}_{\mathbf{V}}.
\end{align*}
We can exploit the dependencies of $\mathbf{V}$ and $\mathbf{W}$ to simplify this bound in two ways under different conditions. Note that we only have dependency on $|\calX|$ through term $\mathbf{V}$. If $d-1 > \log(|\calX|)/\log(1/\delta)$ and $\Gamma > 16\log(12/\delta) \sigma_{\max}^2d^2(1+6\epsilon)$, then with probability $1-\delta/2$,
\begin{equation*}
|\phi_x^{\top} (\widehat{\Psi}_\Gamma - \Psi^*)| \leq 7\sigma_{\max} \sqrt{ \frac{16\log(12/\delta) \sigma_{\max}^2d^2(1+6\epsilon)} {\Gamma}}.
\end{equation*}
Alternatively, if $\Gamma > 16\log(12|\calX|/\delta) \sigma_{\max}^2d^2(1+6\epsilon)$, then
\begin{equation*}
|\phi_x^{\top} (\widehat{\Psi}_\Gamma - \Psi^*)| \leq 7\sigma_{\max} \sqrt{ \frac{16\log(12|\calX| /\delta) \sigma_{\max}^2d^2(1+6\epsilon)} {\Gamma}}.
\end{equation*}
Since $\sigma_{\max}^2 \geq \sigma_x^2 \geq \sigma_{\min}^2$ for all $x \in \calX$,
\begin{equation*}
    |\phi_x^{\top} (\widehat{\Psi}_\Gamma - \Psi^*)| = |x^{\top}\widehat {\Sigma}_{\Gamma}x - x^{\top}\Sigma^*x| 
     \geq |\min  \left  \{ \max \left \{x^{\top}\widehat{\Sigma}_{\Gamma} x , \sigma_{\min}^2 \right \}, \sigma_{\max}^2 \right \} - \sigma_x^2|.
\end{equation*}
In summary, if $\Gamma > 16\log(12|\calX|/\delta) \sigma_{\max}^2d^2(1+6\epsilon)$,
 \begin{equation*}
     \mathbb{P} \left (| \widehat{\sigma}_x^2 - \sigma_x^2| \leq  \sqrt{\frac{C'\log(|\calX|/\delta) \sigma_{\max}^4d^2}{\Gamma}} \right ) \geq 1-\delta,
 \end{equation*}
where $C' = 2*10^3(1+6\epsilon)$.
\end{proof}

We now prove a simple lemma relating the additive bound on $| \widehat{\sigma}_x^2 - \sigma_x^2|$ to a multiplicative bound $1- C_{\Gamma, \delta} < \sigma_x^2/\widehat{\sigma}_x^2  < 1+ C_{\Gamma, \delta}$.
\begin{lemma} \label{concmult}
 Letting $\sigma_{\min}^2 = \min_{x \in \calX} \sigma_x^2$ and $\kappa = \sigma_{\max}^2/\sigma_{\min}^2$,
\begin{equation*}
\mathbb{P} \left ( \forall x \in \calX,1 - C_{\Gamma, \delta} <  \frac{\sigma_x^2}{\widehat {\sigma}_x^2}  < 1+ C_{\Gamma, \delta}\right ) \geq 1-\delta/2, \ \mathrm{where} \  C_{\Gamma, \delta} = \sqrt{\frac{C'\log(|\calX|/\delta) \kappa^2d^2}{\Gamma}}.
\end{equation*}
\end{lemma}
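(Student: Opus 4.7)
The plan is to convert the uniform additive concentration bound from Theorem~\ref{ConcBoundOLS} into a multiplicative one by dividing through by the clipped estimate $\widehat{\sigma}_x^2$, which is bounded below by $\sigma_{\min}^2$ by construction.

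First, I would invoke Theorem~\ref{ConcBoundOLS}, which already controls the maximum absolute error uniformly over $x \in \mathcal{X}$ (the event $\max_{x \in \mathcal{X}} |\widehat{\sigma}_x^2 - \sigma_x^2| \le C_{\Gamma,\delta}^{\mathrm{add}}$ with probability at least $1-\delta/2$), where from the proof the additive radius is
\[
C_{\Gamma,\delta}^{\mathrm{add}} \;=\; \sqrt{\frac{C'\log(|\mathcal{X}|/\delta)\,\sigma_{\max}^4\,d^2}{\Gamma}}.
\]
Thus with probability at least $1-\delta/2$, for every $x \in \mathcal{X}$,
\[
\left|\frac{\sigma_x^2}{\widehat{\sigma}_x^2} - 1\right| \;=\; \frac{|\sigma_x^2 - \widehat{\sigma}_x^2|}{\widehat{\sigma}_x^2}
\;\le\; \frac{C_{\Gamma,\delta}^{\mathrm{add}}}{\widehat{\sigma}_x^2}.
\]

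Next, I would use the key observation that the clipping step in Alg.~\ref{alg_var:HetEstLin} guarantees $\widehat{\sigma}_x^2 \ge \sigma_{\min}^2$ deterministically. Substituting this lower bound into the denominator yields
\[
\left|\frac{\sigma_x^2}{\widehat{\sigma}_x^2} - 1\right| \;\le\; \frac{C_{\Gamma,\delta}^{\mathrm{add}}}{\sigma_{\min}^2}
\;=\; \sqrt{\frac{C'\log(|\mathcal{X}|/\delta)\,\sigma_{\max}^4\,d^2}{\Gamma\,\sigma_{\min}^4}}
\;=\; \sqrt{\frac{C'\log(|\mathcal{X}|/\delta)\,\kappa^2\,d^2}{\Gamma}} \;=\; C_{\Gamma,\delta},
\]
using $\kappa = \sigma_{\max}^2/\sigma_{\min}^2$. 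Rearranging gives the two-sided statement $1 - C_{\Gamma,\delta} < \sigma_x^2/\widehat{\sigma}_x^2 < 1 + C_{\Gamma,\delta}$ on the same high-probability event.

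There is essentially no obstacle here; the only subtlety is making sure the uniform (maximum absolute error) version of Theorem~\ref{ConcBoundOLS} is used rather than a pointwise statement, so that the bound holds simultaneously for all $x \in \mathcal{X}$ on a single $(1-\delta/2)$-probability event. Because Theorem~\ref{ConcBoundOLS} was designed precisely to control the MAE, this is immediate and no additional union bound is needed.
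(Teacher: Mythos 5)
Your proposal is correct and matches the paper's own argument essentially step for step: both start from the additive MAE bound of Theorem~\ref{ConcBoundOLS}, divide by $\widehat{\sigma}_x^2$, and use the clipping guarantee $\widehat{\sigma}_x^2 \geq \sigma_{\min}^2$ to replace $\sigma_{\max}^4/\widehat{\sigma}_x^4$ by $\kappa^2$. You even correctly cite Theorem~\ref{ConcBoundOLS} where the paper's proof mistakenly references the Separate Arm Estimator result, and your remark that the $\log(|\calX|/\delta)$ factor already encodes the uniformity over $\calX$ is accurate.
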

\begin{proof}
Using Lemma \ref{ConcBound}, we know that with probability greater than $1-\delta/2$  the following event holds.
\begin{align*}
    &|\sigma_x^2 - \widehat{\sigma}_x^2| \leq \sqrt{\frac{C'\log(|\calX|/\delta) \sigma_{\max}^4d^2}{\Gamma}} \Rightarrow \\
    &\widehat{\sigma}_x^2 - \sqrt{\frac{C'\log(|\calX|/\delta) \sigma_{\max}^4d^2}{\Gamma}} \leq \sigma_x^2 \leq \widehat{\sigma}_x^2  + \sqrt{\frac{C'\log(|\calX|/\delta) \sigma_{\max}^4d^2}{\Gamma}} \Rightarrow \\
    &\widehat{\sigma}_x^2 -\widehat{\sigma}_x^2\sqrt{\frac{C'\log(|\calX|/\delta) \sigma_{\max}^4d^2}{\widehat{\sigma}_x^4{\Gamma}}} \leq \sigma_x^2 \leq \widehat{\sigma}_x^2  + \widehat{\sigma}_x^2\sqrt{\frac{C'\log(|\calX|/\delta) \sigma_{\max}^4d^2}{\widehat{\sigma}_x^4{\Gamma}}}\Rightarrow \\
    &\widehat{\sigma}_x^2 -\widehat{\sigma}_x^2\sqrt{\frac{C'\log(|\calX|/\delta) \sigma_{\max}^4d^2}{\sigma_{\min}^4{\Gamma}}} \leq \sigma_x^2 \leq \widehat{\sigma}_x^2  + \widehat{\sigma}_x^2\sqrt{\frac{C'\log(|\calX|/\delta) \sigma_{\max}^4d^2}{\sigma_{\min}^4{\Gamma}}}\Rightarrow \\
    &1 - \sqrt{\frac{C'\log(|\calX|/\delta) \kappa^2d^2}{{\Gamma}}} \leq \frac{\sigma_x^2}{\widehat{\sigma}_x^2} \leq 1 + \sqrt{\frac{C'\log(|\calX|/\delta) \kappa^2d^2}{{\Gamma}}}.
\end{align*}
\end{proof}

\section{Proofs of Best-Arm Identification}
\label{app_sec:BAI}
We divide the proof of Theorem \ref{thm:lb} and \ref{UB} between Appendices \ref{app_sec:BAI} and \ref{app_sec:level_set} for ease of exposition. In Appendix~\ref{app_sec:BAI}, we consider the transductive linear best-arm identification problem ($\mathrm{BAI}$) in which we are interested in identifying $z^\ast = \max_{z\in \calZ} z^\top \theta^*$ with probability $1-\delta$ for $\delta \in (0,1)$. In Appendix~\ref{app_sec:level_set}, we consider the level set ($\mathrm{LS}$) counterpart. Recall that for a set $\mathcal{V}$, let $P_{\mathcal{V}}:=\{\lambda \in \mathbb{R}^{|\mathcal{V}|}: \sum_{v\in \mathcal{V}}\lambda_v =1, \lambda_v\geq 0 \ \forall \ v\}$ denote distributions over $\mathcal{V}$ and $\mathcal{Y}(\mathcal{V}) := \{ v' - v : v, v' \in \mathcal{V}, \ v \neq v' \}$ be an operator giving differences. We begin with the lower bound.
\subsection{Lower Bound}
\label{app_sec:lin_lower}
\begin{proof}[Proof of Theorem \ref{thm:lb} for transductive best-arm identification]
This proof is similar to the lower bound proofs of \cite{mason2022nearly} and \cite{fiez2019sequential}. 
Let $\mathcal{C}:= \{\widetilde{\theta} \in \mathbb{R}^d : \exists \   i  \ \ \mathrm{s.t.} \ \ \widetilde{\theta}^{\top} (z^* - z_i) \leq 0 \}$, i.e., $ \ttheta \in \mathcal{C}$ if and only if $z^*$ is not the best arm in the linear bandit instance $(\calX, \calZ, \widetilde{\theta})$. 

We now invoke Lemma 1 of \cite{kaufmann2016complexity}. Under a $\delta$-PAC strategy for finding the best arm for the bandit instance $(\mathcal{X}, \calZ,  \theta^*)$, let $T_i$ denote the random variable that is the number of times arm $i$ is pulled. Defining $\sigma_x^2 = x^{\top}\Sigma^* x$ for all $x \in \calX$, $\nu_{\theta, i}$ denotes the reward distribution of the $i$-th arm of $\calX$, i.e., $\nu_{\theta,i} = \mathcal{N}(x_i^{\top} \theta, \sigma_i^2)$. For any $\ttheta \in \mathcal{C}$ we know from \cite{kaufmann2016complexity} that
$$\sum_{i=1}^K \mathbb{E}(T_i)KL(\nu_{\theta^*, i} || \nu_{\ttheta, i}) \geq  \log(1/2.4\delta).$$
Following the steps of \cite{fiez2019sequential}, we find that
\begin{align}
    \sum_{i=1}^K \mathbb{E}(T_i) \geq \log(1/2.4\delta) \min_{\lambda \in P_\mathcal{X}} \max_{\ttheta \in \mathcal{C}} \frac{1}{\sum_{i=1}^K \lambda_i KL(\nu_{\theta^*, i} || \nu_{\ttheta, i})}.
    \label{KL}
\end{align}
Label $\calZ = \{z_1, z_2, \ldots, z_n\}$, where $z_1 = z^* $ without loss of generality. For $j \neq 1$, $\lambda \in P_\mathcal{X}$ and $\epsilon > 0$, define $A(\lambda) := \sum_{i=1}^K \lambda_i \frac{x_ix_i^{\top}}{\sigma_i^2}$, $w_j = z^* - z_j$ and
$$\theta_j (\epsilon, \lambda) = \theta^* - \frac{(w_j^{\top} \theta^* + \epsilon) A(\lambda)^{-1}w_j}{w_j^{\top} A(\lambda)^{-1}w_j}.$$
Note that $w_j^{\top} \theta_j(\epsilon, \lambda) = -\epsilon < 0$, implying that $\theta_j \in \mathcal{C}$. We find that the KL divergence between $\nu_{\theta, i }$ and $\nu_{\theta_j(\epsilon, \lambda), i}$ is given by:
\begin{align*}
    KL(\nu_{\theta, i } || \nu_{\theta_j(\epsilon, \lambda), i}) &= \frac{\left ( x_i^{\top}\left [\theta^* - \theta_j(\epsilon, \lambda)\right ] \right )^2}{2\sigma_i^2} \\
    &= w_j^{\top} A(\lambda)^{-1} \frac{(w_j^{\top} \theta^* + \epsilon)^2\frac{x_i x_i^{\top}}{2\sigma_i^2}}{(w_j^{\top}A(\lambda)^{-1}w_j)^2}A(\lambda)^{-1}w_j.
\end{align*}
Returning to Eq.~\ref{KL},
\begin{align*}
    \sum_{i=1}^K \mathbb{E}(T_i) &\geq \log (1/2.4\delta) \min_{\lambda \in P_\mathcal{X}} \max_{\ttheta \in \mathcal{C}} \frac{1}{\sum_{i=1}^K \lambda_i KL(\nu_{\theta^*, i} || \nu_{\ttheta,i})} \\
    &\geq \log (1/2.4\delta) \min_{\lambda \in P_\mathcal{X}} \max_{ j= 2, \cdots, m } \frac{1}{\sum_{i=1}^K \lambda_i KL(\nu_{\theta^*,i} || \nu_{\theta_j(\epsilon, \lambda) , i} )} \\
    &\geq \log (1/2.4\delta) \min_{\lambda \in P_\mathcal{X}}  \max_{ j= 2, \cdots, m } \frac{ (w_j^{\top}A(\lambda)^{-1}w_j)^2 }{\sum_{i=1}^K (w_j^{\top} \theta^* + \epsilon)^2 w_j^{\top} A(\lambda)^{-1} \lambda_i \frac{x_i x_i^{\top}}{2\sigma_i^2} A(\lambda)^{-1}w_j} \\
    &\overset{(a)}{=} 2\log (1/2.4\delta) \min_{\lambda \in P_\mathcal{X}}  \max_{ j= 2, \cdots, m } \frac{ (w_j^{\top}A(\lambda)^{-1}w_j)^2 }{(w_j^{\top} \theta^* + \epsilon)^2 w_j^{\top} A(\lambda)^{-1} (\sum_{i=1}^K \lambda_i  \frac{x_i x_i^{\top}}{\sigma_i^2}) A(\lambda)^{-1}w_j} \\
    &= 2\log (1/2.4\delta) \min_{\lambda \in P_\mathcal{X}}  \max_{ j= 2, \cdots, m } \frac{ (w_j^{\top}A(\lambda)^{-1}w_j )^2}{ (w_j^{\top} \theta^* + \epsilon )^2 },
\end{align*}
where $(a)$ uses the fact that $\sum_{i=1}^K \lambda_i  \frac{x_i x_i^{\top}}{\sigma_i^2} = A(\lambda)$. Letting $\epsilon \to 0$ establishes the result.
\end{proof}

\subsection{Upper Bound}
\label{app_sec:lin_upper}

We now prove Theorem~\ref{UB} by first establishing that Alg.~\ref{alg:Het} is $\delta$-PAC for the transductive linear bandits problem.

\subsubsection{Proof of Correctness}

\emph{Overview of Correctness Proof.}
Lemma \ref{concmult} along with Lemmas \ref{cor} through \ref{lo} are combined to prove that Algorithm~\ref{alg:Het} is $\delta$-PAC.
Lemma \ref{cor} leverages Lemma \ref{concmult} and begins by establishing that in any round $\ell \in \mathbb{N}$, the estimation error for the difference in mean rewards between $z \in \calZ$ and $z^* = \arg \max_{z \in \calZ} z^\top \theta^{\ast}$
is bounded by $\epsilon_\ell = 2^{-\ell}$ with probability $1-\delta$ for $\delta \in (0,1)$. Lemma \ref{li} uses this result to prove that starting at round $\ell$, $z^* \in \calZ_{\ell + 1}$ with probability $1-\delta$. 
This establishes that $z^*$ is not eliminated from the set of items we are considering at any step with high probability. 
Lastly, Lemma \ref{lo} uses Lemma \ref{cor} to show that Algorithm \ref{alg:Het} will eventually identify $z^*$. This is done by proving that Alg.~\ref{alg:Het} eliminates items with mean rewards that are outside a shrinking radius (halves at every step of the algorithm) of the highest reward, ${z^*}^\top \theta^*$.
In other words, with probability $1-\delta$ for $\delta \in (0,1)$, we prove that items in $\calZ_\ell$ with expected rewards that are $2\epsilon_\ell$ away from ${z^*}^{\top} \theta^*$ will not be in $\calZ_{\ell +1}$. Since $\epsilon_\ell \to 0$ as $\ell \to \infty$, we know that $\lim_{\ell \to \infty} \mathbb{P}(\calZ_\ell = \{ z^* \}) = 1-\delta$. 

\begin{lemma}\label{cor}
With probability at least $1-\delta$ for $\delta \in (0,1)$, 
$$|\langle z-z^*, \widehat{\theta}_\ell - \theta^* \rangle| \leq \epsilon_\ell $$ for all $\ell \in \mathbb{N}$ and $z\in \calZ$.
\end{lemma}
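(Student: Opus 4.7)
The plan is to apply a standard sub-Gaussian concentration inequality to $\langle z-z^*,\widehat\theta_\ell-\theta^*\rangle$, using the multiplicative variance-estimation guarantee from the burn-in phase to convert the true noise variances $\sigma_x^2$ appearing in the sub-Gaussian parameter into the plug-in variances $\widehat\sigma_x^2$ that the algorithm actually designs against, and then to take a union bound over $z$ and $\ell$.

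First, I would condition on the high-probability event $\mathcal{E}$, guaranteed by the initial burn-in call to Alg.~\ref{alg_var:HetEstLin} on line 3 of Alg.~\ref{alg:Het}, that $|\widehat\sigma_x^2-\sigma_x^2|\le\sigma_x^2/2$ for every $x\in\calX$; by Lemma~\ref{concmult} this holds with probability at least $1-\delta/2$, and in particular $\sigma_x^2\le 2\widehat\sigma_x^2$. Because of sample splitting, on $\mathcal{E}$ the second-stage noises $\{\eta_{\ell,i}\}$ remain $\sigma_{x_{\ell,i}}^2$-sub-Gaussian and independent of $\{\widehat\sigma_x^2\}$ and of $\widehat\lambda_\ell$, so the closed form
\begin{equation*}
\widehat\theta_\ell-\theta^*=A_\ell^{-1}\sum_{i=1}^{n_\ell}\widehat\sigma_{x_{\ell,i}}^{-2}\,x_{\ell,i}\,\eta_{\ell,i}
\end{equation*}
expresses $\langle z-z^*,\widehat\theta_\ell-\theta^*\rangle$ as a sum of independent mean-zero sub-Gaussians with coefficients $c_i=\widehat\sigma_{x_{\ell,i}}^{-2}(z-z^*)^\top A_\ell^{-1}x_{\ell,i}$. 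Telescoping through $A_\ell A_\ell^{-1}$ gives total sub-Gaussian parameter
\begin{equation*}
\sum_i c_i^2\sigma_{x_{\ell,i}}^2\ \le\ 2\sum_i c_i^2\widehat\sigma_{x_{\ell,i}}^2\ =\ 2\,\|z-z^*\|_{A_\ell^{-1}}^2.
\end{equation*}

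Next, I would use the design. By the choice of $\widehat\lambda_\ell$ minimizing $q(\lambda,\mathcal{Y}(\mathcal{Z}_\ell))$ and the rounding guarantee of Prop.~\ref{rounding}, $A_\ell\succeq \frac{\tau_\ell}{1+6\epsilon}\sum_x\widehat\lambda_{\ell,x}\widehat\sigma_x^{-2}xx^\top$, so for every $y\in\mathcal{Y}(\mathcal{Z}_\ell)$, $\|y\|_{A_\ell^{-1}}^2\le (1+6\epsilon)\,q(\mathcal{Z}_\ell)/\tau_\ell$. Pairing the proof of Lemma~\ref{cor} with the proof of Lemma~\ref{li} by joint induction on $\ell$ establishes $z^*\in\mathcal{Z}_\ell$, whence $z-z^*\in\mathcal{Y}(\mathcal{Z}_\ell)$ for $z\in\mathcal{Z}_\ell$ (for $z$ already eliminated, the bound follows by applying the same argument at the earlier round in which $z$ was last active). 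Proposition~\ref{SubGausBd} then yields
\begin{equation*}
\mathbb{P}\bigl(|\langle z-z^*,\widehat\theta_\ell-\theta^*\rangle|>\epsilon_\ell\bigr)\ \le\ 2\exp\!\left(-\frac{\epsilon_\ell^2\tau_\ell}{4(1+6\epsilon)q(\mathcal{Z}_\ell)}\right),
\end{equation*}
and substituting $\tau_\ell=3\epsilon_\ell^{-2}q(\mathcal{Z}_\ell)\log(8\ell^2|\mathcal{Z}|/\delta)$ (absorbing the rounding constant into the $3$) makes this at most $\delta/(4\ell^2|\mathcal{Z}|)$. A union bound over $z\in\mathcal{Z}$ and $\ell\in\mathbb{N}$, using $\sum_{\ell\ge 1}\ell^{-2}=\pi^2/6<2$, contributes at most $\delta/2$ of failure; combined with the $\delta/2$ burn-in failure probability, the total failure probability is at most $\delta$.

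The main obstacle, I expect, is bookkeeping rather than any deep step: one must (i) carefully preserve independence between $\{\eta_{\ell,i}\}$ and $\{\widehat\sigma_x^2\}$ (via the burn-in sample split) and between $\{\eta_{\ell,i}\}$ and the data-dependent allocation $\widehat\lambda_\ell$ (via refreshing samples each round), (ii) track the multiplicative loss from Prop.~\ref{rounding} through the sub-Gaussian parameter bound, and (iii) handle the universal quantifier ``for all $z\in\mathcal{Z}$'' in the statement despite the design only controlling directions in $\mathcal{Y}(\mathcal{Z}_\ell)$—this is exactly what forces the joint induction with Lemma~\ref{li} so that $z^*\in\mathcal{Z}_\ell$ and the relevant differences $z-z^*$ always lie in the designed set $\mathcal{Y}(\mathcal{Z}_\ell)$.
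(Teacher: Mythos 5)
Your proof follows essentially the same route as the paper's: condition on the multiplicative variance event from the burn-in, show the plug-in WLS error in direction $z-z^*$ is sub-Gaussian with parameter within a constant factor of $\|z-z^*\|^2_{A_\ell^{-1}}$ (the paper does this via the sandwich $u^\top\widehat{\Upsilon}^{-1/2}\Upsilon\widehat{\Upsilon}^{-1/2}u$, which is the same computation as your coefficient sum $\sum_i c_i^2\sigma_{x_{\ell,i}}^2$), bound that norm by $q(\mathcal{Y}(\mathcal{Z}_\ell))/\tau_\ell$ via the design, apply Proposition~\ref{SubGausBd}, and union bound over $z$ and $\ell$. Two constant-level issues keep your version from closing exactly under $\tau_\ell = 3\epsilon_\ell^{-2}q(\mathcal{Y}(\mathcal{Z}_\ell))\log(8\ell^2|\mathcal{Z}|/\delta)$. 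First, you pass from the additive guarantee $|\widehat{\sigma}_x^2-\sigma_x^2|\le\sigma_x^2/2$ to $\sigma_x^2\le 2\widehat{\sigma}_x^2$, whereas the paper uses the multiplicative form of Lemma~\ref{concmult}, $\sigma_x^2/\widehat{\sigma}_x^2\le 1+C_{\Gamma,\delta}=\tfrac{3}{2}$, and the factor $3$ in $\tau_\ell$ is calibrated precisely to $2\cdot\tfrac32$. Second, invoking Proposition~\ref{rounding} and carrying a $(1+6\epsilon)$ loss is unnecessary and harmful here: the algorithm pulls each arm $\lceil\tau_\ell\widehat{\lambda}_{\ell,x}\rceil\ge\tau_\ell\widehat{\lambda}_{\ell,x}$ times, so $A_\ell\succeq\tau_\ell\sum_x\widehat{\lambda}_{\ell,x}\widehat{\sigma}_x^{-2}xx^\top$ with no rounding loss at all (the ceiling only helps). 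With your constants the deviation bound comes out to $\epsilon_\ell\sqrt{4(1+6\epsilon)/3}>\epsilon_\ell$; with the paper's $\tfrac32$ and no $(1+6\epsilon)$ it is exactly $\epsilon_\ell$. On the other hand, your explicit joint induction with Lemma~\ref{li} to justify that $z-z^*$ lies in the designed direction set $\mathcal{Y}(\mathcal{Z}_\ell)$ (and your handling of already-eliminated $z$) is a point the paper's proof glosses over, and is a genuine improvement in rigor.
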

\begin{proof}
Define $\sigma_x^2 = x^{\top} \Sigma^* x$, $\widehat{\theta}_\ell$ as the weighted least squares estimator constructed in Algorithm~\ref{alg:Het} at stage $\ell$, and events
\begin{equation*}
    \mathcal{E}_{z,\ell} =| \langle z- z^*, \widehat{\theta}_\ell - \theta^* \rangle | \leq \epsilon_\ell
\end{equation*}
and
\begin{equation*}
    \mathcal{J} = \big \{ \sigma^2_x \leq \widehat{\sigma}^2_x \left (1 +C_{\Gamma, \delta}\right ) = \frac{3}{2} \widehat{\sigma}^2_x , \ \forall x \in \calX \big \}.
\end{equation*}

Label $n_\ell$ as the number of data points collected at stage $\ell$, $\Upsilon = \mathrm{diag}(\{\sigma_{i}^2\}_{i=1}^{n_\ell})$,  $\widehat{\Upsilon} = \mathrm{diag}(\{\widehat{\sigma}_{i}^2\}_{i=1}^{n_\ell})$, $X_{\ell}$ as a matrix whose rows are composed of the arms $x_{i,\ell}$ drawn during round $\ell$, $Y_{\ell}$ as the vector of rewards drawn during round $\ell$ and $\widetilde{\Omega} = A_{\ell}^{-1}X_{\ell}^\top \widehat{\Upsilon}^{-1} \Upsilon \widehat{\Upsilon}^{-1} X_{\ell} A_{\ell}^{-1}$ as the weighted least squares estimator's covariance. We first establish that for $z \in \mathbb{R}^d$,
$$\mathbb{E}[z^\top (\widehat{\theta}_\ell - \theta^*)]=  z^\top(A_t^{-1} X_{\ell}\widehat{\Upsilon}^{-1} \mathbb{E}(Y_{\ell}) - \theta^*) =  z^\top (A_t^{-1} X_{\ell}\widehat{\Upsilon}^{-1} X_{\ell} \theta^* - \theta^*) = 0,$$
and 
$$\mathbb{V}[z^\top (\widehat{\theta}_\ell - \theta^*)] = z^\top \widetilde{\Omega}z.$$
Furthermore, defining $u^\top =  z^\top A_{\ell}^{-1}X_{\ell}^\top \widehat{\Upsilon}^{-1/2} \in \mathbb{R}^{n_\ell}$ and conditioning on event $\mathcal{J}$, 
\begin{align*}
    z^\top \widetilde{\Omega}z &= z^\top A_{\ell}^{-1}X_{\ell}^\top \widehat{\Upsilon}^{-1} \Upsilon \widehat{\Upsilon}^{-1} X_{\ell} A_{\ell}^{-1}z \\
    &= (z^\top  A_{\ell}^{-1}X_{\ell}^\top \widehat{\Upsilon}^{-1/2} )\widehat{\Upsilon}^{-1/2}\Upsilon \widehat{\Upsilon}^{-1/2} (\widehat{\Upsilon}^{-1/2} X_{\ell} A_{\ell}^{-1} z) \\
    &=u^\top \widehat{\Upsilon}^{-1/2}\Upsilon \widehat{\Upsilon}^{-1/2}u \\
    &=\sum_{i=1}^{n_\ell} u_i^2 \frac{\sigma_i^2}{\widehat{\sigma}_i^2} \\
    & \overset{(a)}{\leq} \frac{3}{2} \sum_{i=1}^{n_\ell} u_i^2 \\
    &= \frac{3}{2} z^\top  A_{\ell}^{-1}X_{\ell}^\top \widehat{\Upsilon}^{-1} X_{\ell} A_{\ell}^{-1} z \\
    &=  \frac{3}{2} z^\top  A_{\ell}^{-1} z,
\end{align*}
where $(a)$ is the result of event $\mathcal{J}$. Using this result and Proposition \ref{SubGausBd}, with probability at least $1- \frac{\delta}{4\ell^2 | \calZ|}$,
\begin{align*}
    \mathcal{E}_{z,\ell} = |\langle z-z^*, \widehat{\theta}_\ell - \theta \rangle| &\leq ||z-z^*||_{\widetilde{\Omega}} \sqrt{2\log(8\ell^2 |\mathcal{Z}|/\delta)} \\
    &\leq \frac{3}{2} ||z-z^*||_{A_{\ell}^{-1} } \sqrt{2\log(8\ell^2 |\mathcal{Z}|/\delta)}.
\end{align*}

Conditioned on event $\mathcal{J}$, we have with probability 
$1- \frac{\delta}{4\ell^2 | \calZ|}$,
\begin{align*}
    |\langle z-z^*, \widehat{\theta}_\ell - \theta \rangle| & \leq \frac{3}{2} ||z-z^*||_{(\sum_{x \in \calV} \lceil \tau_\ell \lambda_{\ell, x} \rceil \frac{xx^{\top}}{\widehat{\sigma}_x^2})^{-1}} \sqrt{2\log(8\ell^2 |\mathcal{Z}|/\delta)} \\
    &\leq \frac{3}{2}||z-z^*||_{(\sum_{x \in \calX} \lceil \tau_\ell \lambda_{\ell, x}\rceil \frac{xx^{\top}}{\widehat{\sigma}_x^2})^{-1}} \sqrt{2\log(8\ell^2 |\mathcal{Z}|/\delta)} \\
    &\leq \frac{\frac{3}{2} ||z-z^*||_{(\sum_{x \in \calX}  \lambda_{\ell, x} \frac{xx^{\top}}{\widehat{\sigma}_x^2})^{-1}}}{\sqrt{\tau_\ell}} \sqrt{2\log(8\ell^2 |\mathcal{Z}|/\delta)} \\
    &= \sqrt{\frac{\frac{3}{2}||z-z^*||^2_{(\sum_{x \in \calX} \lambda_{\ell, x} \frac{xx^{\top}}{\widehat{\sigma}_x^2})^{-1}}}{2 \left (\frac{3}{2} \right ) \epsilon_\ell^{-2}q \left (\mathcal{Y}(\calZ_\ell) \right ) \log(8\ell^2 |\mathcal{Z}|/\delta)}} \sqrt{2\log(8\ell^2 |\mathcal{Z}|/\delta)} \\
    &\leq \epsilon_\ell.
\end{align*}
Applying a union bound, we find that
\begin{align*}
    \mathbb{P} \left ( \left (\cap_{\ell = 1}^\infty \cap_{z \in \mathcal{Z}_\ell}  \mathcal{E}_{z,\ell} | \mathcal{J} \right ) \cap \mathcal{J}  \right )  &= 1- \mathbb{P} \left ( \left (\cup_{\ell = 1}^\infty \cup_{z \in \mathcal{Z}_\ell}  \mathcal{E}^c_{z,\ell} | \mathcal{J} \right ) \cup \mathcal{J}^c  \right )  \\
    &\leq 1- \left [ \sum_{\ell = 1}^\infty \sum_{z \in \mathcal{Z}_\ell}  \mathbb{P}(\mathcal{E}^c_{z,\ell} | \mathcal{J}) + \mathbb{P}(\mathcal{J}^c) \right ] \\
    &= 1- \left [ \sum_{\ell = 1}^\infty \sum_{z \in \mathcal{Z}_\ell} \frac{\delta}{4\ell^2 | \calZ|} + \frac{\delta}{2} \right ] \\
    &\leq 1-\delta,
\end{align*}
which proves the result.
\end{proof}

Now we establish that, with probability $1-\delta$ for $\delta \in (0,1)$, the best arm $z^*$ will not be eliminated from $\calZ_\ell$ for any round $\ell \in \mathbb{N}$.
\begin{lemma} \label{li} With probability at least $1-\delta$ for $\delta \in (0,1)$, for any $\ell \in \mathbb{N}$ and $z' \in \mathcal{Z}_\ell$,
\begin{align*}
    \langle z' - z^*, \widehat{\theta}_\ell \rangle \leq \epsilon_\ell.
\end{align*}
\end{lemma}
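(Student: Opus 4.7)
The plan is to combine Lemma~\ref{cor} with the optimality of $z^*$ via a short induction on the round index $\ell$. On the good event of Lemma~\ref{cor}, which holds with probability at least $1-\delta$ simultaneously for all $\ell$ and all relevant $z$, decompose
$$\langle z' - z^*, \widehat{\theta}_\ell \rangle = \langle z' - z^*, \theta^* \rangle + \langle z' - z^*, \widehat{\theta}_\ell - \theta^* \rangle.$$
By the definition $z^* = \arg\max_{z \in \calZ} z^\top \theta^*$, the first term is non-positive for every $z' \in \calZ$. The second term is bounded in absolute value by $\epsilon_\ell$ via Lemma~\ref{cor}. Adding the two contributions gives $\langle z' - z^*, \widehat{\theta}_\ell \rangle \leq 0 + \epsilon_\ell = \epsilon_\ell$ as required.

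The one subtlety is that Lemma~\ref{cor}'s design-based bound relies on the allocation $\widehat{\lambda}_\ell$ minimizing $q(\lambda, \mathcal{Y}(\calZ_\ell))$, so the tail bound on $|\langle z' - z^*, \widehat{\theta}_\ell - \theta^* \rangle|$ is only guaranteed when $z' - z^* \in \mathcal{Y}(\calZ_\ell)$, i.e.\ when $z^* \in \calZ_\ell$. I handle this by induction. The base case $\ell = 1$ is immediate since $\calZ_1 = \calZ$. For the inductive step, assume $z^* \in \calZ_\ell$; then the decomposition above applies to every $z' \in \calZ_\ell$, yielding $\max_{z' \in \calZ_\ell} \langle z' - z^*, \widehat{\theta}_\ell \rangle \leq \epsilon_\ell$. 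Since the BAI elimination rule in Alg.~\ref{alg:Het} removes $z$ from $\calZ_\ell$ only when $\max_{z' \in \calZ_\ell} \langle z' - z, \widehat{\theta}_\ell \rangle > \epsilon_\ell$, the arm $z^*$ survives into $\calZ_{\ell+1}$, closing the induction and validating the invocation of Lemma~\ref{cor} in the next round.

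The argument is essentially one line of algebra plus a bookkeeping induction, so the real work has already been absorbed into Lemma~\ref{cor}: its union bound over $(\ell, z)$ with weights $\delta/(4\ell^2|\calZ|)$ supplies a single high-probability event on which every round's confidence interval holds, and the inductive argument above deterministically extracts the desired conclusion on that same event. Consequently the overall failure probability remains bounded by $\delta$, matching the statement of the lemma.
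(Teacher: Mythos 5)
Your proof is correct and takes essentially the same route as the paper's: the identical decomposition $\langle z'-z^*,\widehat{\theta}_\ell\rangle = \langle z'-z^*,\theta^*\rangle + \langle z'-z^*,\widehat{\theta}_\ell-\theta^*\rangle$, with the first term non-positive by optimality of $z^*$ and the second bounded by $\epsilon_\ell$ via Lemma~\ref{cor}. The bookkeeping induction you add to certify that $z^*$ remains in $\mathcal{Z}_\ell$ (so that the direction $z'-z^*$ is actually covered by the round-$\ell$ design) is a legitimate point of care that the paper leaves implicit, but it does not change the substance of the argument.
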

\begin{proof}
We know that with probability $1-\delta$ for all $\ell \in \mathbb{N}$ and $z' \in \mathcal{Z}_\ell$,
\begin{align*}
    \langle z' - z^*, \widehat{\theta}_\ell \rangle &= \langle z' - z^*, \widehat{\theta}_\ell - \theta^* \rangle + \langle z' - z^*, \theta^* \rangle \\
    &\leq \langle z' - z^*, \widehat{\theta}_\ell - \theta^* \rangle \\
    &\leq \epsilon_\ell,
\end{align*}
where the last inequality follows from Lemma~\ref{cor}.
\end{proof}

Finally, we establish that in round $\ell$, items in $\calZ_{\ell}$ that have mean rewards farther than $2\epsilon_{\ell}$ from the highest reward will be eliminated.
\begin{lemma} \label{lo}

With probability at least $1-\delta$ for $\delta \in (0,1)$, for any $\ell \in \mathbb{N}$ and $z \in \mathcal{Z}_\ell$ such that $\langle z^* - z, \theta^* \rangle > 2\epsilon_\ell$,
\begin{align*}
    \max_{z' \in \calZ_\ell} \langle z' - z, \widehat{\theta}_\ell  \rangle > \epsilon_\ell.
\end{align*}    
\end{lemma}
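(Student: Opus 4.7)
The plan is to invoke the two preceding lemmas to reduce Lemma \ref{lo} to elementary inequalities. The key observation is that the good event in Lemma \ref{cor} (uniform control $|\langle z-z^*, \widehat{\theta}_\ell - \theta^*\rangle| \le \epsilon_\ell$ for all $z \in \calZ$ and all $\ell$) together with the good event in Lemma \ref{li} ($z^\ast \in \calZ_\ell$ for all $\ell$) hold simultaneously on the same probability-$1-\delta$ event, since Lemma \ref{li} is itself a deterministic consequence of Lemma \ref{cor}. Hence I can condition on this single event for the remainder of the argument.

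On this event, fix any $\ell \in \mathbb{N}$ and any $z \in \calZ_\ell$ with $\langle z^\ast - z, \theta^\ast\rangle > 2\epsilon_\ell$. Because $z^\ast \in \calZ_\ell$ (Lemma \ref{li}), $z^\ast$ is a valid candidate in the max, so
\[
\max_{z' \in \calZ_\ell}\langle z' - z, \widehat{\theta}_\ell\rangle \;\geq\; \langle z^\ast - z, \widehat{\theta}_\ell\rangle.
\]
I then decompose
\[
\langle z^\ast - z, \widehat{\theta}_\ell\rangle \;=\; \langle z^\ast - z, \theta^\ast\rangle \;-\; \langle z^\ast - z, \theta^\ast - \widehat{\theta}_\ell\rangle,
\]
bound the first term below by $2\epsilon_\ell$ from the hypothesis, and the second term in absolute value by $\epsilon_\ell$ via Lemma \ref{cor} applied to $z$. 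This yields $\langle z^\ast - z, \widehat{\theta}_\ell\rangle > 2\epsilon_\ell - \epsilon_\ell = \epsilon_\ell$, which combined with the previous display gives the claim.

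The argument is essentially a triangle-inequality cleanup of Lemma \ref{cor}, so there is no real obstacle; the only subtle point is to verify that Lemma \ref{cor} legitimately applies to $z \in \calZ$ (not only to active $z \in \calZ_\ell$), which is immediate because Lemma \ref{cor} is stated uniformly over $z \in \calZ$. Together, Lemmas \ref{cor}, \ref{li}, and \ref{lo} then imply $\delta$-PAC correctness: Lemma \ref{li} keeps $z^\ast$ in every $\calZ_\ell$, and Lemma \ref{lo} forces every $z \neq z^\ast$ to be eliminated by round $\lceil \log_2(2/\Delta) \rceil$, where $\Delta = \min_{z \neq z^\ast}\langle z^\ast - z, \theta^\ast\rangle$.
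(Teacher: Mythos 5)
Your proof is correct and follows essentially the same route as the paper's: lower-bound the max by the term at $z^\ast$, decompose $\langle z^\ast - z, \widehat{\theta}_\ell\rangle$ into $\langle z^\ast - z, \theta^\ast\rangle$ plus an estimation-error term, and apply Lemma~\ref{cor} to get $> 2\epsilon_\ell - \epsilon_\ell = \epsilon_\ell$. Your explicit remark that $z^\ast \in \calZ_\ell$ (via Lemma~\ref{li}) is needed to justify the first inequality is a point the paper leaves implicit, but it is the same argument.
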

\begin{proof}

We know that with probability $1-\delta$ for all $\ell \in \mathbb{N}$,
\begin{align*}
    \max_{z' \in \calZ_\ell} \langle z' - z, \widehat{\theta}_\ell \rangle &\geq \langle z^* - z, \widehat{\theta}_\ell \rangle \\
    &= \langle z^* - z, \widehat{\theta}_\ell - \theta^* \rangle + \langle z^* - z, \theta^* \rangle \\
    &\overset{(a)}{>} -\epsilon_\ell + 2\epsilon_\ell \\
    &= \epsilon_\ell,
\end{align*}
where $(a)$ follows from Lemma~\ref{cor}.
\end{proof}

\subsubsection{Proof of Sample Complexity}

\emph{Overview of Sample Complexity Proof.}
In this section, we prove an upper bound on the sample complexity of Algorithm \ref{alg:Het} for transductive linear bandits. We begin by establishing some basic facts about the partial ordering of Hermetian positive definite matrices. Let $A$ and $B$ be Hermitian positive definite matrices in $\mathbb{R}^{d\times d}$ and define $A\succeq B$ if $(A-B)$ is positive definite.

\begin{lemma}\label{PD1}
Let $A$ and $B$ be Hermetian Positive Definite matrices in $\mathbb{R}^{d\times d}$,
\begin{equation*}
    A \succeq B \iff I \succeq A^{-1/2} B A^{-1/2}.
\end{equation*}
\end{lemma}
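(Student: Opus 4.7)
The plan is to use the standard fact that congruence transformations by an invertible Hermitian matrix preserve positive semi-definiteness. Specifically, for any Hermitian $M \in \mathbb{R}^{d \times d}$ and any invertible matrix $C \in \mathbb{R}^{d \times d}$, one has $M \succeq 0$ if and only if $C^\top M C \succeq 0$. The ``only if'' direction follows because $v^\top C^\top M C v = (Cv)^\top M (Cv) \geq 0$ for every $v$, and the ``if'' direction follows by applying the same observation with $C^{-1}$ in place of $C$.

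With that tool in hand, I would take $C = A^{-1/2}$, which exists and is symmetric positive definite because $A$ is. The forward direction is then immediate: if $A \succeq B$, then $A - B \succeq 0$, and applying $A^{-1/2}(\cdot) A^{-1/2}$ to both sides gives
\begin{equation*}
A^{-1/2}(A - B) A^{-1/2} = I - A^{-1/2} B A^{-1/2} \succeq 0,
\end{equation*}
which is precisely $I \succeq A^{-1/2} B A^{-1/2}$. For the converse, the same congruence but with $C = A^{1/2}$ sends $I - A^{-1/2} B A^{-1/2} \succeq 0$ back to $A - B \succeq 0$, i.e., $A \succeq B$.

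There is essentially no obstacle: the only thing to verify carefully is that $A^{1/2}$ and $A^{-1/2}$ are well-defined symmetric positive definite matrices, which follows from the spectral theorem applied to the positive definite matrix $A$. I would state this once at the outset and then the two directions are a single line of algebra each.
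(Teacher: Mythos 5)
Your proof is correct and uses essentially the same idea as the paper: the paper's argument substitutes $y = A^{1/2}x$ into the quadratic forms, which is exactly the congruence transformation $M \mapsto A^{-1/2} M A^{-1/2}$ that you invoke. If anything, your write-up is the more complete of the two, since you treat the converse direction explicitly (via $C = A^{1/2}$) whereas the paper's one-line computation only spells out one direction.
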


\begin{proof}
For any $x \in \mathbb{R}^d$ we define $y = A^{1/2}x$ and 
    \begin{align*}
        x^{\top}Ax &= x^{\top}A^{1/2}A^{-1/2}A A^{-1/2} A^{1/2}x \\
                   &= y^{\top}I y \\
                   &\succeq y^{\top}A^{-1/2}B A^{-1/2} y.
    \end{align*}
\end{proof}

\begin{lemma}\label{PD2}
Let $A$ and $B$ be Hermitian positive definite matrices in $\mathbb{R}^{d\times d}$,
    $$A \succeq B \iff B^{-1} \succeq A^{-1}.$$
\end{lemma}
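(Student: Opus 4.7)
The plan is to leverage the preceding Lemma \ref{PD1} together with the elementary fact that if $M$ is a positive definite matrix with $I \succeq M$, then $M^{-1} \succeq I$ (since all eigenvalues of $M$ lie in $(0,1]$ and so all eigenvalues of $M^{-1}$ lie in $[1,\infty)$). Once this is available, the forward implication is a short algebraic manipulation, and the reverse implication follows by symmetry after noting that $(A^{-1})^{-1} = A$ and $(B^{-1})^{-1} = B$.

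For the forward direction, I would start from $A \succeq B$ and apply Lemma \ref{PD1} to obtain $I \succeq A^{-1/2} B A^{-1/2}$. Setting $M := A^{-1/2} B A^{-1/2}$, which is Hermitian positive definite, the eigenvalue observation above gives $M^{-1} \succeq I$, i.e., $A^{1/2} B^{-1} A^{1/2} \succeq I$. Then, for any $y \in \mathbb{R}^d$, substituting $x = A^{-1/2} y$ in the inequality $x^\top A^{1/2} B^{-1} A^{1/2} x \geq x^\top x$ yields
\begin{equation*}
y^\top B^{-1} y \;\geq\; y^\top A^{-1} y,
\end{equation*}
which is precisely $B^{-1} \succeq A^{-1}$.

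For the reverse direction, I would simply apply the forward direction to the pair $(B^{-1}, A^{-1})$, which are themselves Hermitian positive definite. The hypothesis $B^{-1} \succeq A^{-1}$ then yields $(A^{-1})^{-1} \succeq (B^{-1})^{-1}$, i.e., $A \succeq B$.

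The main obstacle here is essentially nonexistent: this is a classical monotonicity property of matrix inversion and the proof is almost a one-liner once Lemma \ref{PD1} is in hand. The only place one needs minor care is in justifying that positive definiteness is preserved under $A^{-1/2}(\cdot)A^{-1/2}$ conjugation and under inversion, and in handling the eigenvalue step cleanly (which can alternatively be phrased via simultaneous diagonalization of $A$ and $B$, but the congruence-plus-inversion approach sketched above is the most economical).
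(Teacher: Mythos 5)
Your proof is correct, and it takes a mildly but genuinely different route through the middle of the argument. Both proofs begin the same way, invoking Lemma~\ref{PD1} to rewrite $A \succeq B$ as $I \succeq A^{-1/2}BA^{-1/2}$. From there the paper passes to $I \succeq B^{1/2}A^{-1}B^{1/2}$ by observing that $A^{-1/2}BA^{-1/2}$ and $B^{1/2}A^{-1}B^{1/2}$ are similar (products $XY$ and $YX$ with $X=A^{-1/2}B^{1/2}$, $Y = B^{1/2}A^{-1/2}$), and then applies Lemma~\ref{PD1} a second time, with the roles played by $B^{-1}$ and $A^{-1}$, to conclude $B^{-1}\succeq A^{-1}$. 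You instead invert the conjugated matrix directly, using the fact that $I \succeq M$ for positive definite $M$ forces the spectrum of $M^{-1}$ into $[1,\infty)$ so that $M^{-1} = A^{1/2}B^{-1}A^{1/2}\succeq I$, and then undo the congruence via the substitution $x = A^{-1/2}y$; the reverse implication comes for free by applying the forward direction to the pair $(B^{-1},A^{-1})$. The two arguments are of comparable length. Yours buys a little robustness: it needs only the scalar monotonicity of $t\mapsto t^{-1}$ on the spectrum rather than the $XY\sim YX$ similarity fact, and by proving the two implications separately it does not lean on Lemma~\ref{PD1} being a full equivalence (the paper's chain of $\iff$'s does, even though the stated proof of Lemma~\ref{PD1} only verifies one direction explicitly). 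Both arguments are sound.
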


\begin{proof}
We use Lemma \ref{PD1} twice in the following proof.
    \begin{align*}
        A \succeq B &\iff I \succeq A^{-1/2} B A^{-1/2} \\
                 &\iff I \succeq B^{1/2} A^{-1/2} A^{-1/2}  B^{1/2} \\
                 &\iff I \succeq B^{1/2} A^{-1}  B^{1/2} \\
                 &\iff B^{-1} \succeq A^{-1}.
    \end{align*}
    where line 2 follows because $A^{-1/2} B A^{-1/2}$ is similar to $B^{1/2} A^{-1/2} A^{-1/2}  B^{1/2}$ and both are Hermetian positive definite.
\end{proof}

Furthermore, for $\mathcal{V} \subset \mathbb{R}^d$, define $g: (\mathcal{V}, P_\mathcal{X}) \to \mathbb{R}$ as $g(v, \lambda) = \|v\|_{(\sum_x \lambda_x xx^{\top}/\widehat{\sigma}_x^2)^{-1}} $,
$f: (\mathcal{V} ,P_\mathcal{X}) \to \mathbb{R}$ as $f(v, \lambda) =  \|v\|_{(\sum_x \lambda_x xx^{\top}/\sigma_x^2)^{-1}}$ and
\begin{equation*}
 q^*(\mathcal{V})= \min_{\lambda \in P_\mathcal{X}} \max_{v \in \mathcal{V}} f(v, \lambda), \quad  q(\mathcal{V}) = \min_{\lambda \in P_\mathcal{X}} \max_{v \in \mathcal{V}} g(v, \lambda).
\end{equation*}
Lemma~\ref{funcbound} establishes a multiplicative bound of the form 
\begin{equation*}
    \left (1-C_{\Gamma, \delta}\right ) g(v, \lambda) \leq f(v, \lambda) \leq \left (1+C_{\Gamma, \delta}\right ) g(v, \lambda)
\end{equation*}
using Lemmas~\ref{concmult} and \ref{PD2}.

\begin{lemma} \label{funcbound} Define $\widehat{\sigma}_{x}^2 = \min \left \{ \max \left \{x^{\top}\widehat{\Sigma}_{\Gamma} x , \sigma_{\min}^2 \right \}, \sigma_{\max}^2 \right \}$. For a given $\lambda$, define
\begin{equation*}
    \Upsilon =  \mathrm{diag}(\{\sigma_{x}^2\}_{x \in \calX}) \times \mathrm{diag}(\{1/\lambda_x\}_{x \in \calX}), \ \widehat{\Upsilon}^{-1} = \mathrm{diag}(\{\widehat{\sigma}_{x}^2\}_{x \in \calX}) \times \mathrm{diag}(\{1/\lambda_x \}_{x \in \calX}).
\end{equation*}
Assume $C_{\Gamma, \delta}< 1$ and $v \in \mathbb{R}^d$, then with probability $1-\delta/2$ for $\delta \in (0,1)$ the following holds true:
    \begin{align*}
        \left ( 1 -C_{\Gamma, \delta}\right ) v^{\top}(X^{\top}\widehat{\Upsilon}^{-1}X)^{-1}v &\leq v^{\top}(X^{\top}\Upsilon^{-1}X)^{-1}v \\
        &\leq \left ( 1 +C_{\Gamma, \delta}\right ) v^{\top}(X^{\top}\widehat{\Upsilon}^{-1}X)^{-1}v,
    \end{align*}
or equivalently
    \begin{align*}
         \left (1-C_{\Gamma, \delta}\right ) g(v, \lambda) \leq f(v, \lambda) \leq \left (1+C_{\Gamma, \delta}\right ) g(v, \lambda).
    \end{align*}

\end{lemma}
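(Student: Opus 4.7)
The plan is to reduce the matrix bound to a pointwise scalar bound on $\sigma_x^2/\widehat{\sigma}_x^2$, propagate that scalar bound to a Loewner ordering on the diagonal weighting matrices, push it through the quadratic form $X^\top(\cdot)X$, and finally invert using Lemma~\ref{PD2}. All probabilistic content is absorbed in the first step; everything after is deterministic matrix algebra.

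\textbf{Step 1 (probability).} Invoke Lemma~\ref{concmult} to obtain, with probability at least $1-\delta/2$ and simultaneously for every $x\in\mathcal{X}$,
\begin{equation*}
1 - C_{\Gamma,\delta} \;<\; \frac{\sigma_x^2}{\widehat{\sigma}_x^2} \;<\; 1 + C_{\Gamma,\delta}.
\end{equation*}
Condition on this event for the rest of the argument.

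\textbf{Step 2 (diagonal Loewner bounds).} Rearranging and taking reciprocals (valid since $C_{\Gamma,\delta} < 1$ by assumption, so all quantities are strictly positive) gives
\begin{equation*}
\frac{1}{1+C_{\Gamma,\delta}}\,\widehat{\sigma}_x^{-2} \;\leq\; \sigma_x^{-2} \;\leq\; \frac{1}{1-C_{\Gamma,\delta}}\,\widehat{\sigma}_x^{-2}.
\end{equation*}
Since both $\Upsilon^{-1}$ and $\widehat{\Upsilon}^{-1}$ are diagonal with entries $\lambda_x/\sigma_x^2$ and $\lambda_x/\widehat{\sigma}_x^2$ respectively, this yields the Loewner ordering
\begin{equation*}
(1+C_{\Gamma,\delta})^{-1}\,\widehat{\Upsilon}^{-1} \;\preceq\; \Upsilon^{-1} \;\preceq\; (1-C_{\Gamma,\delta})^{-1}\,\widehat{\Upsilon}^{-1}.
\end{equation*}

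\textbf{Step 3 (push through $X^\top(\cdot)X$ and invert).} Congruence by the same matrix preserves the Loewner order, so sandwiching gives
\begin{equation*}
(1+C_{\Gamma,\delta})^{-1}\,X^\top\widehat{\Upsilon}^{-1}X \;\preceq\; X^\top\Upsilon^{-1}X \;\preceq\; (1-C_{\Gamma,\delta})^{-1}\,X^\top\widehat{\Upsilon}^{-1}X.
\end{equation*}
Now apply Lemma~\ref{PD2}, which flips the ordering under inversion of positive-definite matrices, to obtain
\begin{equation*}
(1-C_{\Gamma,\delta})\bigl(X^\top\widehat{\Upsilon}^{-1}X\bigr)^{-1} \;\preceq\; \bigl(X^\top\Upsilon^{-1}X\bigr)^{-1} \;\preceq\; (1+C_{\Gamma,\delta})\bigl(X^\top\widehat{\Upsilon}^{-1}X\bigr)^{-1}.
\end{equation*}
Multiplying on the left by $v^\top$ and on the right by $v$ yields the first (matrix-form) conclusion directly. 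The equivalent $f,g$ reformulation follows because $f(v,\lambda)^2,\ g(v,\lambda)^2$ are exactly these quadratic forms, and the bound $(1-C)a^2 \le b^2 \le (1+C)a^2$ for nonnegative $a,b$ and $C\in[0,1]$ implies $(1-C)a \le b \le (1+C)a$ via $\sqrt{1-C}\ge 1-C$ and $\sqrt{1+C}\le 1+C$.

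\textbf{Main obstacle.} The argument has no deep obstacle; the only substantive move is translating the elementwise multiplicative error on the variance estimates into a \emph{two-sided} Loewner statement that survives both the congruence $X^\top(\cdot)X$ and the inversion. The use of Lemma~\ref{PD2} is the key algebraic ingredient, and the assumption $C_{\Gamma,\delta}<1$ is exactly what keeps $(1-C_{\Gamma,\delta})^{-1}$ finite and $X^\top\widehat{\Upsilon}^{-1}X$ invertible along with $X^\top\Upsilon^{-1}X$ (given $\mathrm{span}(\mathcal{X})=\mathbb{R}^d$ from the problem setting).
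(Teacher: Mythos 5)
Your proposal is correct and follows essentially the same route as the paper's proof: invoke Lemma~\ref{concmult} for the high-probability multiplicative bound on $\sigma_x^2/\widehat{\sigma}_x^2$, convert it to a two-sided Loewner sandwich between $X^\top\Upsilon^{-1}X$ and $X^\top\widehat{\Upsilon}^{-1}X$, and invert via Lemma~\ref{PD2}; the only cosmetic difference is that you bound $\Upsilon^{-1}$ in terms of $\widehat{\Upsilon}^{-1}$ with factors $(1\pm C_{\Gamma,\delta})^{-1}$ whereas the paper writes the sandwich in the other direction, which is equivalent after inversion. Your closing remark on passing from squared quadratic forms to the $f,g$ statement via $\sqrt{1\pm C}$ is a small point the paper glosses over, and is handled correctly.
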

\begin{proof}

Using Lemma~\ref{concmult}, we establish that
\begin{align*}
v^{\top}X^{\top}\widehat{\Upsilon}^{-1}Xv &= v^{\top}X^{\top}\widehat{\Upsilon}^{-1} \Upsilon \Upsilon^{-1}Xv \\
                               &= v^{\top}X^{\top} \Upsilon^{-1/2} \widehat{\Upsilon}^{-1} \Upsilon \Upsilon^{-1/2}Xv  \\
                               &\leq  \left (1 +C_{\Gamma, \delta}\right )  v^{\top}X^{\top}\Upsilon^{-1}Xv
\end{align*}
and
\begin{align*}
v^{\top}X^{\top}\widehat{\Upsilon}^{-1}Xv &= v^{\top}X^{\top}\widehat{\Upsilon}^{-1} \Upsilon \Upsilon^{-1}Xv \\
                               &= v^{\top}X^{\top} \Upsilon^{-1/2} \widehat{\Upsilon}^{-1} \Upsilon \Upsilon^{-1/2}Xv  \\
                               &\geq  \left (1 -C_{\Gamma, \delta}\right )  v^{\top}X^{\top}\Upsilon^{-1}Xv.
\end{align*}
Leveraging the partial ordering of Hermitian positive definite matrices and Lemma~\ref{PD2}, we can then represent these inequalities as
\begin{align*}
    & \left ( 1-C_{\Gamma, \delta} \right ) X^{\top}\Upsilon^{-1}X \preceq X^{\top}\widehat{\Upsilon}^{-1}X \preceq \left ( 1 +C_{\Gamma, \delta}\right )  X^{\top}\Upsilon^{-1}X \iff \\
    &\frac{1}{1 -C_{\Gamma, \delta}} \left (X^{\top}\Upsilon^{-1}X \right )^{-1} \succeq \left (X^{\top}\widehat{\Upsilon}^{-1}X \right )^{-1} \succeq \frac{1}{1 +C_{\Gamma, \delta}} \left ( X^{\top}\Upsilon^{-1}X \right )^{-1}.
\end{align*}
This implies that
\begin{equation*}
    \left (X^{\top}\Upsilon^{-1}X \right )^{-1} \succeq \left (1 -C_{\Gamma, \delta}\right ) \left (X^{\top}\widehat{\Upsilon}^{-1}X \right )^{-1}
\end{equation*}
and
\begin{equation*}
    \left (X^{\top}\Upsilon^{-1}X \right )^{-1} \preceq \left (1 +C_{\Gamma, \delta}\right )\left (X^{\top}\widehat{\Upsilon}^{-1}X \right )^{-1}.
\end{equation*}
By combining these inequalities we arrive at the result.
\end{proof}

We now want to use this multiplicative bound to define the relationship between $q(\mathcal{V})$ and $q^*(\mathcal{V})$ and eventually relate the sample complexity upper bound of Alg.~\ref{alg:Het} to the lower bound established in Theorem \ref{thm:lb}. In order to accomplish this goal, we prove some general results concerning minimax problems in Lemmas \ref{1d} and \ref{2d}.

\begin{lemma}\label{1d}
    For $c > 0 $,  $v \in \mathbb{R}^d$ and $\lambda \in P_{\calX}$ , if
    \begin{equation*}
    (1-c) g(v, \lambda) \leq f(v, \lambda) \leq (1+c) g(v, \lambda),
    \end{equation*}
    then
    \begin{align*}
    (1-c) \max_{v \in \mathcal{V}} g(v, \lambda) &\leq \max_{v \in \mathcal{V}} f(v, \lambda) \\ &\leq
    (1+c) \max_{v \in \mathcal{V}} g(v, \lambda).
    \end{align*}
    
\end{lemma}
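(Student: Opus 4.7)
The plan is to prove this by a direct pointwise monotonicity argument, since the hypothesis is a sandwich inequality that holds for every fixed $v$ and $\lambda$, and taking a maximum preserves pointwise inequalities. There is no real technical obstacle here; the statement is essentially the observation that $\max$ commutes with multiplication by a nonnegative constant and is monotone in its argument.

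First, I would fix $\lambda \in P_{\calX}$ and observe that the hypothesis gives, for every $v \in \mathcal{V}$, the two inequalities $f(v,\lambda) \leq (1+c) g(v,\lambda)$ and $f(v,\lambda) \geq (1-c) g(v,\lambda)$. For the upper bound in the conclusion, take the maximum over $v \in \mathcal{V}$ on both sides of the first inequality: since the map $v \mapsto (1+c)g(v,\lambda)$ is just a nonnegative scaling of $v \mapsto g(v,\lambda)$, its maximum equals $(1+c)\max_{v\in\mathcal{V}} g(v,\lambda)$, and $\max_{v\in\mathcal{V}} f(v,\lambda) \leq \max_{v\in\mathcal{V}} (1+c)g(v,\lambda)$ by monotonicity of the maximum.

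For the lower bound, I would let $v^\star \in \arg\max_{v \in \mathcal{V}} g(v,\lambda)$ (which exists when $\mathcal{V}$ is finite, the case of interest in the paper; in general one can pass to a supremizing sequence). Then
\[
(1-c)\max_{v\in\mathcal{V}} g(v,\lambda) = (1-c) g(v^\star,\lambda) \leq f(v^\star,\lambda) \leq \max_{v\in\mathcal{V}} f(v,\lambda),
\]
where the first inequality is the lower half of the hypothesis applied at $v=v^\star$, and the second is the definition of the maximum. Chaining these two bounds yields the claim. The only caveat worth flagging is the sign of $(1-c)$: the statement assumes $c>0$ but does not require $c<1$, and if $c \geq 1$ the lower inequality is trivial since $g \geq 0$, so the argument covers all cases without modification.
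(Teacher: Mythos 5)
Your proof is correct and follows essentially the same route as the paper's: the paper also evaluates the sandwich at the maximizers of $g$ and $f$ and chains the resulting inequalities, which is just the monotonicity-of-max observation you make explicit. Your added remarks on the existence of maximizers and the sign of $(1-c)$ are fine but not needed beyond what the paper does.
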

\begin{proof}
    For a given $\lambda \in P_\mathcal{X}$, define $v^\dagger(\lambda) = \arg \max_{v \in \mathcal{V}} g(v, \lambda)$ and $v^*(\lambda) = \arg \max_{v \in \mathcal{V}} f(v, \lambda)$, then
    \begin{align*}
    (1-c) g(v^\dagger(\lambda), \lambda) &\leq f(v^\dagger(\lambda), \lambda) \\
    & \leq f(v^*(\lambda), \lambda) \\
    & \leq (1+c)g(v^*(\lambda), \lambda) \\
    & \leq (1+c)g(v^\dagger(\lambda), \lambda).
    \end{align*}
\end{proof}

\begin{lemma}\label{2d}
 For $c > 0$, $v \in \mathbb{R}^d$ and $\lambda \in P_{\calX}$, if
\begin{align*}
(1-c) g(v, \lambda) \leq f(v, \lambda) \leq (1+c) g(v, \lambda),
\end{align*}
then
\begin{align*}
    (1-c) \min_{\lambda \in P_\mathcal{X}} \max_{v \in \mathcal{V}} g(v, \lambda) &\leq \min_{\lambda \in P_\mathcal{X}}  \max_{v \in \mathcal{V}} f(v, \lambda) \\
    &\leq (1+c)  \min_{\lambda \in P_\mathcal{X}} \max_{v \in \mathcal{V}} g(v, \lambda).
\end{align*}

\end{lemma}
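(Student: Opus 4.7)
The plan is to reduce this lemma to Lemma~\ref{1d} by choosing convenient evaluation points for $\lambda$ on each side of the inequality. Since Lemma~\ref{1d} already gives us, for \emph{every} fixed $\lambda$, the sandwich $(1-c)\max_{v}g(v,\lambda)\leq \max_{v}f(v,\lambda)\leq (1+c)\max_{v}g(v,\lambda)$, the only remaining work is to pass from "for all $\lambda$" to the two min-max quantities in a way that the inequalities survive.

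For the upper bound, I would let $\lambda^{\dagger}\in\arg\min_{\lambda\in P_{\mathcal{X}}}\max_{v\in\mathcal{V}}g(v,\lambda)$ be an optimizer of the $g$-problem. Plugging $\lambda^{\dagger}$ into the $f$-problem gives
\begin{equation*}
\min_{\lambda}\max_{v}f(v,\lambda)\;\leq\;\max_{v}f(v,\lambda^{\dagger})\;\leq\;(1+c)\max_{v}g(v,\lambda^{\dagger})\;=\;(1+c)\min_{\lambda}\max_{v}g(v,\lambda),
\end{equation*}
where the middle inequality is Lemma~\ref{1d} applied at $\lambda=\lambda^{\dagger}$.

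For the lower bound I would symmetrically let $\lambda^{\ast}\in\arg\min_{\lambda\in P_{\mathcal{X}}}\max_{v\in\mathcal{V}}f(v,\lambda)$ be an optimizer of the $f$-problem and write
\begin{equation*}
\min_{\lambda}\max_{v}f(v,\lambda)\;=\;\max_{v}f(v,\lambda^{\ast})\;\geq\;(1-c)\max_{v}g(v,\lambda^{\ast})\;\geq\;(1-c)\min_{\lambda}\max_{v}g(v,\lambda),
\end{equation*}
again invoking Lemma~\ref{1d} at $\lambda=\lambda^{\ast}$ and then using that $\max_{v}g(v,\lambda^{\ast})\geq\min_{\lambda}\max_{v}g(v,\lambda)$ by definition of the minimum.

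I do not expect any genuine obstacle: the argument is the standard ``pick an optimizer on the side you want to bound and use the other side's inequality'' trick. The only mild subtlety is that the existence of the minimizers $\lambda^{\dagger}$ and $\lambda^{\ast}$ should be justified (or avoided by passing to infimizing sequences and taking limits, since $g$ and $f$ are continuous in $\lambda$ on the compact simplex $P_{\mathcal{X}}$ whenever the relevant matrices are invertible, which is guaranteed by $\mathrm{span}(\mathcal{X})=\mathbb{R}^{d}$). Once that is noted, the two displays above together yield the claimed two-sided bound on $\min_{\lambda}\max_{v}f(v,\lambda)$.
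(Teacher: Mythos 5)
Your proposal is correct and is essentially identical to the paper's own argument: the paper combines the same two steps (evaluate the $f$-problem at the $g$-optimizer $\lambda^{\dagger}$ for the upper bound, and apply Lemma~\ref{1d} at the $f$-optimizer $\lambda^{\ast}$ for the lower bound) into a single chain of inequalities. Your remark about justifying existence of the minimizers via compactness of $P_{\mathcal{X}}$ and continuity is a minor point the paper leaves implicit.
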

\begin{proof}
For a given $\lambda \in P_\mathcal{X}$, define $v^\dagger(\lambda) = \arg \max_{v \in \mathcal{V}} g(v, \lambda)$, $v^*(\lambda) = \arg \max_{v \in \mathcal{V}} f(v, \lambda)$, $ \lambda^* = \arg \min_{\lambda \in P_\mathcal{X}}  \max_{v \in \mathcal{V}} f(v,\lambda)$ and $\lambda^\dagger = \arg \min_{\lambda \in P_\mathcal{X}} \max_{v \in \mathcal{V}} g(v, \lambda)$, then
\begin{align*}
    (1-c)g(v^\dagger(\lambda^\dagger), \lambda^\dagger) &\leq (1-c)  g(v^\dagger(\lambda^*),\lambda^*) \\
        &\overset{(a)}{\leq} f(v^*(\lambda^*),\lambda^*) \\
        &\leq  f(v^*(\lambda^\dagger), \lambda^\dagger)\\
        &\overset{(b)}{\leq} (1+c) g(v^\dagger(\lambda^\dagger), \lambda^\dagger)
\end{align*}
where lines $(a)$ and $(b)$ follows from Lemma~\ref{1d}.
\end{proof}

Leveraging Lemma \ref{2d}, we bound $q^*(\mathcal{V})$ in terms of $q(\mathcal{V})$ in Theorem \ref{rel}.  

\begin{lemma} \label{rel} With probability $1-\delta/2$ for $\delta \in (0,1)$,
\begin{equation*}
    \left (1 -C_{\Gamma, \delta}\right ) q(\mathcal{V}) \leq q^*(\mathcal{V})\leq \left (1 +C_{\Gamma, \delta}\right ) q(\mathcal{V}).
\end{equation*}
\end{lemma}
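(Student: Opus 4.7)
The plan is to assemble the conclusion as a direct composition of the two immediately preceding lemmas, so the proof will be essentially a one-line corollary once the right quantifiers are lined up. The content in Lemma \ref{funcbound} gives a \emph{pointwise} multiplicative comparison between $f(v,\lambda)$ and $g(v,\lambda)$, and Lemma \ref{2d} is precisely the statement that such a pointwise bound transfers to the minimax value. So I would not attempt any fresh estimation of the quadratic forms here; I would simply feed one lemma into the other.

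Concretely, first I would condition on the high-probability event produced by Lemma \ref{funcbound}, which occurs with probability at least $1-\delta/2$ and on which
\begin{equation*}
(1-C_{\Gamma,\delta})\, g(v,\lambda) \;\leq\; f(v,\lambda) \;\leq\; (1+C_{\Gamma,\delta})\, g(v,\lambda)
\end{equation*}
holds uniformly over every $v\in\mathbb{R}^d$ and every $\lambda\in P_{\mathcal{X}}$. (The uniformity is essential, since Lemma \ref{2d} needs to pick $v^{\dagger}(\lambda^{*})$ and $v^{*}(\lambda^{\dagger})$ at arbitrary $\lambda$'s; the uniformity itself is inherited from the uniform multiplicative variance bound of Lemma \ref{concmult}, which is how Lemma \ref{funcbound} is proved.) Restricting the pointwise inequality to the particular test set $\mathcal{V}\subset\mathbb{R}^d$ appearing in the statement loses nothing.

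Next I would apply Lemma \ref{2d} verbatim with $c=C_{\Gamma,\delta}$, the function pair $(f,g)$ above, and the set $\mathcal{V}$. Its conclusion reads
\begin{equation*}
(1-C_{\Gamma,\delta})\,\min_{\lambda\in P_{\mathcal{X}}}\max_{v\in\mathcal{V}} g(v,\lambda) \;\leq\; \min_{\lambda\in P_{\mathcal{X}}}\max_{v\in\mathcal{V}} f(v,\lambda) \;\leq\; (1+C_{\Gamma,\delta})\,\min_{\lambda\in P_{\mathcal{X}}}\max_{v\in\mathcal{V}} g(v,\lambda),
\end{equation*}
which by the definitions of $q(\mathcal{V})$ and $q^{*}(\mathcal{V})$ is exactly the desired sandwich $(1-C_{\Gamma,\delta})\,q(\mathcal{V})\leq q^{*}(\mathcal{V})\leq (1+C_{\Gamma,\delta})\,q(\mathcal{V})$. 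The probability $1-\delta/2$ is inherited from the Lemma \ref{funcbound} event and requires no additional union bound.

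There is no real obstacle; the only thing worth being careful about is making sure the hypothesis of Lemma \ref{2d} is met in the strong sense (all $v$, all $\lambda$ simultaneously), which is why I would emphasise that the event of Lemma \ref{funcbound} is a single high-probability event on which the pointwise bound holds uniformly, as opposed to a family of events indexed by $(v,\lambda)$. Once that is in place, no further computation is needed.
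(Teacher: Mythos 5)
Your proposal is correct and is essentially identical to the paper's own proof: it conditions on the high-probability event of Lemma~\ref{funcbound} and then applies Lemma~\ref{2d} with $c = C_{\Gamma,\delta}$ to pass the pointwise multiplicative bound to the minimax values. Your added remark about the uniformity of the event over all $(v,\lambda)$ is a point the paper leaves implicit, but it does not change the argument.
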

\begin{proof}
    From Lemma~\ref{funcbound}, we know that
    \begin{align*}
         \left (1-C_{\Gamma, \delta}\right ) g(v, \lambda) \leq f(v, \lambda) \leq \left (1+C_{\Gamma, \delta}\right ) g(v, \lambda).
    \end{align*}
    According to Lemma~\ref{2d}, this implies
    \begin{align*}
         \left (1-C_{\Gamma, \delta}\right ) \min_{\lambda \in P_\mathcal{X}} \max_{v \in \mathcal{V}}  g(v, \lambda) \leq \min_{\lambda \in P_\mathcal{X}} \max_{v \in \mathcal{V}}  f(v, \lambda) \leq \left (1+C_{\Gamma, \delta}\right ) \min_{\lambda \in P_\mathcal{X}} \max_{v \in \mathcal{V}}  g(v, \lambda).
    \end{align*}
\end{proof}

We define the sparsity of any design $\lambda$ using Caratheodory’s Theorem to motivate the loss from taking the ceiling of the design multiplied by the sample size.
\begin{lemma}\label{suplambda}[Appendix 8, \cite{soare2015sequential}]
    In Algorithm \ref{alg:Het}, the support of $\widehat{\lambda}_\ell$ for all $\ell \in \mathbb{N}$ is $d(d+1)/2 + 1$.
\end{lemma}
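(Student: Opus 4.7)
The plan is to apply Carath\'eodory's theorem to the convex hull of the rank-one matrices indexed by $\mathcal{X}$, exploiting the fact that the objective $q(\lambda, \mathcal{V})$ depends on $\lambda$ only through the $d \times d$ information matrix $A(\lambda) := \sum_{x \in \mathcal{X}} \widehat{\sigma}_x^{-2} \lambda_x xx^{\top}$.

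First, I would observe that
\begin{equation*}
q(\lambda, \mathcal{V}) = \max_{v \in \mathcal{V}} v^{\top} A(\lambda)^{-1} v,
\end{equation*}
so any two designs $\lambda, \lambda' \in P_{\mathcal{X}}$ with $A(\lambda) = A(\lambda')$ achieve the same objective value. Hence it suffices to show that for any optimal $\widehat{\lambda}_\ell$, the matrix $A(\widehat{\lambda}_\ell)$ can be reproduced by a distribution on $\mathcal{X}$ supported on at most $M + 1 = d(d+1)/2 + 1$ atoms.

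Second, I would note that $A(\widehat{\lambda}_\ell)$ is a convex combination of the symmetric matrices $\{\widehat{\sigma}_x^{-2} xx^{\top} : x \in \mathcal{X}\}$, which sit inside the real vector space of $d \times d$ symmetric matrices, an ambient space of dimension $M = d(d+1)/2$. Applying Carath\'eodory's theorem in this $M$-dimensional vector space to the convex hull of those matrices yields a distribution $\widehat{\lambda}'_\ell \in P_{\mathcal{X}}$ supported on at most $M + 1$ elements of $\mathcal{X}$ with $A(\widehat{\lambda}'_\ell) = A(\widehat{\lambda}_\ell)$. Combined with the first step, $\widehat{\lambda}'_\ell$ is again a minimizer of $q(\cdot, \mathcal{V})$, so without loss of generality the algorithm returns $\widehat{\lambda}_\ell = \widehat{\lambda}'_\ell$.

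No step poses a genuine obstacle; the only care required is to invoke Carath\'eodory in the vector space of symmetric matrices (dimension $M$) rather than in $\mathbb{R}^d$ (dimension $d$), since $\lambda$ only affects the objective through the full matrix $A(\lambda)$. This mirrors the sketch cited from Appendix~8 of~\cite{soare2015sequential}.
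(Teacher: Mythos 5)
Your proposal is correct and follows essentially the same route as the paper: both apply Carath\'eodory's theorem in the $M=d(d+1)/2$-dimensional space of symmetric $d\times d$ matrices to the convex hull of the (variance-weighted) rank-one matrices, yielding a design supported on at most $M+1$ points. Your added observation that the objective depends on $\lambda$ only through $A(\lambda)$ — so the sparsified design remains a minimizer — makes explicit a step the paper leaves implicit, but it is not a different argument.
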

\begin{proof}
Any design matrix in $\mathbb{R}^{d\times d}$ is symmetric and can consequently be expressed as a point in $\mathbb{R}^M$, $M=d(d+1)/2$. The design matrices leveraged in this paper belong to a convex hull of a subset of points since they are a convex combination of $xx^\top$ for $x \in \calX$. According to Caratheodory’s theorem, any point in the convex hull of any subset of points in $\mathbb{R}^D$ can be defined as a convex combination of $D+1$ points. The optimal experimental designs in this paper can therefore be represented using only $M + 1$ points \citep{soare2015sequential}. This sparsity holds regardless of the design's form and so applies to both the homoskedastic and heteroskedastic variance settings.
\end{proof}

Finally, we connect the upper bound of Algorithm \ref{alg:Het} to the lower bound for $\delta$-PAC algorithms established in Theorem \ref{UB} for transductive best-arm identification.

\begin{proof}[Proof of Theorem \ref{UB} for transductive best-arm identification]
Assume that $\max_{z \in \calZ_\ell} | \langle z-z^*, \theta^* \rangle | \leq 2 $.  Define $\mathcal{S}_{\ell} = \{z \in \mathcal{Z}_\ell : \langle z^* - z, \theta^* \rangle \leq 4\epsilon_\ell\}$. Note that by assumption  $\mathcal{Z} = \mathcal{Z}_1 = S_1$. Lemma~\ref{lo} implies that with probability at least $1-\delta$ for $\delta \in (0,1)$, we know $\mathcal{Z}_\ell \subset \mathcal{S}_{\ell}$ for all $\ell \in \mathbb{N}$. This means that
\begin{align*}
    q(\mathcal{Y}(\mathcal{Z}_\ell)) &= \inf_{\lambda \in P_\mathcal{X}} \max_{z,z' \in \mathcal{\mathcal{Z}_\ell}} ||z-z'||^2_{(\sum_{x\in \mathcal{X}} \lambda_x \frac{xx^{\top}}{\widehat{\sigma}_x^2})^{-1}} \\
    &\leq \inf_{\lambda \in P_\mathcal{X}} \max_{z,z' \in \mathcal{S}_{\ell}} ||z-z'||^2_{(\sum_{x\in \mathcal{X}} \lambda_x \frac{xx^{\top}}{\widehat{\sigma}_x^2})^{-1}} \\
    &= q(\mathcal{S}_{\ell}).
\end{align*}
For $\ell \geq \lceil \log_2(4\Delta^{-1})\rceil$, we know that $\mathcal{S}_{\ell} = \{z^*\}$, thus the sample complexity to identify $z^*$ is upper bounded as follows.
\begin{align*}
    \sum_{\ell = 1}^{\lceil \log_2(4\Delta^{-1} )\rceil} \sum_{x\in \calX} \lceil \tau_\ell \widehat{\lambda}_{\ell, x} \rceil &\overset{(a)}{=} \sum_{\ell = 1}^{\lceil \log_2(4\Delta^{-1} )\rceil} \left ( \frac{d(d+1)}{2} + \tau_\ell \right ) \\
    &=  \sum_{\ell = 1}^{\lceil \log_2(4\Delta^{-1} )\rceil} \left ( \frac{d(d+1)}{2} +  2  \left ( \frac{3}{2} \right ) \epsilon_\ell^{-2}q(\mathcal{Y}(\mathcal{Z}_\ell)) \log(8\ell^2 |\mathcal{Z}|/\delta) \right ) \\
    &\leq \frac{d(d+1)}{2}\lceil \log_2(4\Delta^{-1} )\rceil + \sum_{\ell = 1}^{\lceil \log_2(4\Delta^{-1} )\rceil}  3 \epsilon_\ell^{-2}q(\mathcal{S}_{\ell}) \log(8\ell^2 |\mathcal{Z}|/\delta) \\
    &\leq \frac{d(d+1)}{2}\lceil \log_2(4\Delta^{-1} )\rceil + \\
    & \hspace{0.5cm} 3\log\left (\frac{8 \lceil \log_2(4\Delta^{-1})\rceil^2 |\mathcal{Z}|}{\delta} \right ) \sum_{\ell = 1}^{\lceil \log_2(4\Delta^{-1} )\rceil} 2^{2\ell} q(\mathcal{S}_{\ell}),
\end{align*}
where $(a)$ follows from Lemma \ref{suplambda}.
\newpage
We now relate this quantity to the lower bound and explain below.
With probability $1-\delta$,
\begin{align*}
    \psi_{\mathrm{BAI}}^* &=   \inf_{\lambda \in P_\mathcal{X}} \max_{z \in \mathcal{Z} \backslash \{z^*\}} \frac{||z^* - z||^2_{(\sum_{x \in \mathcal{X}} \lambda_x \frac{xx^{\top}}{\sigma_x^2})^{-1}}}{\left [ (z^* - z)^{\top} \theta^*\right ]^2} \\
    &= \inf_{\lambda \in P_\mathcal{X}} \max_{\ell \leq \lceil \log_2(4\Delta^{-1})\rceil} \max_{z \in \mathcal{S}_{\ell}} \frac{||z^* - z||^2_{(\sum_{x \in \mathcal{X}} \lambda_x \frac{xx^{\top}}{\sigma_x^2})^{-1}}}{\left [ (z^* - z)^{\top} \theta^*\right ]^2} \\
    &\geq \inf_{\lambda \in P_\mathcal{X}} \max_{\ell \leq \lceil \log_2(4\Delta^{-1})\rceil} \max_{z \in \mathcal{S}_{\ell}} \frac{||z^* - z||^2_{(\sum_{x \in \mathcal{X}} \lambda_x \frac{xx^{\top}}{\sigma_x^2})^{-1}}}{(4 \times 2^{-\ell})^2} \\
    &\overset{(a)}{\geq} \frac{1}{\lceil \log_2(4\Delta^{-1})\rceil} \inf_{\lambda \in P_\mathcal{X}} \sum_{\ell = 1}^{\lceil \log_2(4\Delta^{-1} )\rceil} \max_{z \in \mathcal{S}_{\ell}} \frac{||z^* - z||^2_{(\sum_{x \in \mathcal{X}} \lambda_x \frac{xx^{\top}}{\sigma_x^2})^{-1}}}{(4 \times 2^{-\ell})^2} \\
    &\overset{(b)}{\geq} \frac{1}{16\lceil \log_2(4\Delta^{-1})\rceil} \sum_{\ell = 1}^{\lceil \log_2(4\Delta^{-1} )\rceil} 2^{2\ell} \inf_{\lambda \in P_\mathcal{X}} \max_{z \in \mathcal{S}_{\ell}}||z^* - z||^2_{(\sum_{x \in \mathcal{X}} \lambda_x \frac{xx^{\top}}{\sigma_x^2})^{-1}}\\
    &\overset{(c)}{\geq} \frac{1}{64\lceil \log_2(4\Delta^{-1})\rceil} \sum_{\ell = 1}^{\lceil \log_2(4\Delta^{-1} )\rceil} 2^{2\ell} \inf_{\lambda \in P_\mathcal{X}} \max_{z, z' \in \mathcal{S}_{\ell}}||z' - z||^2_{(\sum_{x \in \mathcal{X}} \lambda_x \frac{xx^{\top}}{\sigma_x^2})^{-1}}\\
    &\overset{(d)}{\geq} \frac{1}{64\lceil \log_2(4\Delta^{-1})\rceil} \sum_{\ell = 1}^{\lceil \log_2(4\Delta^{-1} )\rceil} 2^{2\ell} \inf_{\lambda \in P_\mathcal{X}} \max_{z, z' \in \mathcal{S}_{\ell}} \frac{1}{2} ||z' - z||^2_{(\sum_{x \in \mathcal{X}} \lambda_x \frac{xx^{\top}}{\widehat{\sigma}_x^2})^{-1}}\\
    &\geq \frac{1}{128\lceil \log_2(4\Delta^{-1})\rceil} \sum_{\ell = 1}^{\lceil \log_2(4\Delta^{-1} )\rceil} 2^{2\ell} \inf_{\lambda \in P_\mathcal{X}} \max_{z, z' \in \mathcal{S}_{\ell}} ||z' - z||^2_{(\sum_{x \in \mathcal{X}} \lambda_x \frac{xx^{\top}}{\widehat{\sigma}_x^2})^{-1}}\\
    &= \frac{1}{128\lceil \log_2(4\Delta^{-1})\rceil} \sum_{\ell = 1}^{\lceil \log_2(4\Delta^{-1} )\rceil} 2^{2\ell} q(\mathcal{S}_{\ell})\\
\end{align*}
here $(a)$ follows from the fact that the maximum of a set of numbers is always greater than the average and $(b)$ by the fact that the minimum of a sum is greater than the sum of the minimums. We used the fact that $\max_{z, z' \in \mathcal{S}_{\ell}}||z' - z||^2_{(\sum_{x \in \mathcal{X}} \lambda_x \frac{xx^{\top}}{\sigma_x^2})^{-1}} \leq 4 \max_{z \in \mathcal{S}_{\ell}}||z^* - z||^2_{(\sum_{x \in \mathcal{X}} \lambda_x \frac{xx^{\top}}{\sigma_x^2})^{-1}}$ by the triangle inequality in $(c)$. Finally, $(d)$ follows from Lemma~\ref{rel}. 

Consequently, putting the previous pieces together we find that the sample complexity can be upper bounded as
\begin{align*}
    \sum_{\ell = 1}^{\lceil \log_2(4\Delta^{-1} )\rceil} \sum_{x\in \calX} \lceil \tau_\ell \widehat{\lambda}_{\ell, x} \rceil &\leq \frac{d(d+1)}{2}\lceil \log_2(4\Delta^{-1} )\rceil + \\
    &\hspace{0.5cm} 3\log\left (\frac{8 \lceil \log_2(4\Delta^{-1})\rceil^2 |\mathcal{Z}|}{\delta} \right ) \sum_{\ell = 1}^{\lceil \log_2(4\Delta^{-1} )\rceil} 2^{2\ell} q(\mathcal{S}_{\ell}) \\
    &\leq \frac{d(d+1)}{2}\lceil \log_2(4\Delta^{-1} )\rceil + \\
    & \hspace{0.5cm} 384 \lceil \log_2(4\Delta^{-1})\rceil \log\left (\frac{8 \lceil \log_2(4\Delta^{-1})\rceil^2 |\mathcal{Z}|}{\delta} \right ) \psi_{\mathrm{BAI}}^*, 
\end{align*}
where the dependency on $d(d+1)/2$ can reduced to $d$ by Proposition~\ref{rounding}. This proves the result.
\end{proof}

\section{Proofs of Level Set Identification}
\label{app_sec:level_set}
This appendix contains the proofs for the results on transductive linear bandit level set identification. In this problem, we are interested in identifying the set $G_\alpha = \{z \in \calZ: z^\top \theta^* > \alpha\}$, i.e., the items with mean rewards that exceed threshold $\alpha$.
We prove the lower bound on the sample complexity from Theorem~\ref{thm:lb} in App.~\ref{app_sec:level_set_lower}.
The proof of correctness and the sample complexity upper bound for Alg.~\ref{alg:Het} are presented in App.~\ref{app_sec:level_set_upper}. 
These proofs resemble the transductive linear bandit identification proofs with slight modifications accounting for the the problem structure. Recall that 
\begin{align*}
G_{\alpha}&=\{z\in \mathcal{Z}: z^{\top}\theta^{\ast} >\alpha\},\\
B_{\alpha}&=\{z\in \mathcal{Z}: z^{\top}\theta^{\ast} <\alpha\}.
\end{align*}
Moreover, we assume that $\mathcal{Z}=G_{\alpha}\cup B_{\alpha}$, which is equivalent to the condition $\{z\in \mathcal{Z}: z^{\top}\theta^{\ast} = \alpha\}=\emptyset$. Let $\Delta = \min_{z\in \mathcal{Z}}|z^{\top}\theta^{\ast}-\alpha|$ denote the minimum absolute gap for the threshold $\alpha$.

\subsection{Lower Bound}
\label{app_sec:level_set_lower}

\begin{proof}[Proof of Theorem \ref{thm:lb} for transductive level set identification]

This proof is similar to the lower bound proofs of \cite{mason2022nearly} and \cite{fiez2019sequential}. Let $\mathcal{C}$ be the set of alternatives such that for $\widetilde{\theta} \in \mathcal{C}$, $G_{\alpha}(\theta^*) \neq G_{\alpha}(\widetilde{\theta})$. This set of alternatives is decomposed by \cite{mason2022nearly}  as,
\begin{align*}
    \mathcal{C} = \left ( \cup_{z \in G_{\alpha}(\theta^*) } \{\widetilde{\theta}: z^\top \widetilde{\theta} < \alpha \} \right ) \cup \left (  \cup_{z \notin G_{\alpha}(\theta^*) } \{\widetilde{\theta}: z^\top \widetilde{\theta} > \alpha \} \right ).
\end{align*}
We now recall Lemma 1 of \cite{kaufmann2016complexity}. Under a $\delta$-PAC strategy for bandit instance $(\mathcal{X}, \calZ,  \theta^*)$, let $T_i$ denote the random variable that is the number of times arm $i$ is pulled. Defining $\sigma_x^2 = x^{\top}\Sigma^* x$ for all $x \in \calX$, $\nu_{\theta, i}$ denotes the reward distribution of the $i$-th arm of $\calX$, i.e., $\nu_{\theta,i} = \mathcal{N}(x_i^{\top} \theta, \sigma_i^2)$. 
For any $\ttheta \in \mathcal{C}$, we know from \cite{kaufmann2016complexity} that
\begin{equation*}
\sum_{i=1}^K \mathbb{E}(T_i)KL(\nu_{\theta^*, i} || \nu_{\ttheta, i}) \geq  \log(1/2.4\delta).
\end{equation*}
Following the steps of \cite{fiez2019sequential}, we find that
\begin{align}
    \sum_{i=1}^K \mathbb{E}(T_i) \geq \log(1/2.4\delta) \min_{\lambda \in P_\mathcal{X}} \max_{\ttheta \in \mathcal{C}} \frac{1}{\sum_{i=1}^K \lambda_i KL(\nu_{\theta^*, i} || \nu_{\ttheta, i})}.
    \label{KL2}
\end{align}
For $\lambda \in P_\mathcal{X}$, define $A(\lambda) := \sum_{i=1}^K \lambda_i \frac{x_ix_i^{\top}}{\sigma_i^2}$. For $\epsilon > 0$ and $z \in  G_{\alpha}(\theta^*)$, define 

\begin{equation*}
    \theta_z(\epsilon, \lambda) := \theta^* -\frac{ ({\theta^*}^\top z - \alpha + \epsilon) A(\lambda)^{-1} z}{\| z \|^2_{A(\lambda)^{-1}}},
\end{equation*}

where $z^\top \theta_z(\epsilon, \lambda) = \alpha - \epsilon$, implying that $\theta_z(\epsilon, \lambda) \in \mathcal{C}$. For $z \in  G^c_{\alpha}(\theta^*)$, define 
\begin{equation*}
    \theta_z(\epsilon, \lambda) := \theta^* -\frac{ ({\theta^*}^\top z - \alpha - \epsilon) A(\lambda)^{-1} z}{\| z \|^2_{A(\lambda)^{-1}}},
\end{equation*}
where $z^\top \theta_z(\epsilon, \lambda) = \alpha + \epsilon$, implying that $\theta_z(\epsilon, \lambda) \in \mathcal{C}$.

Defining $\epsilon_z = \epsilon$ if $z \in  G_{\alpha}(\theta^*)$ and $\epsilon_z = -\epsilon$  if  $z \in  G^c_{\alpha}(\theta^*)$, we find that the KL divergence between $\nu_{\theta, i }$ and $\nu_{\theta_z(\epsilon, \lambda), i}$ is given by:
\begin{align*}
    KL(\nu_{\theta, i } || \nu_{\theta_z(\epsilon, \lambda), i}) &= \frac{\left (x_i^{\top}\left [\theta^* - \theta_z(\epsilon, \lambda)\right ] \right )^2}{2\sigma_i^2} \\
    &= z^{\top} A(\lambda)^{-1} \frac{(z^{\top} \theta^* - \alpha  + \epsilon_z)^2\frac{x_i x_i^{\top}}{2\sigma_i^2}}{(z^{\top}A(\lambda)^{-1}z)^2}A(\lambda)^{-1}z.
\end{align*}
Returning to Eq.~\ref{KL2},
\begin{align*}
    \sum_{i=1}^K \mathbb{E}(T_i) &\geq \log (1/2.4\delta) \min_{\lambda \in P_\mathcal{X}} \max_{\ttheta \in \mathcal{C}} \frac{1}{\sum_{i=1}^K \lambda_i KL(\nu_{\theta^*, i} || \nu_{\ttheta,i})} \\
    &\geq \log (1/2.4\delta) \min_{\lambda \in P_\mathcal{X}} \max_{z \in \calZ} \frac{1}{\sum_{i=1}^K \lambda_i KL(\nu_{\theta^*,i} || \nu_{\theta_z(\epsilon_z, \lambda) , i} )} \\
    &\geq \log (1/2.4\delta) \min_{\lambda \in P_\mathcal{X}} \max_{z \in \calZ} \frac{ (z^{\top}A(\lambda)^{-1}z)^2 }{\sum_{i=1}^K (z^{\top} \theta^* - \alpha + \epsilon_z)^2 z^{\top} A(\lambda)^{-1} \lambda_i \frac{x_i x_i^{\top}}{2\sigma_i^2} A(\lambda)^{-1}z} \\
    &\overset{(a)}{=} 2\log (1/2.4\delta) \min_{\lambda \in P_\mathcal{X}} \max_{z \in \calZ} \frac{ (z^{\top}A(\lambda)^{-1}z)^2 }{(z^{\top} \theta^* - \alpha + \epsilon_z)^2 z^{\top} A(\lambda)^{-1} (\sum_{i=1}^K \lambda_i  \frac{x_i x_i^{\top}}{\sigma_i^2}) A(\lambda)^{-1}z} \\
    &= 2\log (1/2.4\delta) \min_{\lambda \in P_\mathcal{X}} \max_{z \in \calZ} \frac{z^{\top}A(\lambda)^{-1}z }{ (z^{\top} \theta^* - \alpha + \epsilon_z)^2 },
\end{align*}
where $(a)$ uses the fact that $\sum_{i=1}^K \lambda_i  \frac{x_i x_i^{\top}}{\sigma_i^2} = A(\lambda)$. Letting $\epsilon_z \to 0$ establishes the result.
\end{proof}

\subsection{Upper Bound}
\label{app_sec:level_set_upper}
We now prove the upper bound on the sample complexity of Alg.~\ref{alg:Het} for level set estimation. This proof follows similarly to the upper bound on the sample complexity shown for Alg.~\ref{alg:Het} for best-arm identification. Note that in each round $\ell\in \mathbb{N}$, Alg.~\ref{alg:Het} maintains a set $\mathcal{Z}_{\ell}$ of undecided items, a set $\mathcal{G}_{\ell}$ of items estimated to have value exceeding the threshold $\alpha$, and a set $\mathcal{B}_{\ell}$ of items estimated to have value falling below the threshold $\alpha$.

\subsubsection{Proof of Correctness}
To begin, we show that that for all rounds $\ell \in \mathbb{N}$ of Alg.~\ref{alg:Het}, the error in the estimates $z^{\top}\theta^{\ast}$ for all $z\in \mathcal{Z}$ are bounded by $\epsilon_{\ell}$ with probability at least $1-\delta$ for $\delta\in (0, 1)$.
\begin{lemma}\label{cor_level}
For all $\ell \in \mathbb{N}$ and $z\in \mathcal{Z}$, with probability at least $1-\delta$ for $\delta\in (0, 1)$,
\begin{equation*}
|\langle z, \widehat{\theta}_\ell - \theta \rangle| \leq \epsilon_\ell.
\end{equation*}
\end{lemma}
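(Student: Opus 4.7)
The plan is to mirror the proof of Lemma~\ref{cor} with the BAI-specific object $z-z^\ast$ replaced by the LS-specific object $z$, and with the $\mathcal{X}\mathcal{Y}$-design over $\mathcal{Y}(\mathcal{Z}_\ell)$ replaced by the $G$-design over $\mathcal{Z}_\ell$ that Alg.~\ref{alg:Het} actually uses in the LS branch. First, define the variance event
\begin{equation*}
\mathcal{J}=\Big\{\sigma_x^2\le\tfrac{3}{2}\widehat{\sigma}_x^2\;\forall x\in\mathcal{X}\Big\},
\end{equation*}
which, by the burn-in call to Alg.~\ref{alg_var:HetEstLin} together with Lemma~\ref{concmult} (using $C_{\Gamma,\delta}\le 1/2$), holds with probability at least $1-\delta/2$. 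Define also the target events $\mathcal{E}_{z,\ell}=\{|\langle z,\widehat{\theta}_\ell-\theta^\ast\rangle|\le\epsilon_\ell\}$ for each $z\in\mathcal{Z}_\ell$ and $\ell\in\mathbb{N}$.

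Second, I would repeat the covariance computation from Lemma~\ref{cor} verbatim: the WLS estimator with plug-in weights $\widehat{\sigma}_x^{-2}$ is unbiased, so $\mathbb{E}[z^\top(\widehat{\theta}_\ell-\theta^\ast)]=0$, and with $\widetilde{\Omega}=A_\ell^{-1}X_\ell^\top\widehat{\Upsilon}^{-1}\Upsilon\widehat{\Upsilon}^{-1}X_\ell A_\ell^{-1}$, the same diagonal-ratio argument gives, on $\mathcal{J}$,
\begin{equation*}
z^\top\widetilde{\Omega}z=\sum_{i=1}^{n_\ell}u_i^2\frac{\sigma_i^2}{\widehat{\sigma}_i^2}\le\tfrac{3}{2}\,z^\top A_\ell^{-1}z,
\end{equation*}
where $u^\top=z^\top A_\ell^{-1}X_\ell^\top\widehat{\Upsilon}^{-1/2}$. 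Then $z^\top(\widehat{\theta}_\ell-\theta^\ast)$ is mean-zero sub-Gaussian with variance proxy at most $\tfrac{3}{2}\|z\|_{A_\ell^{-1}}^2$, so Proposition~\ref{SubGausBd} yields, with probability at least $1-\delta/(4\ell^2|\mathcal{Z}|)$,
\begin{equation*}
|\langle z,\widehat{\theta}_\ell-\theta^\ast\rangle|\le\sqrt{3\,\|z\|_{A_\ell^{-1}}^2\,\log(8\ell^2|\mathcal{Z}|/\delta)}.
\end{equation*}

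Third, I would invoke the LS branch's design choice. Since $\widehat{\lambda}_\ell$ minimizes $\max_{z\in\mathcal{Z}_\ell}\|z\|^2_{(\sum_x\lambda_x xx^\top/\widehat{\sigma}_x^2)^{-1}}=q(\mathcal{Z}_\ell)$, pulling each arm $\lceil\tau_\ell\widehat{\lambda}_{\ell,x}\rceil$ times gives, for every $z\in\mathcal{Z}_\ell$,
\begin{equation*}
\|z\|_{A_\ell^{-1}}^2\le\frac{q(\mathcal{Z}_\ell)}{\tau_\ell}=\frac{\epsilon_\ell^{\,2}}{3\log(8\ell^2|\mathcal{Z}|/\delta)}.
\end{equation*}
Plugging this into the sub-Gaussian bound delivers $|\langle z,\widehat{\theta}_\ell-\theta^\ast\rangle|\le\epsilon_\ell$ on $\mathcal{J}\cap\mathcal{E}_{z,\ell}$. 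Finally, a union bound over $z\in\mathcal{Z}_\ell\subset\mathcal{Z}$ and $\ell\in\mathbb{N}$ gives total failure probability
\begin{equation*}
\sum_{\ell=1}^\infty\sum_{z\in\mathcal{Z}_\ell}\frac{\delta}{4\ell^2|\mathcal{Z}|}+\Pr(\mathcal{J}^c)\le\frac{\delta}{2}+\frac{\delta}{2}=\delta,
\end{equation*}
which establishes the lemma. There is no real obstacle beyond bookkeeping; the only minor subtlety is that the $G$-design (rather than $\mathcal{X}\mathcal{Y}$-design) controls $\|z\|_{A_\ell^{-1}}^2$ directly for $z\in\mathcal{Z}_\ell$, which is exactly what the LS confidence requirement needs, and items already sorted into $\mathcal{G}_\ell$ or $\mathcal{B}_\ell$ need not be covered by $\mathcal{E}_{z,\ell}$ in subsequent rounds.
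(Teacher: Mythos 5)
Your proposal is correct and follows exactly the route the paper takes: the paper's own proof of this lemma simply states that it ``follows identically to Lemma~\ref{cor} by replacing $z^{\ast}$ with the zero vector,'' and your write-up is precisely that adaptation, substituting the $G$-design bound $\|z\|_{A_\ell^{-1}}^2\leq q(\mathcal{Z}_\ell)/\tau_\ell$ for the $\mathcal{X}\mathcal{Y}$-design bound and reusing the event $\mathcal{J}$, the covariance comparison, and the union bound unchanged. Your constants even track slightly more cleanly than the paper's own chain of inequalities in Lemma~\ref{cor}, so there is nothing to fix.
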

\begin{proof}
This proof follows identically to Lemma~\ref{cor} by replacing $z^{\ast}$ with the zero vector.
\end{proof}

We now apply the preceding result to show that with probability $1-\delta$ for $\delta \in (0,1)$, Alg.~\ref{alg:Het} does not place items in the wrong category. 
\begin{lemma} \label{li_level} With probability at least $1-\delta$ for $\delta \in (0,1)$, for any $z \in \mathcal{Z}_\ell$, $\ell \in \mathbb{N}$, such that $z^{\top}\theta^{\ast}>\alpha$,
\begin{align*}
    \langle z, \widehat{\theta}_\ell \rangle +\epsilon_{\ell}> \alpha,
\end{align*}
and for  any $z \in \mathcal{Z}_\ell$ such that $z^{\top}\theta^{\ast}<\alpha$,
\begin{align*}
    \langle z, \widehat{\theta}_\ell \rangle - \epsilon_{\ell} < \alpha.
\end{align*}
\end{lemma}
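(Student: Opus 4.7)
The plan is to apply Lemma~\ref{cor_level} directly: the required inequalities are essentially a restatement of the two-sided error bound $|\langle z, \widehat{\theta}_\ell - \theta^\ast\rangle| \leq \epsilon_\ell$ that has already been established uniformly over $\ell \in \mathbb{N}$ and $z \in \mathcal{Z}$ with probability at least $1-\delta$. I will therefore work on the high-probability event guaranteed by Lemma~\ref{cor_level} and derive each of the two claimed inequalities by a single one-line algebraic rearrangement. No additional concentration or design analysis is needed.

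First I would condition on the event $\mathcal{E} := \{ \forall \ell \in \mathbb{N}, \forall z \in \mathcal{Z}: \ |\langle z, \widehat{\theta}_\ell - \theta^\ast\rangle| \leq \epsilon_\ell \}$, which by Lemma~\ref{cor_level} satisfies $\mathbb{P}(\mathcal{E}) \geq 1-\delta$. On $\mathcal{E}$, the two-sided bound unpacks as
\begin{equation*}
\langle z, \theta^\ast\rangle - \epsilon_\ell \ \leq \ \langle z, \widehat{\theta}_\ell\rangle \ \leq \ \langle z, \theta^\ast\rangle + \epsilon_\ell
\end{equation*}
for every $\ell$ and every $z \in \mathcal{Z}_\ell \subset \mathcal{Z}$.

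Next, for the first claim, take any $z \in \mathcal{Z}_\ell$ with $z^\top \theta^\ast > \alpha$. Adding $\epsilon_\ell$ to the lower bound above yields $\langle z, \widehat{\theta}_\ell\rangle + \epsilon_\ell \geq \langle z, \theta^\ast\rangle > \alpha$, which is exactly the desired inequality. Symmetrically, for any $z \in \mathcal{Z}_\ell$ with $z^\top \theta^\ast < \alpha$, subtracting $\epsilon_\ell$ from the upper bound yields $\langle z, \widehat{\theta}_\ell\rangle - \epsilon_\ell \leq \langle z, \theta^\ast\rangle < \alpha$, giving the second inequality. Since both inequalities follow on the same event $\mathcal{E}$, both hold simultaneously with probability at least $1-\delta$.

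There is no real obstacle here: the entire content is the uniform confidence bound of Lemma~\ref{cor_level}. The only small point worth verifying carefully is that the implication is strict (i.e.\ $>$ and $<$ rather than $\geq$ and $\leq$) and this is preserved because the $\epsilon_\ell$-correction is exactly absorbed by the known error bound, leaving the strict gap $z^\top \theta^\ast - \alpha$ (respectively $\alpha - z^\top \theta^\ast$) intact. In particular, items with $z^\top \theta^\ast = \alpha$ are excluded by the standing assumption $\mathcal{Z} = G_\alpha \cup B_\alpha$, so there is no boundary case to worry about.
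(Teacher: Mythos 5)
Your proposal is correct and follows essentially the same route as the paper's proof: both condition on the uniform high-probability event of Lemma~\ref{cor_level} and obtain each inequality by a one-line rearrangement of the two-sided bound $|\langle z, \widehat{\theta}_\ell - \theta^\ast\rangle| \leq \epsilon_\ell$. The strictness observation and the appeal to $\mathcal{Z} = G_\alpha \cup B_\alpha$ are consistent with the paper's treatment.
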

\begin{proof}
Consider $z\in G_{\alpha}$ so that $z^{\top}\theta^{\ast}>\alpha$. Using Lemma~\ref{cor_level}, with probability at least $1-\delta$ for $\delta\in (0, 1)$ we have 
\begin{align*}
\langle z, \widehat{\theta}_\ell \rangle +\epsilon_{\ell}
&= \langle z, \widehat{\theta}_\ell - \theta^* \rangle +\epsilon_{\ell}+ \langle z, \theta^* \rangle \\
&> \langle z, \widehat{\theta}_\ell - \theta^* \rangle +\epsilon_{\ell}+ \alpha \\
&\geq \alpha.
\end{align*}
By the procedure of Alg.~\ref{alg:Het}, this guarantees $\mathcal{G}_{\ell+1}\subseteq G_{\alpha}$.

Now, consider $z\in B_{\alpha}$ so that $z^{\top}\theta^{\ast}<\alpha$. Using Lemma~\ref{cor_level}, with probability at least $1-\delta$ for $\delta\in (0, 1)$ we have 
\begin{align*}
\langle z, \widehat{\theta}_\ell \rangle -\epsilon_{\ell}
&= \langle z, \widehat{\theta}_\ell - \theta^* \rangle -\epsilon_{\ell}+ \langle z, \theta^* \rangle \\
&< \langle z, \widehat{\theta}_\ell - \theta^* \rangle - \epsilon_{\ell}+ \alpha \\
&\leq \alpha.
\end{align*}
By the procedure of Alg.~\ref{alg:Het}, this guarantees $\mathcal{B}_{\ell+1}\subseteq B_{\alpha}$.
\end{proof}

We now show that any $z\in \mathcal{Z}$ such that $|z^{\top}\theta^{\ast}-\alpha|> 2\epsilon_{\ell}$  is removed from the active set in round $\ell \in \mathbb{N}$ of Alg.~\ref{alg:Het} with probability at least $1-\delta$ for $\delta\in (0, 1)$. Since $\epsilon_{\ell} \to 0$, this proves that we will eventually identify $G_{\alpha}$ correctly.
\begin{lemma} \label{lo_level}
With probability at least $1-\delta$ for $\delta\in (0, 1)$, for any $z \in \mathcal{Z}_\ell$, $\ell \in \mathbb{N}$, such that $|\langle z, \theta^{\ast} \rangle -\alpha|> 2\epsilon_\ell$,
\begin{equation*}
 |\langle z, \widehat{\theta}_\ell  \rangle - \alpha | > \epsilon_\ell.
\end{equation*}    
\end{lemma}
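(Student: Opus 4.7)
The plan is to proceed exactly in parallel with the best-arm analogue (Lemma~\ref{lo}), exploiting the high-probability estimation bound from Lemma~\ref{cor_level}, namely $|\langle z, \widehat{\theta}_\ell - \theta^*\rangle|\leq \epsilon_\ell$ for all $z\in\mathcal{Z}$ and all $\ell\in\mathbb{N}$ simultaneously, which holds on an event of probability at least $1-\delta$. On that event, the conclusion will follow from a one-line triangle inequality after splitting into the two sign cases of $\langle z,\theta^*\rangle-\alpha$.

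First I would condition on the good event from Lemma~\ref{cor_level}. Fix an $\ell\in\mathbb{N}$ and $z\in\mathcal{Z}_\ell$ with $|\langle z,\theta^*\rangle-\alpha|>2\epsilon_\ell$. In the case $\langle z,\theta^*\rangle-\alpha>2\epsilon_\ell$, write
\begin{equation*}
\langle z,\widehat{\theta}_\ell\rangle-\alpha \;=\; \langle z,\widehat{\theta}_\ell-\theta^*\rangle + \bigl(\langle z,\theta^*\rangle-\alpha\bigr) \;>\; -\epsilon_\ell + 2\epsilon_\ell \;=\; \epsilon_\ell,
\end{equation*}
so $|\langle z,\widehat{\theta}_\ell\rangle-\alpha|>\epsilon_\ell$. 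Symmetrically, in the case $\langle z,\theta^*\rangle-\alpha<-2\epsilon_\ell$,
\begin{equation*}
\langle z,\widehat{\theta}_\ell\rangle-\alpha \;=\; \langle z,\widehat{\theta}_\ell-\theta^*\rangle + \bigl(\langle z,\theta^*\rangle-\alpha\bigr) \;<\; \epsilon_\ell - 2\epsilon_\ell \;=\; -\epsilon_\ell,
\end{equation*}
again giving $|\langle z,\widehat{\theta}_\ell\rangle-\alpha|>\epsilon_\ell$. Both cases are uniform in $\ell$ and $z$ because the controlling event from Lemma~\ref{cor_level} is already uniform over all rounds and items.

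There is essentially no obstacle here: the entire content of the claim is a deterministic consequence of the additive error bound established in Lemma~\ref{cor_level}, with the probabilistic burden already absorbed there (recall that the design in round $\ell$ uses $\tau_\ell=3\epsilon_\ell^{-2}q(\mathcal{Z}_\ell)\log(8\ell^2|\mathcal{Z}|/\delta)$ precisely so that the per-direction error collapses to $\epsilon_\ell$ after accounting for the multiplicative slack from using $\widehat{\sigma}_x^2$ in place of $\sigma_x^2$, via Lemma~\ref{funcbound}). Consequently the proof reduces to the two-case triangle inequality above, and the $\delta$-probability guarantee is inherited directly from Lemma~\ref{cor_level} without any additional union bound.
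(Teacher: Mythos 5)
Your proof is correct and follows essentially the same route as the paper's: condition on the uniform high-probability event from Lemma~\ref{cor_level}, split into the two sign cases of $\langle z,\theta^*\rangle-\alpha$, and apply the additive error bound $|\langle z,\widehat{\theta}_\ell-\theta^*\rangle|\leq\epsilon_\ell$ in each case. No further comment is needed.
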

\begin{proof}
Let us begin by considering any $z \in \mathcal{Z}_\ell$ such that $\langle z, \theta^{\ast} \rangle -\alpha> 2\epsilon_\ell$. Using Lemma~\ref{cor_level}, with probability at least $1-\delta$ for $\delta\in (0, 1)$, 
\begin{align*}
 \langle z, \widehat{\theta}_\ell  \rangle - \epsilon_\ell 
    &= \langle z, \widehat{\theta}_\ell - \theta^* \rangle + \langle z, \theta^* \rangle -\epsilon_\ell \\
    &\geq -\epsilon_\ell + \langle z, \theta^* \rangle -\epsilon_\ell \\
    &> \alpha.
\end{align*}
Now, consider any $z \in \mathcal{Z}_\ell$ such that $\langle z, \theta^{\ast} \rangle -\alpha< -2\epsilon_\ell$. Using Lemma~\ref{cor_level}, with probability at least $1-\delta$ for $\delta\in (0, 1)$, 
\begin{align*}
 \langle z, \widehat{\theta}_\ell  \rangle + \epsilon_\ell 
    &= \langle z, \widehat{\theta}_\ell - \theta^* \rangle + \langle z, \theta^* \rangle +\epsilon_\ell \\
    &\leq \epsilon_\ell + \langle z, \theta^* \rangle +\epsilon_\ell \\
    &< \alpha.
\end{align*}
Hence, for $z \in \mathcal{Z}_\ell$ such that $|\langle z, \theta^{\ast} \rangle -\alpha|> 2\epsilon_\ell$, we have that $ |\langle z, \widehat{\theta}_\ell  \rangle - \epsilon_\ell | > \alpha$. 
\end{proof}

By the elimination conditions of the algorithm, this guarantees that the $z \in \mathcal{Z}_\ell$ such that $|\langle z, \theta^{\ast} \rangle -\alpha|> 2\epsilon_\ell$ are not included in $\mathcal{Z}_{\ell+1}$.

\subsubsection{Proof of Sample Complexity}

\begin{proof}[Proof of Theorem]
Assume that $\max_{z \in \calZ_\ell} | \langle z, \theta^* \rangle - \alpha | \leq 2 $. Define $\mathcal{S}_{\ell} = \{z \in \mathcal{Z}_{\ell}: |\langle z, \theta^* \rangle -\alpha |\leq 4\epsilon_\ell\}$ for $\ell \in \mathbb{N}$. Note that by assumption  $\mathcal{Z} = \mathcal{S}_1 = \mathcal{Z}_1$. Lemma \ref{lo_level} implies that with probability at least $1-\delta$ for $\delta\in (0, 1)$, we know $\mathcal{Z}_\ell \subset \mathcal{S}_\ell$ for all $\ell \in \mathbb{N}$. This means
\begin{align*}
    q(\calZ_\ell) &= \inf_{\lambda \in P_\mathcal{X}} \max_{z \in \mathcal{\mathcal{Z}_\ell}} ||z||^2_{(\sum_{x\in \mathcal{X}} \lambda_x \frac{xx^{\top}}{\widehat{\sigma}_x^2})^{-1}} \\
    &\leq \inf_{\lambda \in P_\mathcal{X}} \max_{z \in \mathcal{S}_{\ell}} ||z||^2_{(\sum_{x\in \mathcal{X}} \lambda_x \frac{xx^{\top}}{\widehat{\sigma}_x^2})^{-1}} \\
    &= q(\mathcal{S}_\ell).
\end{align*}
For $\ell \geq \lceil \log_2(4\Delta^{-1})\rceil$, we know that $\mathcal{S}_{\ell} = \emptyset$, thus, the sample complexity to identify the set $G_{\alpha}$ is upper bounded as follows. 
\begin{align*}
    \sum_{\ell = 1}^{\lceil \log_2(4\Delta^{-1} )\rceil} \sum_{x\in \calX} \lceil \tau_\ell \widehat{\lambda}_{\ell, x} \rceil &\overset{(a)}{=} \sum_{\ell = 1}^{\lceil \log_2(4\Delta^{-1} )\rceil} \left ( \frac{d(d+1)}{2} + \tau_\ell \right ) \\
    &=  \sum_{\ell = 1}^{\lceil \log_2(4\Delta^{-1} )\rceil} \left ( \frac{d(d+1)}{2} +  2  \left ( \frac{3}{2} \right ) \epsilon_\ell^{-2}q(\mathcal{Z}_\ell) \log(8\ell^2 |\mathcal{Z}|/\delta) \right ) \\
    &\leq \frac{d(d+1)}{2}\lceil \log_2(4\Delta^{-1} )\rceil + \sum_{\ell = 1}^{\lceil \log_2(4\Delta^{-1} )\rceil}  3 \epsilon_\ell^{-2}q(\mathcal{S}_{\ell}) \log(8\ell^2 |\mathcal{Z}|/\delta) \\
    &\leq \frac{d(d+1)}{2}\lceil \log_2(4\Delta^{-1} )\rceil + \\
    &\hspace{0.5cm} 3\log\left (\frac{8 \lceil \log_2(4\Delta^{-1})\rceil^2 |\mathcal{Z}|}{\delta} \right ) \sum_{\ell = 1}^{\lceil \log_2(4\Delta^{-1} )\rceil} 2^{2\ell} q(\mathcal{S}_{\ell}),
\end{align*}
where $(a)$ follows from Lemma \ref{suplambda}. We now relate this quantity to the lower bound and explain below. With probability $1-\delta$ for $\delta \in (0,1)$,
\begin{align*}
    \psi_{\mathrm{LS}}^* &=   \inf_{\lambda \in P_\mathcal{X}} \max_{z\in \mathcal{Z}} \frac{||z||^2_{(\sum_{x \in \mathcal{X}} \lambda_x \frac{xx^{\top}}{\sigma_x^2})^{-1}}}{(z^{\top} \theta^{\ast}-\alpha)^2} \\
    &= \inf_{\lambda \in P_\mathcal{X}} \max_{\ell \leq \lceil \log_2(4\Delta^{-1})\rceil} \max_{z \in \mathcal{S}_{\ell}} \frac{||z||^2_{(\sum_{x \in \mathcal{X}} \lambda_x \frac{xx^{\top}}{\sigma_x^2})^{-1}}}{(z^{\top} \theta^{\ast}-\alpha)^2} \\
    &\geq \inf_{\lambda \in P_\mathcal{X}} \max_{\ell \leq \lceil \log_2(4\Delta^{-1})\rceil} \max_{z \in \mathcal{S}_{\ell}} \frac{||z||^2_{(\sum_{x \in \mathcal{X}} \lambda_x \frac{xx^{\top}}{\sigma_x^2})^{-1}}}{(4\times2^{-\ell})^2} \\
    &\overset{(a)}{\geq} \frac{1}{16\lceil \log_2(4\Delta^{-1})\rceil} \inf_{\lambda \in P_\mathcal{X}} \sum_{\ell = 1}^{\lceil \log_2(4\Delta^{-1} )\rceil}  2^{2\ell} \max_{z \in \mathcal{S}_{\ell}} ||z||^2_{(\sum_{x \in \mathcal{X}} \lambda_x \frac{xx^{\top}}{\sigma_x^2})^{-1}} \\
    &\overset{(b)}{\geq} \frac{1}{16\lceil \log_2(4\Delta^{-1})\rceil} \sum_{\ell = 1}^{\lceil \log_2(4\Delta^{-1} )\rceil} 2^{2\ell} \inf_{\lambda \in P_\mathcal{X}} \max_{z \in \mathcal{S}_{\ell}}||z||^2_{(\sum_{x \in \mathcal{X}} \lambda_x \frac{xx^{\top}}{\sigma_x^2})^{-1}}\\
    &\overset{(c)}{\geq} \frac{1}{32\lceil \log_2(4\Delta^{-1})\rceil} \sum_{\ell = 1}^{\lceil \log_2(4\Delta^{-1} )\rceil} 2^{2\ell} \inf_{\lambda \in P_\mathcal{X}} \max_{z \in \mathcal{S}_{\ell}}||z||^2_{(\sum_{x \in \mathcal{X}} \lambda_x \frac{xx^{\top}}{\widehat{\sigma}_x^2})^{-1}}\\
    &= \frac{1}{32\lceil \log_2(4\Delta^{-1})\rceil} \sum_{\ell = 1}^{\lceil \log_2(4\Delta^{-1} )\rceil} 2^{2\ell} q(\mathcal{S}_{\ell}).
\end{align*}
\end{proof}

where $(a)$ follows from the fact that the maximum of a set of numbers is always greater than the average, $(b)$ by the fact that the minimum of a sum is greater than the sum of the minimums, and $(c)$  from Lemma~\ref{rel}. Consequently, putting the previous pieces together we find that the sample complexity can be upper bounded as
\begin{align*}
    \sum_{\ell = 1}^{\lceil \log_2(4\Delta^{-1} )\rceil} \sum_{x\in \calX} \lceil \tau_\ell \widehat{\lambda}_{\ell, x} \rceil &\leq \frac{d(d+1)}{2}\lceil \log_2(4\Delta^{-1} )\rceil + \\
    & \hspace{0.5cm} 3\log\left (\frac{8 \lceil \log_2(4\Delta^{-1})\rceil^2 |\mathcal{Z}|}{\delta} \right ) \sum_{\ell = 1}^{\lceil \log_2(4\Delta^{-1} )\rceil} 2^{2\ell} q(\mathcal{S}_{\ell})) \\
    &\leq \frac{d(d+1)}{2}\lceil \log_2(4\Delta^{-1} )\rceil + \\
    &\hspace{0.5cm} 96 \lceil \log_2(4\Delta^{-1})\rceil \log\left (\frac{8 \lceil \log_2(4\Delta^{-1})\rceil^2 |\mathcal{Z}|}{\delta} \right ) \psi_{\mathrm{LS}}^*, 
\end{align*}
where the dependency on $d(d+1)/2$ can reduced to $d$ by Proposition~\ref{rounding}. This proves the result.

\section{Comparing Identification Lower Bounds}\label{subsec:sandwich}
In this section we use the general notation for the level set and best-arm identification problems developed in Section~\ref{sec:lin_bandit}. Define 
$$\rho_{\mathrm{OBJ}}^\ast := \sigma_{\max}^2 \min_{\lambda \in P_\mathcal{X}} \max_{h \in \mathcal{H}_{\mathrm{OBJ}}, q \in \mathcal{Q}_{\mathrm{OBJ}}} \frac{||h- q||^2_{(\sum_{x \in \mathcal{X}} \lambda_x  xx^{\top})^{-1}}}{((h - q)^{\top} \theta^* - b_{\mathrm{OBJ}})^2},$$
which is the lower bound for both $\mathrm{BAI}$ and $\mathrm{LS}$ by \cite{fiez2019sequential} and \cite{mason2022nearly} in the homoskedastic case when upper bounding the variance. 

\begin{lemma}\label{lem:sandwich}
    In the same setting as Theorem~\ref{thm:lb},
    \[
    \sigma_{\min}^{2}\cdot\rho_{\mathrm{OBJ}}^\ast \leq \psi_{\mathrm{OBJ}}^* \leq \sigma_{\max}^{2}\cdot\rho_{\mathrm{OBJ}}^\ast
    \]
    and both the upper and lower bounds are tight.
\end{lemma}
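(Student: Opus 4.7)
The plan is a matrix-monotonicity squeeze between the heteroskedastic information matrix $A(\lambda):=\sum_{x\in\mathcal{X}}\lambda_x\sigma_x^{-2}xx^\top$ and the unweighted one $B(\lambda):=\sum_{x\in\mathcal{X}}\lambda_x xx^\top$. Since $\sigma_{\min}^2\leq\sigma_x^2\leq\sigma_{\max}^2$ for every $x\in\mathcal{X}$, the scalar reciprocals satisfy $\sigma_{\max}^{-2}\leq\sigma_x^{-2}\leq\sigma_{\min}^{-2}$; multiplying each rank-one PSD matrix $\lambda_x xx^\top$ by these scalars and summing preserves the inequality in the Loewner order, yielding
\[
\sigma_{\max}^{-2}\,B(\lambda)\ \preceq\ A(\lambda)\ \preceq\ \sigma_{\min}^{-2}\,B(\lambda).
\]
Applying Lemma~\ref{PD2} (inversion reverses the partial order on Hermitian positive definite matrices), and noting that $B(\lambda)$ is invertible whenever $\lambda$ has full-dimensional support (which is all we need to consider since $\mathrm{span}(\mathcal{X})=\mathbb{R}^d$), this flips to $\sigma_{\min}^{2}\,B(\lambda)^{-1}\preceq A(\lambda)^{-1}\preceq\sigma_{\max}^{2}\,B(\lambda)^{-1}$.

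Next, evaluating the quadratic form at any direction $y=h-q$ with $h\in\mathcal{H}_{\mathrm{OBJ}}$ and $q\in\mathcal{Q}_{\mathrm{OBJ}}$ preserves the Loewner order, so after dividing by the strictly positive gap $((h-q)^\top\theta^*-b_{\mathrm{OBJ}})^2$ we obtain, pointwise in $(\lambda,h,q)$,
\[
\sigma_{\min}^{2}\,\frac{\|h-q\|^2_{B(\lambda)^{-1}}}{((h-q)^\top\theta^*-b_{\mathrm{OBJ}})^2}\ \leq\ \frac{\|h-q\|^2_{A(\lambda)^{-1}}}{((h-q)^\top\theta^*-b_{\mathrm{OBJ}})^2}\ \leq\ \sigma_{\max}^{2}\,\frac{\|h-q\|^2_{B(\lambda)^{-1}}}{((h-q)^\top\theta^*-b_{\mathrm{OBJ}})^2}.
\]
Taking $\max_{h,q}$ preserves pointwise inequalities, and then $\min_\lambda$ preserves multiplicative sandwiches by the standard argument already used in the proof of Lemma~\ref{2d}: evaluating each objective at the minimizer of the other gives both directions. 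Rearranging the $\sigma_{\max}^2$ factor in the definition of $\rho^*_{\mathrm{OBJ}}$ yields the claimed sandwich between $\psi^*_{\mathrm{OBJ}}$ and $\rho^*_{\mathrm{OBJ}}$.

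For tightness, the degenerate homoskedastic case $\sigma_x^2\equiv\sigma^2$ forces $\sigma_{\min}=\sigma_{\max}$ and collapses $A(\lambda)$ to a scalar multiple of $B(\lambda)$, so both inequalities become equalities simultaneously. More informatively, one can exhibit instances on which the optimizing design concentrates on arms whose variance equals exactly $\sigma_{\min}^2$ (respectively $\sigma_{\max}^2$), so that the scalar squeeze in the matrix order is saturated on the active eigen-direction and the lower (respectively upper) inequality becomes an equality. The only subtle point in the argument is verifying that the restriction to $\lambda$ with $B(\lambda)\succ 0$ is without loss of generality, which follows from $\mathrm{span}(\mathcal{X})=\mathbb{R}^d$ together with the fact that degenerate designs only increase the objective; beyond that, the proof is mechanical with no substantive obstacle.
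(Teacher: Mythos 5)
Your argument is correct and follows essentially the same route as the paper's proof: the Loewner-order squeeze $\sigma_{\max}^{-2}B(\lambda)\preceq A(\lambda)\preceq \sigma_{\min}^{-2}B(\lambda)$, inversion of the order, evaluation of the quadratic forms, and the min--max exchange via the Lemma~\ref{2d}-style argument. The paper's tightness witnesses are exactly the two homoskedastic extremes ($\sigma_x^2\equiv\sigma_{\min}^2$ and $\sigma_x^2\equiv\sigma_{\max}^2$ for all $x\in\calX$), which is a cleaner version of the saturation instances you sketch, so no substantive difference remains.
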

\begin{proof}
    Consider semidefinite matrices $A$, $B$, and $C$ in $\mathbb{R}^{d\times d}$ such that $ A \preceq B \preceq C$. Then $C^{-1} \preceq B^{-1} \preceq A^{-1}$. Hence, for $x \in \mathbb{R}^d$, $\|x\|_{C^{-1}} \leq \|x\|_{B^{-1}} \leq \|x\|_{A^{-1}}$. Defining 
    \begin{equation*}
       A(\lambda) := \sum_{i=1}^K \lambda_i  \frac{x_i x_i^{\top}}{\sigma_i^2},
    \end{equation*}
    we note that
    \[
    \frac{1}{\sigma_{\max}^2}\sum_{i=1}^K \lambda_i  x_i x_i^{\top} \preceq A(\lambda) \preceq \frac{1}{\sigma_{\min}^2}\sum_{i=1}^K \lambda_i  x_i x_i^{\top}.
    \]
    Hence, 
    \[
    \sigma_{\min}^2\|x\|_{\left(\sum_{i=1}^K \lambda_i  x_i x_i^{\top}\right)^{-1}}^2 \leq \|x\|_{A(\lambda)^{-1}}^2 \leq \sigma_{\max}^2\|x\|_{\left(\sum_{i=1}^K \lambda_i  x_i x_i^{\top}\right)^{-1}}^2.
    \]
    Applying this to the result of Theorem~\ref{thm:lb} and invoking Lemma~\ref{2d}, we see that 
    \begin{align*}
        \sigma_{\min}^{2}\min_{\lambda \in P_\mathcal{X}} & \max_{h \in \mathcal{H}_{\mathrm{OBJ}}, q \in \mathcal{Q}_{\mathrm{OBJ}}}  \frac{||h- q||^2_{(\sum_{x \in \mathcal{X}} \lambda_x  xx^{\top})^{-1}}}{ \left [(h - q)^{\top} \theta^* - b_{\mathrm{OBJ}} \right ]^2} \\
        & \leq \min_{\lambda \in P_\mathcal{X}} \max_{h \in \mathcal{H}_{\mathrm{OBJ}}, q \in \mathcal{Q}_{\mathrm{OBJ}}} \frac{||h- q||^2_{A(\lambda)^{-1}}}{ \left [(h - q)^{\top} \theta^* - b_{\mathrm{OBJ}} \right ]^2}\\
        &\hspace{1cm}\leq \sigma_{\max}^{2}\min_{\lambda \in P_\mathcal{X}} \max_{h \in \mathcal{H}_{\mathrm{OBJ}}, q \in \mathcal{Q}_{\mathrm{OBJ}}} \frac{||h- q||^2_{(\sum_{x \in \mathcal{X}} \lambda_x  xx^{\top})^{-1}}}{ \left [(h - q)^{\top} \theta^* - b_{\mathrm{OBJ}} \right ]^2}.
    \end{align*}
    Recalling that 
    \[
    \rho_{\mathrm{OBJ}}^\ast := \sigma_{\max}^2\min_{\lambda \in P_\mathcal{X}} \max_{h \in \mathcal{H}_{\mathrm{OBJ}}, q \in \mathcal{Q}_{\mathrm{OBJ}}} \frac{||h- q||^2_{(\sum_{x \in \mathcal{X}} \lambda_x  xx^{\top})^{-1}}}{((h - q)^{\top} \theta^* - b_{\mathrm{OBJ}})^2}
    \]
    is the lower bound in both the best arm and level set problems for the homoskedastic variance algorithm in a heteroskedastic noise setting, we see that
    \[
    \kappa^{-1}\leq \psi_{\mathrm{OBJ}}^{\ast}/\rho_{\mathrm{OBJ}}^{\ast}\leq 1.
    \]
    Moreover, both the upper and lower bounds are tight by taking $ \sigma_x^2 = \sigma_{\min}^2, \ \forall x \in \calX$ (to match the lower bound) and  $\sigma_x^2 = \sigma_{\max}^2, \ \forall x \in \calX$ (to match the upper bound). 
\end{proof}

\section{Experiment Details}
\label{app_sec:experiments}
This Appendix gives more details on the experiments presented in Section~\ref{sec:experiments}.
\subsection{Oracle Designs}

We demonstrate the difference in oracle allocations referenced by Experiments 1-3 in Section~\ref{sec:experiments}. We begin with Experiment 1 in Fig.~\ref{fig:OracleExp1}, where the oracle distribution that assumes homoskedastic variance devotes more attention to the informative directions with smaller magnitude. In contrast, the heteroskedastic oracle balances evenly because each informative arm has an equivalent signal-noise ratio when accounting for variance.

Next, we analyze Experiment 2 in Fig.~\ref{fig:OracleExp2}. In this setting the oracle distribution that assumes homoskedastic variance allocates evenly between three informative arms, while the heteroskedastic oracle prioritizes the informative directions with less variance.

\begin{figure}[ht]
\centering
\begin{minipage}{.48\textwidth}
  \centering
\includegraphics[width=\linewidth]{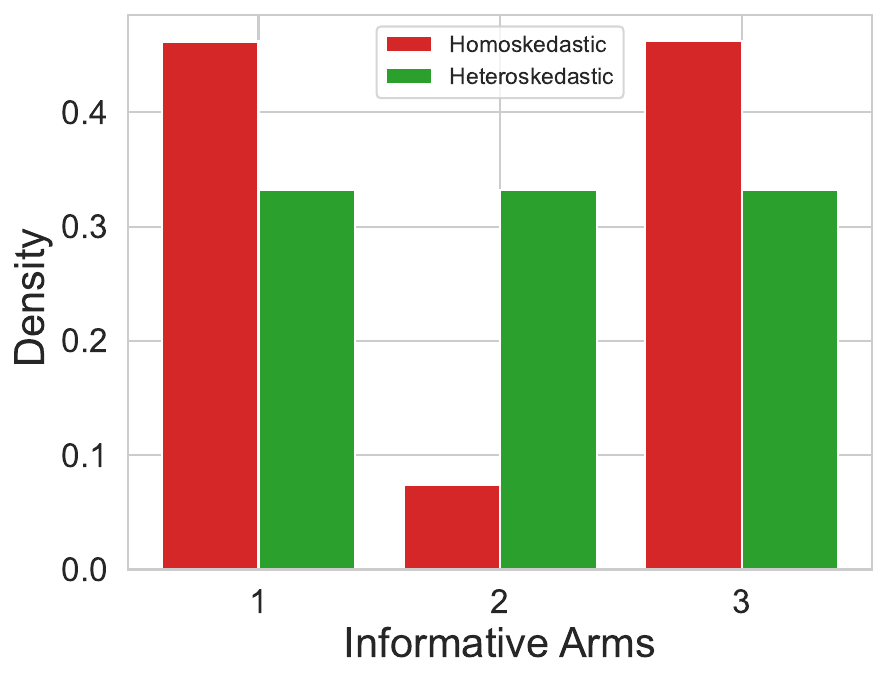}
\captionof{figure}{\small 
In Experiment 1, the Heteroskedastic Oracle balances evenly between the informative arms.
}
  \label{fig:OracleExp1}
    \end{minipage}\hfill 
\begin{minipage}{.48\textwidth}
  \centering
\includegraphics[width=\linewidth]{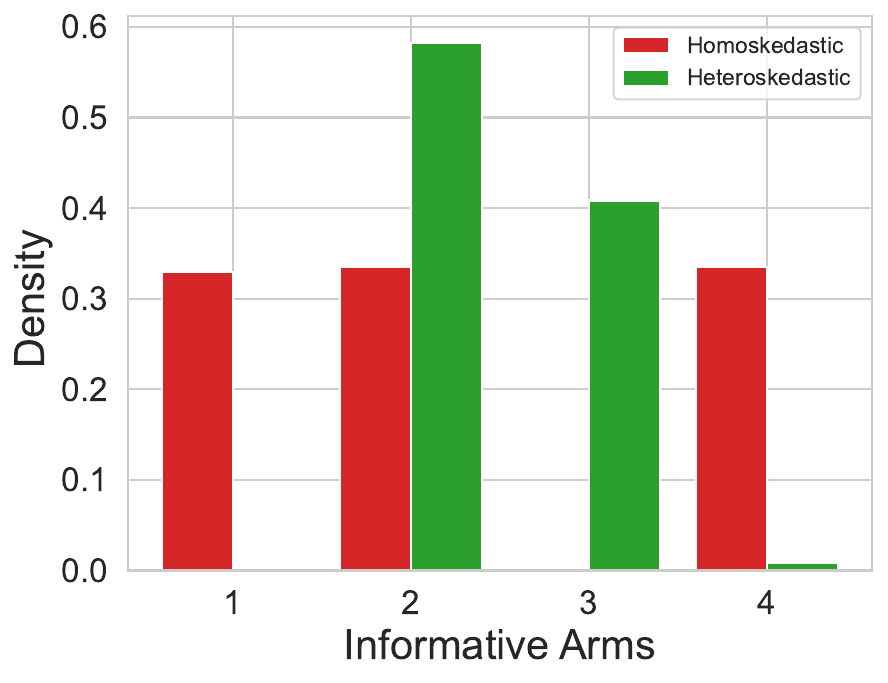}
\captionof{figure}{\small 
In Experiment 2, the Heteroskedastic Oracle prioritizes informative arms with less variance.}  
  \label{fig:OracleExp2}
  \end{minipage}
\end{figure}

Lastly, we examine the oracle distributions for the multivariate testing experiment in Fig.~\ref{fig:OracleExp3}. Note that the first four arm have higher variance than the last four arms. We can see in Fig.~\ref{fig:OracleExp3} 
that the Heteroskedastic Oracle prioritizes the second group of arms because these will require more samples to identify effect sizes.

\begin{figure}[ht]
  \centering
\includegraphics[width=0.7\textwidth]{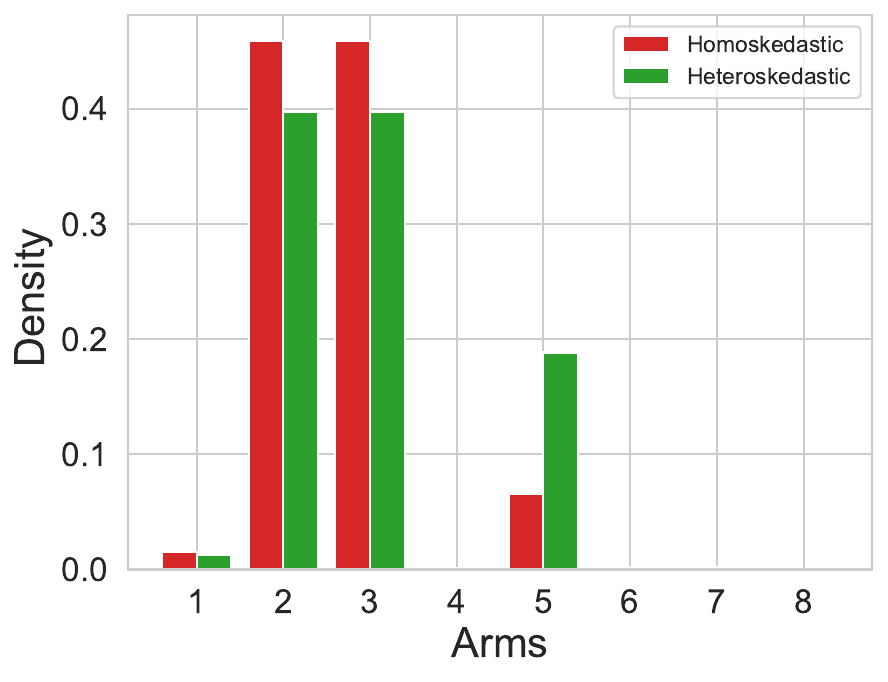}

\caption{\small 
In the multivariate testing experiment, the Heteroskedastic Oracle prioritizes the arms that have higher variance, i.e., the arms that include the second variation in the first dimension.}
  \label{fig:OracleExp3}
\end{figure}

\subsection{Assuming Homosekedastic Noise}

If we assume that there is homoskedastic noise in a heteroskedastic setting, then we need to upper-bound the noise with $\sigma_{\max}^2$ in the sample size calculation of Alg.~\ref{alg:Het} in order to maintain correctness. In Section~\ref{sec:experiments}, we compare \texttt{H-RAGE} to \texttt{RAGE}; the latter uses the $\sigma_{\max}^2$ upper-bound and is described in Alg.~\ref{alg:RAGE}.

\begin{algorithm}[ht]
\SetAlgoLined
\DontPrintSemicolon
\KwResult{Find $z^* := \arg \max_{z \in \mathcal{Z}} z^{\top}\theta^*$ for \texttt{BAI} or  $G_{\alpha} := \{z \in \mathcal{Z}: z^{\top}\theta^*>\alpha \}$ for \texttt{LS} }
\textbf{Input:} $\mathcal{X} \in \mathbb{R}^d$, $\mathcal{Z}\in \mathbb{R}^d$, confidence $\delta \in (0,1)$, \texttt{OBJ} $\in \{$\texttt{BAI},\texttt{LS}$\}$, threshold $\alpha\in \mathbb{R}$\;

\textbf{Initialize:} $\ell \leftarrow 1$, $\mathcal{Z}_1 \leftarrow \mathcal{Z}$, $\mathcal{G}_1 \leftarrow \emptyset $, $\mathcal{B}_1 \leftarrow \emptyset $  \;
\While{($|\mathcal{Z}_\ell| > 1$ and \texttt{OBJ}=\texttt{BAI}) or ($|\mathcal{Z}_\ell| > 0$ and \texttt{OBJ}=\texttt{LS}) \do}{
    Let $\widehat{\lambda}_\ell \in P_\mathcal{X}$ be a minimizer of $q(\lambda, \mathcal{Y}(\mathcal{Z}_\ell))$ if \texttt{OBJ}=\texttt{BAI} and $q(\lambda, \mathcal{Z}_\ell)$ if \texttt{OBJ}=\texttt{LS}  where
        \begin{equation*}
   \textstyle q(\mathcal{V}) =  \inf_{\lambda \in P_\mathcal{X}} q(\lambda; \mathcal{V}) = \inf_{\lambda \in P_\mathcal{X}} \max_{z \in \mathcal{V}} ||z||^2_{(\sum_{x\in \mathcal{X}} \lambda_x xx^{\top})^{-1}} 
    \end{equation*}
    
    Set $\epsilon_\ell = 2^{-\ell}$, $\tau_\ell = 2\epsilon_\ell^{-2}\sigma_{\max}^2q(\mathcal{Z}_\ell) \log(8\ell^2 |\mathcal{Z}|/\delta)$ 
    \;
    Pull arm $x \in \mathcal{X}$ exactly $\lceil \tau_\ell \widehat{\lambda}_{\ell, x} \rceil$ times for  $n_{\ell}$ samples and collect $\{x_{\ell,i}, y_{\ell,i}\}_{i=1}^{n_\ell}$\;

    Define $A_\ell \coloneqq \sum_{i=1}^{n_{\ell}} x_{\ell,i} x_{\ell,i}^{\top}, \ b_\ell = \sum_{i=1}^{n_\ell} x_{\ell,i} y_{\ell, i}$
    and construct $\widehat{\theta}_\ell = A_\ell^{-1}b_\ell$\;
    \eIf{\textsf{\texttt{OBJ} is \texttt{BAI}}}{
    $\mathcal{Z}_{\ell+1} \leftarrow \mathcal{Z}_\ell \backslash \{z \in \mathcal{Z}_\ell : \max_{z' \in \mathcal{Z}_\ell} \langle z' - z, \widehat{\theta}_\ell \rangle > \epsilon_\ell\}$\;}
    {

    $\mathcal{G}_{\ell+1} \leftarrow \mathcal{G}_\ell \cup \{z \in \mathcal{Z}_\ell : \langle z, \widehat{\theta}_\ell \rangle - \epsilon_\ell>\alpha\}$\;

    $\mathcal{B}_{\ell+1} \leftarrow \mathcal{B}_\ell \cup \{z \in \mathcal{Z}_\ell : \langle z, \widehat{\theta}_\ell \rangle +\epsilon_\ell<\alpha$\}\;

    $\mathcal{Z}_{\ell+1} \leftarrow \mathcal{Z}_\ell \backslash \{\{\mathcal{G}_\ell\setminus \mathcal{G}_{\ell+1}\}\cup \{\mathcal{B}_\ell\setminus \mathcal{B}_{\ell+1}\} \}$\;}
    
    $\ell \leftarrow \ell + 1$
}
\textbf{Output:} $\mathcal{Z}_{\ell}$ for \texttt{BAI} or $\mathcal{G}_{\ell}$ for \texttt{LS}
\caption{(\texttt{RAGE}) Randomized Adaptive Gap Elimination}\label{alg:RAGE}
\end{algorithm}

\end{document}